\theoremstyle{plain}
\theoremstyle{definition}
\newtheorem{theorem}{Theorem}[section]
\newtheorem{lemma}[theorem]{Lemma}
\newtheorem{construction}[theorem]{Construction}
\newtheorem{corollary}[theorem]{Corollary}
\newtheorem{definition}[theorem]{Definition}
\newtheorem{example}[theorem]{Example}
\newtheorem{counterexample}[theorem]{Counterexample}
\newtheorem{problem}[theorem]{Problem}
\newtheorem{note}[theorem]{Note}
\newtheorem{jadval}[theorem]{Table}
\newtheorem{convention}[theorem]{Convention}
\theoremstyle{remark}
\newtheorem*{remark}{Remark}
\numberwithin{equation}{section}
\newcommand{\SP}{\: \: \: \: \:}
\title{On special subgroups of fundamental group}
\author[F. Ayatollah Zadeh Shirazi, F. Ebrahimifar, M. A. Mahmoodi]{Fatemah Ayatollah Zadeh Shirazi, \\ Fatemeh Ebrahimifar, Mohammad Ali Mahmoodi}
\begin{document}
%%%%%%%%%%%%%%%%%%%%%%%%%%%%%%%%%%%% abstract
\begin{abstract}
Suppose $\alpha$ is a nonzero cardinal number,
$\mathcal I$ is an ideal on
arc connected topological space $X$, and
${\mathfrak P}_{\mathcal I}^\alpha(X)$ is the subgroup of $\pi_1(X)$
(the first fundamental group of $X$) generated by homotopy classes of
$\alpha\frac{\mathcal I}{}$loops.
The main aim of this text is to study ${\mathfrak P}_{\mathcal I}^\alpha(X)$s
and compare them.
Most interest is in $\alpha\in\{\omega,c\}$ and $\mathcal
I\in\{\mathcal P_{fin}(X),\{\varnothing\}\}$, where $\mathcal
P_{fin}(X)$ denotes the collection of all finite subsets of $X$.
We denote  ${\mathfrak P}_{\{\varnothing\}}^\alpha(X)$ with
 ${\mathfrak P}^\alpha(X)$. We
prove the following statements:
\\
$\bullet$ for arc connected topological spaces $X$ and $Y$
    if
    ${\mathfrak P}^\alpha(X)$ is isomorphic to ${\mathfrak P}^\alpha(Y)$
    for all infinite cardinal number $\alpha$, then
    $\pi_1(X)$ is isomorphic to $\pi_1(Y)$;
\\
$\bullet$ there are arc connected topological spaces $X$ and $Y$
    such that $\pi_1(X)$ is isomorphic to $\pi_1(Y)$ but
    ${\mathfrak P}^\omega(X)$ is not isomorphic to ${\mathfrak P}^\omega(Y)$;
\\
$\bullet$ for arc connected topological space $X$ we have
    ${\mathfrak P}^\omega(X)\subseteq{\mathfrak P}^c(X)
    \subseteq\pi_1(X)$;
\\
$\bullet$ for Hawaiian earring $\mathcal X$, the sets
    ${\mathfrak P}^\omega({\mathcal X})$, ${\mathfrak P}^c({\mathcal X})$,
    and $\pi_1({\mathcal X})$
    are pairwise distinct.
\\
So  ${\mathfrak P}^\alpha(X)$s and  ${\mathfrak P}_{\mathcal I}^\alpha(X)$s
will help us to classify the class of all arc connected topological spaces with
isomorphic fundamental groups.
\end{abstract}
\maketitle
%%%%%%%%%%%%%%%%%%%%%%%%%%%%%%%%%%%% MSC
\noindent {\small {\bf 2010 Mathematics Subject Classification:}  55Q05 \\
%14F35 \\
{\bf Keywords:}} $\alpha-$arc, $\alpha\frac{\mathcal I}{}$arc,
$\alpha\frac{\mathcal I}{}$loop, fundamental group, Hawaiian earring.
%%%%%%%%%%%%%%%%%%%%%%%%%%%%%%%%%%%%
\section{Introduction}
\noindent The main aim of algebraic topology is ``classifying the
topological spaces''.  One of the first concepts introduced in
algebraic topology is ``fundamental group''. As it has been
mentioned in \cite[page1]{M84}, fundamental groups are introduced
by Poincar$\acute{\rm e}$. In this text we consider special
subgroups of fundamental group. Explicitly we pay attention to
path homotopy classes induced by loops which are ``enough one to one''.
We have the following sections:
{\small \begin{itemize}
\item[1.] Introduction
\item[2.] What is an $\alpha\frac{\mathcal I}{}$arc?
\item[3.] New subgroups
\item[4.] A useful remark
\item[5.] Primary properties of ${\mathfrak P}_{\mathcal I}^\alpha(X)$s
\item[6.] Some preliminaries on Hawaiian earring
\item[7.] $\mathfrak P^c(\mathcal X)$ is a proper subset of $\pi_1(\mathcal X)$
\item[8.] $\mathfrak P_{{\mathcal P}_{fin}(\mathcal Y)}^c(\mathcal Y)$
    is a proper subset of $\pi_1(\mathcal Y)$
\item[9.] Main examples and counterexamples
\item[10.] Main Table
\item[11.] Two spaces having fundamental groups isomorphic to
    Hawaiian earring's fundamental group
\item[12.] A distinguished counterexample
\item[13.] A diagram and a hint
\item[14.] A strategy for future and conjecture
\item[15.] Conclusion
\end{itemize}}
\noindent Our main conventions located in section 2, although
there are conventions in other sections too. Briefly, we
introduce our new subgroups in Section 3 and obtain their primary
properties in Section 5. Sections 6, 7 and 8 contain basic lemmas
for our counterexamples in Section 9. Regarding these three
sections 7, 8, and 9 we see $\mathfrak P^\omega (\mathcal
X)\subset\mathfrak P^c (\mathcal X)\subset\pi_1(\mathcal X)$ where
$\mathcal X$ is Infinite or Hawaiian earring and ``$\subset$''
means strict inclusion; also we see $\mathfrak P_{{\mathcal
P}_{fin}(\mathcal Y)}^\omega (\mathcal Y) \subset\mathfrak
P_{{\mathcal P}_{fin}(\mathcal Y)}^c (\mathcal
Y)\subset\pi_1(\mathcal Y)$ ($\mathcal Y$ is introduced in
Section 2). However Counterexamples of Section 9 are essential
for Main Table in Section 10, which shows probable inclusion
relations between different $\mathfrak P_{\mathcal I}^\alpha(X)$
for a fix $X$ (arc connected locally compact Hausdorff
topological space), $\alpha\in\{\omega,c\}$ and ${\mathcal
I}\in\{\{\varnothing\},{\mathcal P}_{fin}(X), {\mathcal P}(X)\}$
where ${\mathcal P}(X)$ is the power set of $X$ and $\mathfrak
P_{{\mathcal P}(X)}^\alpha(X)$ is just $\pi_1(X)$ (the
fundamental group of $X$) by Section 5. We continue to discover
the properties of ``our new subgroups'' in Sections 12 and 13, as
a matter of fact in Sections 11 and 12 we see $\pi_1(\mathcal
X)\cong\pi_1(\mathcal W)$ and $\mathfrak P^\omega(\mathcal
X)\ncong\mathfrak P^\omega(\mathcal W)$ ($\mathcal W$ is
introduced in Section 2), consequently we have a diagram and two
problems in Section 13. As a matter of fact using the diagram of
Section 13 and ``Distinguished Example'' in Section 12, we try to
show ``these new subgroups'' can make meaningful subclasses of a
\textit{class of arc connected locally compact Hausdorff
topological spaces with the isomorphic fundamental groups}.
\\
Remembering all the conventions during reading the text is highly recommended.
%%%%%%%%%%%%%%%%%%%%%%%%%%%%%%%%%%%%
\begin{convention}
A topological space $X$ is an arc connected space, if for all
$a,b\in X$ with $a\neq b$ there exists a continuous one to one
map $f:[0,1]\to X$ with $f(0)=a$ and $f(1)=b$. In this text all
spaces assumed to be Hausdorff, locally compact, and arc
connected with at least two elements.
\end{convention}
%%%%%%%%%%%%%%%%%%%%%%%%%%%%%%%%%%%%
\begin{remark}
Let $X$ be an arbitrary set. We call $\mathcal I\subseteq\mathcal
P(X)$, an ideal on $X$, if:
\begin{itemize}
\item $\mathcal I\neq\varnothing$,
\item If $A,B\in\mathcal I$, then $A\cup B\in\mathcal I$,
\item If $B\subseteq A$ and $A\in\mathcal I$, then $B\in\mathcal I$.
\end{itemize}
The collection of all finite subsets of $X$,
${\mathcal P}_{fin}(X)$, is one of the most famous
ideals on $X$.
\end{remark}
%%%%%%%%%%%%%%%%%%%%%%%%%%%%%%%%%%%%
\noindent In this text ZFC+GCH (we recall that GCH or
\textit{Generalized Continuum Hypothesis} indicates that for
transfinite cardinal number $\beta$, there is not any cardinal
number $\gamma$ with $\beta<\gamma<2^\beta$, i.e.
$2^\beta=\beta^+$ \cite{Ho99}) is assumed and by ``$\subset$'' we
mean strict inclusion. Whenever $G$ is a group isomorphic to
group $H$, we write $G\cong H$. Also $G \ncong H$ means that $G$
is not isomorphic to $H$. Whenever $g\in G$ and $A\subseteq G$,
then $<A>$ denotes the subgroup of $G$ generated by $A$, denote
$<\{g\}>$ simply by $<g>$. We recall that $\omega$ is the
cardinality of ${\mathbb N}$ (the set of all natural numbers
$\{1,2,\ldots\}$) and $c$ is the cardinality of ${\mathbb R}$
(the set of all real numbers). We denote the cardinality of $A$
by $|A|$. For cardinal numbers (real numbers) $\alpha,\beta$ we
denote the maximum of $\{\alpha,\beta\}$ by $\max(\alpha,\beta)$
also we denote the minimum of $\{\alpha,\beta\}$ by
$\min(\alpha,\beta)$.
\\
In addition for $n\in\mathbb N$, consider ${\mathbb R}^n$ under Euclidean norm.
Also consider ${\mathbb
S}^1=\{(x,y)\in{\mathbb R}^2:x^2+y^2=1\}$ as a subspace
of ${\mathbb R}^2$ (or $\{e^{i\theta}:\theta\in[0,2\pi]\}$ as a
subspace of $\mathbb C$, the set of all complex numbers).
%%%%%%%%%%%%%%%%%%%%%%%%%%%%%%%%%%%%
\section{What is an $\alpha\frac{\mathcal I}{}$arc?}
\noindent The concept of $\alpha\frac{\mathcal I}{}$arc is a generalization of
$\alpha-$arc which is
originated from \cite{A07} and then in \cite{AH16}.  However a $1-$arc or briefly arc is a one to one map
$f:[0,1]\to X$.
%%%%%%%%%%%%%%%%%%%%%%%%%%%%%%%%%%%%
\begin{definition}
For nonzero cardinal number $\alpha$, and ideal ${\mathcal I}$ on
$X$, the continuous map $f:Y\to X$ is called an
$\alpha\frac{\mathcal I}{}$map if there exists $A\in\mathcal I$
such that for all $x\in X\setminus A$, $|f^{-1}(x)|<\alpha+1$ .
In particular for infinite cardinal number $\alpha$, the
continuous map $f:Y\to X$ is an $\alpha\frac{\mathcal I}{}$map if
there exists $A\in\mathcal I$ such that for all $x\in X\setminus
A$, $|f^{-1}(x)|<\alpha$.
\\
We call $\alpha\frac{\mathcal I}{}$map $f:[0,1]\to X$,
$\alpha\frac{\mathcal I}{}$arc. We call $\alpha\frac{\mathcal
I}{}$map $f:[0,1]\to X$ with $f(0)=f(1)=a$, an
$\alpha\frac{\mathcal I}{}$loop with base point $a$.
\\
We use briefly terms $\alpha-$map,
$\alpha-$arc, and $\alpha-$loop respectively instead of
$\alpha\frac{\{\varnothing\}}{}$map,  $\alpha\frac{\{\varnothing\}}{}$arc,
and $\alpha\frac{\{\varnothing\}}{}$loop.
\end{definition}
%%%%%%%%%%%%%%%%%%%%%%%%%%%%%%%%%%%%
%%%%%%%%%%%%%%%%%%%%%%%%%%%%%%%%%%%%
\noindent We want to study subgroups of $\pi_1(X)$ generated by
path homotopy equivalence classes of $\alpha-$loops and
$\alpha\frac{\mathcal I}{}$loops for nonzero cardinal number
$\alpha$ and ideal $\mathcal I$ on $X$. We pay special attention
to  $\alpha\frac{\mathcal I}{}$loops for
$\alpha\in\{\omega,c\}$ and ${\mathcal I}\in\{{\mathcal
P}_{fin}(X),\{\varnothing\}\}$. We use the following spaces and loops in
most counterexamples in this text.
%%%%%%%%%%%%%%%%%%%%%%%%%%%%%%%%%%%%
\begin{convention}\label{convention-asli}
Suppose $p\in\mathbb N$, let
\begin{eqnarray*}
    \mathcal X & := & \left\{\dfrac1ne^{2\pi i\theta}+\dfrac{i}{n}:n\in{\mathbb N},\theta\in[0,1]\right\} \\
    & (= & {\displaystyle\bigcup_{n\in\mathbb N}
        \left\{(x,y)\in\mathbb R^2:x^2+(y-\dfrac1n)^2=\dfrac1{n^2}\right\}})\SP{\rm (Hawaiian \: earring)}\\
    \mathcal Y & := & \bigcup\left\{\dfrac{1}{2^{n+1}} \mathcal X+\dfrac1n:n\in\mathbb N\right\}\cup[0,1] \\
    \mathcal Z & := & \left\{\dfrac1ke^{2\pi i(x-k-\frac14)}+\dfrac{i}{k}:k\in\{1,...,p\},x\in[0,1]\right\} \\
    \mathcal W & := & \left\{\dfrac{1}{2^{n+1}}e^{2\pi i\theta}+\dfrac{1}{n}+\dfrac{i}{2^{n+1}}
        :n\in\mathbb N,\theta\in[0,1]\right\}\cup[0,1] \\
    C_n & := & \left\{\frac1ne^{2\pi it}+\frac{i}{n}:t\in[0,1]\right\} \\
    & (= & \left\{(x,y)\in\mathbb R^2:x^2+(y-\dfrac1n)^2=\dfrac1{n^2}\right\})
    \\
    & & \SP\SP\SP\SP
    {\rm (circle \: with \: radius}\:\frac1n\:{\rm and \: center}\:\frac{i}{n}(n\in\mathbb N)) \\
    \mathcal V & := & {\displaystyle\bigcup_{n\in\mathbb N}\left\{(x,y,z)\in\mathbb R^3:y^2+(z-\dfrac1n)^2=\dfrac{1}{n^2}
    \wedge0\leq x\leq\dfrac1n\right\}}
\end{eqnarray*}
moreover define $f_{\mathcal X}:[0,1]\to\mathcal X$,
$f_{\mathcal Y}:[0,1]\to\mathcal Y$ and $f_{\mathcal Z}:[0,1]\to\mathcal Z$ with:
%%%%%%%%%%%%%%%%%%%
\[f_{\mathcal X}(x)=\left\{\begin{array}{lc}
    \dfrac1ne^{2\pi i(n(n+1)x-n-\frac14)}+\dfrac{i}{n} &
        \dfrac1{n+1}\leq x\leq\dfrac1n,n\in{\mathbb N}\: , \\
        & \\
    0 & x=0 \:,
    \end{array}\right.\]
%%%%%%%%%%%%%%%%%%%
\[f_{\mathcal Y}(x)=\left\{\begin{array}{lc}
    \dfrac{f_{\mathcal X}(4xn(n+1)-(2n+1))}{2^{n+1}}+\dfrac1n &
        \dfrac{2n+1}{4n(n+1)}\leq x\leq\dfrac1{2n},n\in{\mathbb N} \: ,\\
        & \\
    2(n+1)(2n-1)x+(2-2n) & \dfrac1{2(n+1)}\leq x\leq\dfrac{2n+1}{4n(n+1)},n\in{\mathbb N}\:, \\
    & \\
    2-2x & \dfrac12\leq x\leq1 \:,\\
    &\\
    0 & x=0\:,
    \end{array}\right.\]
%%%%%%%%%%%%%%%%%%%
and
\[f_{\mathcal Z}(x)=\frac1ke^{2\pi i(px-k-\frac14)}+\dfrac{i}{k}\SP(
\dfrac{k-1}{p}\leq x\leq\dfrac{k}{p},k\in\{1,...,p\})\:.\]
$\:$ \\
{\it Note: Consider 0 as the base point of all spaces in this convention}
\vspace{5mm}
\begin{center}
\begin{tabular}{|c|c|}
    \hline & \\
    \includegraphics[scale=0.4]{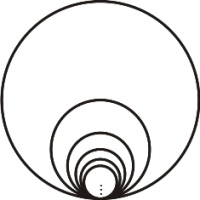} &
        \includegraphics[scale=0.4]{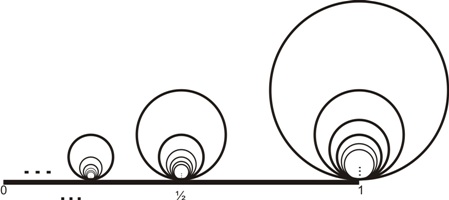} \\
    (Figure of $\mathcal X=f_{\mathcal X}[0,1]$) &
        (Figure of $\mathcal Y=f_{\mathcal Y}[0,1]$)  \\ \hline & \\
    \includegraphics[scale=0.4]{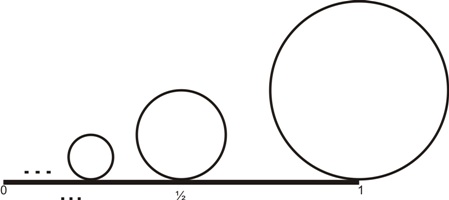} &
    \includegraphics[scale=0.4]{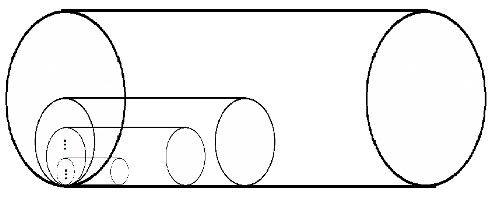} \\
        (Figure of $\mathcal W$) & (Figure of $\mathcal V$) \\ \hline
\end{tabular}
\end{center}
\vspace{5mm}
\end{convention}
%%%%%%%%%%%%%%%%%%%%%%%%%%%%%%%%%%%%
\begin{example}\label{taha10}
1) The map  $f_{\mathcal X}:[0,1]\to\mathcal X$ is an $\alpha-$loop if and only if
$\alpha>\omega$, since:
\[|f_{\mathcal X}^{-1}(x)|=\left\{\begin{array}{lc} 1 & x\neq0 \: , \\ \omega & x=0 \: .
    \end{array}\right.\]
In addition for each nonzero cardinal number $\alpha$ and ideal $\mathcal I$ on $\mathcal X$
with $\{0\}\in\mathcal I$,  $f_{\mathcal X}:[0,1]\to\mathcal X$ is an $\alpha\frac{\mathcal I}{}$loop.
\\
2) The map $f_{\mathcal Y}:[0,1]\to\mathcal Y$ is an $\alpha\frac{\mathcal I}{}$loop if and only if
``$\alpha>\omega$'' or ``$\alpha\geq2$ and
$\{\frac1n:n\in{\mathbb N}\}\in\mathcal I$'', since:
\[|f_{\mathcal Y}^{-1}(x)|=\left\{\begin{array}{lc}
    \omega & x\in\{\frac1n:n\in{\mathbb N}\} \: , \\
    2 & {\rm otherwise}\:. \end{array}\right.\]
In particular $f_{\mathcal Y}:[0,1]\to\mathcal Y$ is an $\alpha\frac{{\mathcal P}_{fin}(Y)}{}$loop
if and only if $\alpha\geq c$.
\\
3) The map $f_{\mathcal Z}:[0,1]\to\mathcal Z$ is an $\alpha-$loop if and only if
$\alpha>p$.
In addition for all nonzero cardinal number $\alpha$ and ideal $\mathcal I$ on $\mathcal X$
with $\{0\}\in\mathcal I$,  $f_{\mathcal X}:[0,1]\to\mathcal X$ is an $\alpha\frac{\mathcal I}{}$loop.
\end{example}
%%%%%%%%%%%%%%%%%%%%%%%%%%%%%%%%%%%%
%%%%%%%%%%%%%%%%%%%%%%%%%%%%%%%%%%%%
\section{New subgroups}
\noindent In this section
we introduce $\mathfrak P^\alpha_\mathcal I(X)$ as a subgroup of
$\pi_1(X)$.
\\
We recall that
for continuous maps $f,g:[0,1]\to X$
    with $f(1)=g(0)$, we have $f*g:[0,1]\to X$ with $f*g(t)=f(2t)$ for
    $t\in[0,\frac12]$ and $f*g(t)=g(2t-1)$ for $t\in[\frac12,1]$. If
$f:[0,1]\to X$ is a continuous map, $[f]$ denotes its path homotopy equivalence
class, where loops $f,g:[0,1]\to X$ with same base point $a$ are path homotopic
(or $[f]=[g]$) if there exists continuous map
$F:[0,1]\times[0,1]\to X$ with $F(s,0)=f(s)$, $F(s,1)=g(s)$ and $F(0,s)=F(1,s)=a$ for
all $s\in[0,1]$.
\\
\textbf{In the rest of this paper simply
we use term ``\textit{homotopy}'' or ``\textit{homotopic}''
respectively instead of ``\textit{path homotopy}'' or ``\textit{path homotopic}''.}
\\
In addition for two loops
 $f,g:[0,1]\to X$ with same base point $a$, we define
 $[f]*[g]$ as $[f*g]$. The class of all homotopy equivalence
 classes of loops with base point $a$ under operation $*$
 is a group which is denoted by $\pi_1(X,a)$. Whenever $X$
 is arc connected and $a,b\in X$ we have $\pi_1(X,a)\cong\pi_1(X,b)$
 so $\pi_1(X,a)$ is denoted simply by $\pi_1(X)$.
%%%%%%%%%%%%%%%%%%%%%%%%%%%%%%%%%%%%
\begin{definition}
For nonzero cardinal number $\alpha$ and ideal $\mathcal I$ by
$\mathfrak P^\alpha_\mathcal I(X,a)$ we mean subgroup of
$\pi_1(X,a)$ generated by homotopy classes of
$\alpha\frac{\mathcal I}{}$loops with base point $a$.
\end{definition}
%%%%%%%%%%%%%%%%%%%%%%%%%%%%%%%%%%%%
\begin{theorem}\label{Narges1}
For infinite cardinal number $\alpha$ and ideal $\mathcal I$ on
$X$, if $f,g:[0,1]\to X$ are $\alpha\frac{\mathcal I}{}$arcs
    with $f(1)=g(0)$, then $f*g:[0,1]\to X$
    is an $\alpha\frac{\mathcal I}{}$arc.
Moreover
    $\overline f:[0,1]\to X$ with $\overline f(t)=f(1-t)$ is an
    $\alpha\frac{\mathcal I}{}$arc too.
\end{theorem}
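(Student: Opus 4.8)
The plan is to argue straight from the definition of an $\alpha\frac{\mathcal I}{}$map, so that both assertions reduce to counting fibres together with one elementary fact about cardinal arithmetic. Since $f*g$ and $\overline f$ both have domain $[0,1]$, it suffices to verify the $\alpha\frac{\mathcal I}{}$map condition: continuity of $f*g$ is the usual pasting-lemma consequence of the hypothesis $f(1)=g(0)$, and continuity of $\overline f=f\circ(t\mapsto 1-t)$ is immediate.

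First I would fix witnesses for the hypotheses: choose $A\in\mathcal I$ with $|f^{-1}(x)|<\alpha$ for every $x\in X\setminus A$, and $B\in\mathcal I$ with $|g^{-1}(x)|<\alpha$ for every $x\in X\setminus B$. Put $C:=A\cup B$; then $C\in\mathcal I$ because an ideal is closed under finite unions. Next I would describe the fibres of $f*g$ explicitly. Reparametrising the two halves of $[0,1]$ shows, for each $x\in X$, that $(f*g)^{-1}(x)=\tfrac12 f^{-1}(x)\cup\bigl(\tfrac12 g^{-1}(x)+\tfrac12\bigr)$ (the two half-fibres overlap only possibly at the junction point $f(1)=g(0)$), whence $|(f*g)^{-1}(x)|\le |f^{-1}(x)|+|g^{-1}(x)|$.

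The one point needing a sentence is the cardinal arithmetic: if $\kappa,\lambda$ are cardinals with $\kappa<\alpha$, $\lambda<\alpha$, and $\alpha$ is infinite, then $\kappa+\lambda<\alpha$ — indeed, if both are finite then so is the sum, hence it is $<\alpha$; otherwise $\kappa+\lambda=\max(\kappa,\lambda)<\alpha$. Applying this with $\kappa=|f^{-1}(x)|$ and $\lambda=|g^{-1}(x)|$ for any $x\in X\setminus C=(X\setminus A)\cap(X\setminus B)$ gives $|(f*g)^{-1}(x)|<\alpha$, so $C$ witnesses that $f*g$ is an $\alpha\frac{\mathcal I}{}$arc. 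For the second claim, the map $t\mapsto 1-t$ is a bijection of $[0,1]$, so $\overline f^{-1}(x)=1-f^{-1}(x)$ and $|\overline f^{-1}(x)|=|f^{-1}(x)|$; hence the very same set $A$ witnesses that $\overline f$ is an $\alpha\frac{\mathcal I}{}$arc.

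I do not expect a genuine obstacle here. The only things to watch are (i) not to forget the fibre over the junction point $f(1)=g(0)$, which is harmless because we only need the upper bound $|f^{-1}(x)|+|g^{-1}(x)|$, and (ii) to use the infiniteness of $\alpha$ in the cardinal estimate — this is exactly where the statement would fail for finite $\alpha$ under the "$<\alpha+1$" convention, which is why the theorem is stated for infinite $\alpha$ only.
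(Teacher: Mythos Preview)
Your argument is correct and follows the same route as the paper: compute $(f*g)^{-1}(x)=\tfrac12 f^{-1}(x)\cup(\tfrac12 g^{-1}(x)+\tfrac12)$ to bound the fibre by $|f^{-1}(x)|+|g^{-1}(x)|$, and observe $\overline f^{-1}(x)=1-f^{-1}(x)$. You add the details the paper leaves implicit---taking $C=A\cup B\in\mathcal I$ as the common witness and spelling out why $\kappa+\lambda<\alpha$ for infinite $\alpha$---but the approach is identical.
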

%%%%%%%%%%%%%%%%%%%%%%%%%%%%%%%%%%%%
\begin{proof}
Use the fact that for all $x\in X$,
$(f*g)^{-1}(x)=(\frac12f^{-1}(x))\cup (\frac12g^{-1}(x)+\frac12)$,
thus $|(f*g)^{-1}(x)|\leq|f^{-1}(x)|+|g^{-1}(x)|$. Also note to
the fact that for all $x\in X$ we have ${\overline f}^{-1}(x)=\{1-t:t\in f^{-1}(x)\}$,
hence $|{\overline f}^{-1}(x)|=|f^{-1}(x)|$.
\end{proof}
%%%%%%%%%%%%%%%%%%%%%%%%%%%%%%%%%%%%
\begin{theorem}\label{Narges01}
For infinite cardinal number $\alpha$, $a\in X$ and ideal
$\mathcal I$ on $X$, we have:
\begin{center}
$\mathfrak P^\alpha_\mathcal
I(X,a)=\{[f]:\:f:[0,1]\to X$ is an $\alpha\frac{\mathcal
I}{}$loop with base point $a\}$.
\end{center}
\end{theorem}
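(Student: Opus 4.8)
The plan is to show that the set $S$ consisting of all homotopy classes $[f]$ for which $f:[0,1]\to X$ is an $\alpha\frac{\mathcal I}{}$loop with base point $a$ is already a subgroup of $\pi_1(X,a)$. Since $\mathfrak P^\alpha_{\mathcal I}(X,a)$ is by definition the subgroup of $\pi_1(X,a)$ generated by exactly this set $S$, once $S$ is shown to be a subgroup we get $\mathfrak P^\alpha_{\mathcal I}(X,a)=S$ for free (the inclusion ``$\supseteq$'' of the statement being trivial in any case). So everything reduces to verifying the three subgroup axioms for $S$.

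Closure of $S$ under the group operation and under inversion is handed to us directly by Theorem~\ref{Narges1}. If $[f],[g]\in S$, so that $f$ and $g$ are $\alpha\frac{\mathcal I}{}$loops with base point $a$, then $f(1)=a=g(0)$, hence by Theorem~\ref{Narges1} the concatenation $f*g$ is again an $\alpha\frac{\mathcal I}{}$arc; since $(f*g)(0)=(f*g)(1)=a$, it is an $\alpha\frac{\mathcal I}{}$loop with base point $a$, and therefore $[f]*[g]=[f*g]\in S$. Likewise, the ``moreover'' part of Theorem~\ref{Narges1} shows that $\overline f$ is an $\alpha\frac{\mathcal I}{}$arc, and $\overline f(0)=\overline f(1)=a$, so $[f]^{-1}=[\overline f]\in S$.

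It remains to see that the identity element $[c_a]$ of $\pi_1(X,a)$, with $c_a$ the constant loop at $a$, lies in $S$; this is the one point needing a genuine (if small) argument, and it is the step I expect to be the main obstacle. One cannot simply assert that $c_a$ itself is an $\alpha\frac{\mathcal I}{}$loop, because when $\alpha=\omega$ and $\{a\}\notin\mathcal I$ it is not (its fibre over $a$ is all of $[0,1]$). Instead I will first produce \emph{some} member of $S$ and then invoke the closure properties. By the standing hypotheses, $X$ is arc connected with at least two elements, so there is an injective continuous map $\gamma:[0,1]\to X$ with $\gamma(0)=a$; for every $x\in X$ one then has $|(\gamma*\overline\gamma)^{-1}(x)|\le|\gamma^{-1}(x)|+|{\overline\gamma}^{-1}(x)|\le 2<\alpha$, using that $\alpha$ is infinite, so $\gamma*\overline\gamma$ is an $\alpha-$loop with base point $a$ --- and hence an $\alpha\frac{\mathcal I}{}$loop with base point $a$ for every ideal $\mathcal I$, taking the witnessing set to be $\varnothing\in\mathcal I$. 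Thus $S\neq\varnothing$, and choosing any $[f]\in S$ we obtain $[c_a]=[f]*[f]^{-1}\in S$ from the closure under operation and inversion established above. Hence $S$ is a subgroup of $\pi_1(X,a)$, and the theorem is proved.
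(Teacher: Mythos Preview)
Your proof is correct and follows essentially the same approach as the paper: both argue that the set $S$ is nonempty by exhibiting $\gamma*\overline\gamma$ for an injective arc $\gamma$ starting at $a$ (using that $X$ is arc connected with at least two elements), and then invoke Theorem~\ref{Narges1} to obtain closure under $*$ and inversion, whence $S$ is a subgroup and coincides with the subgroup it generates. Your discussion of why the constant loop $c_a$ need not itself lie in $S$ is a nice clarification that the paper leaves implicit.
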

%%%%%%%%%%%%%%%%%%%%%%%%%%%%%%%%%%%%
\begin{proof}
Choose $b\in X\setminus\{a\}$. There exists a continuous one to
one map $g:[0,1]\to X$ with $g(0)=a$ and $g(1)=b$. Using
Theorem~\ref{Narges1}, $g*\overline g:[0,1]\to X$ is an
$\alpha\frac{\mathcal I}{}$arc. Thus $[g*\overline
g]\in\{[f]:\:f:[0,1]\to X$ is an $\alpha\frac{\mathcal I}{}$loop
with base point $a\}$, and $\{[f]:\:f:[0,1]\to X$ is an
$\alpha\frac{\mathcal I}{}$loop with base point
$a\}\neq\varnothing$. Using Theorem~\ref{Narges1},
$\{[f]:\:f:[0,1]\to X$ is an $\alpha\frac{\mathcal I}{}$loop with
base point $a\}$ is a subgroup of $\pi_1(X,a)$ which completes
the proof.
\end{proof}
%%%%%%%%%%%%%%%%%%%%%%%%%%%%%%%%%%%%
\begin{note}\label{Narges2}
Using Theorem~\ref{Narges01} for $a\in X$ and infinite cardinal
number $\alpha$, for the loop $g:[0,1]\to X$ with base point $a$,
$[g]\in\mathfrak P^\alpha_\mathcal I(X,a)$ if and only if there
exists an $\alpha\frac{\mathcal I}{}$loop $f:[0,1]\to X$ with
base point $a$ homotopic to $g:[0,1]\to X$.
\end{note}
%%%%%%%%%%%%%%%%%%%%%%%%%%%%%%%%%%%%
\begin{theorem}\label{jaleb}
For all $a,b\in X$, ideal $\mathcal I$ on $X$
and infinite $\alpha$, $\mathfrak P^\alpha_\mathcal I(X,a)$ and
$\mathfrak P^\alpha_\mathcal I(X,b)$ are isomorphic groups.
\end{theorem}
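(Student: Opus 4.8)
The plan is to imitate the classical change-of-basepoint isomorphism for fundamental groups, taking care that the connecting path can be chosen to be an arc so that it interacts correctly with the notion of an $\alpha\frac{\mathcal I}{}$loop. If $a=b$ there is nothing to prove, so assume $a\neq b$. Since $X$ is arc connected, fix a continuous one-to-one map $\gamma:[0,1]\to X$ with $\gamma(0)=a$ and $\gamma(1)=b$. Then $|\gamma^{-1}(x)|\leq1<\alpha$ for every $x\in X$, so $\gamma$ is an $\alpha\frac{\mathcal I}{}$arc for our infinite $\alpha$ and every ideal $\mathcal I$; by Theorem~\ref{Narges1} so is $\overline\gamma$. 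Define $\varphi:\pi_1(X,a)\to\pi_1(X,b)$ by $\varphi([f])=[\overline\gamma*(f*\gamma)]$; this is the usual basepoint-change map, a group isomorphism with inverse $\psi:\pi_1(X,b)\to\pi_1(X,a)$ given by $\psi([h])=[\gamma*(h*\overline\gamma)]$.

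The main point is to check that $\varphi$ carries $\mathfrak P^\alpha_{\mathcal I}(X,a)$ into $\mathfrak P^\alpha_{\mathcal I}(X,b)$. By Theorem~\ref{Narges01} it suffices to take an $\alpha\frac{\mathcal I}{}$loop $f:[0,1]\to X$ with base point $a$ and observe that $f*\gamma$ is an $\alpha\frac{\mathcal I}{}$arc by Theorem~\ref{Narges1}, hence so is $\overline\gamma*(f*\gamma)$, again by Theorem~\ref{Narges1}; since the latter is a loop with base point $b$, Theorem~\ref{Narges01} gives $\varphi([f])\in\mathfrak P^\alpha_{\mathcal I}(X,b)$. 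As $\varphi$ is a homomorphism and, by Theorem~\ref{Narges01}, $\mathfrak P^\alpha_{\mathcal I}(X,a)$ consists exactly of such classes $[f]$, we conclude $\varphi\bigl(\mathfrak P^\alpha_{\mathcal I}(X,a)\bigr)\subseteq\mathfrak P^\alpha_{\mathcal I}(X,b)$. Interchanging the roles of $a$ and $b$ (and of $\gamma$ and $\overline\gamma$) yields symmetrically $\psi\bigl(\mathfrak P^\alpha_{\mathcal I}(X,b)\bigr)\subseteq\mathfrak P^\alpha_{\mathcal I}(X,a)$.

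Finally I would assemble these facts: the restriction of $\varphi$ to $\mathfrak P^\alpha_{\mathcal I}(X,a)$ is an injective homomorphism into $\mathfrak P^\alpha_{\mathcal I}(X,b)$, and since $\varphi\circ\psi=\mathrm{id}$ its image contains $\varphi\bigl(\psi(\mathfrak P^\alpha_{\mathcal I}(X,b))\bigr)=\mathfrak P^\alpha_{\mathcal I}(X,b)$, so it is also onto. Hence $\varphi$ restricts to an isomorphism $\mathfrak P^\alpha_{\mathcal I}(X,a)\cong\mathfrak P^\alpha_{\mathcal I}(X,b)$. I do not anticipate a real obstacle here: the only deviation from the textbook argument is that the connecting path must be taken one-to-one, which is precisely where the standing hypothesis of arc connectedness is used, and Theorem~\ref{Narges1} together with Theorem~\ref{Narges01} handles everything else.
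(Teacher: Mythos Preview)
Your proof is correct and follows essentially the same approach as the paper: both use the arc-connectedness hypothesis to pick a one-to-one path $\gamma$ between $a$ and $b$, and then invoke Theorem~\ref{Narges1} to see that conjugation by $\gamma$ sends $\alpha\frac{\mathcal I}{}$loops to $\alpha\frac{\mathcal I}{}$loops, so the standard change-of-basepoint isomorphism restricts to the subgroups in question. Your write-up is a bit more explicit (in particular you spell out the appeal to Theorem~\ref{Narges01} and the surjectivity via the inverse $\psi$), but the argument is the same.
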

%%%%%%%%%%%%%%%%%%%%%%%%%%%%%%%%%%%%
\begin{proof}
For $a\neq b$, suppose $f:[0,1]\to X$ is a continuous one to one
map (1-arc) such that $f(0)=a$ and $f(1)=b$, and $\overline
f:[0,1]\to X$ is $\overline f(t)=f(1-t)$ for all $t\in[0,1]$.
Using Theorem~\ref{Narges1}, $g:[0,1]\to X$
is an $\alpha\frac{\mathcal I}{}$arc  if and only if
$\overline f*g*f:[0,1]\to X$ is an $\alpha\frac{\mathcal I}{}$arc
too, which leads to the desired result (note: $\varphi:
{\mathfrak P}_{\mathcal I}^\alpha(X,a)\to{\mathfrak P}_{\mathcal
I}^\alpha(X,b)$, with $\varphi([g])=[ \overline f*g*f]$ is an
isomorphism).
\end{proof}
%%%%%%%%%%%%%%%%%%%%%%%%%%%%%%%%%%%%
\noindent By the following counterexample the infiniteness  of
$\alpha$ in Theorem~\ref{jaleb} is essential.
%%%%%%%%%%%%%%%%%%%%%%%%%%%%%%%%%%%%
\begin{counterexample}
Consider $X=\mathbb S^1\cup[1,2]$ as a
subspace of $\mathbb R^2$) ($X$ and {\large {\bf 9}} are
homeomorph). If $a\in \mathbb S^1$ and $b\in(1,2]$, then:
\begin{itemize}
\item[1.] $\mathfrak P^1_{\mathcal P_{fin}(X)}(X,a)=\pi_1(X,a)\cong\mathbb Z$,
\item[2.] $\mathfrak P^1_{\mathcal P_{fin}(X)}(X,b)=\{e\}$ (where $e$ is the identity of $\pi_1(X,b)$).
\end{itemize}
In particular $\mathfrak P^1_{\mathcal P_{fin}(X)}(X,a)$ and
$\mathfrak P^1_{\mathcal P_{fin}(X)}(X,b)$ are nonisomorphic
(although $X$ is linear connected).
\end{counterexample}
%%%%%%%%%%%%%%%%%%%%%%%%%%%%%%%%%%%%
\begin{proof}
(1) By definition $\mathfrak P^1_{\mathcal
P_{fin}(X)}(X,a)\subseteq\pi_1(X,a)(=\mathbb Z)$. On the other
hand $f:\mathop{[0,1]\to X}\limits_{\SP\SP\SP t\mapsto e^{2\pi it}}$ is a $1\frac{\mathcal
P_{fin}(X)}{}$arc and $\pi_1(X,a)=<[f]>\subseteq\mathfrak
P^1_{\mathcal P_{fin}(X)}(X,a)$, which completes the proof.
\\
(2) Suppose $f:[0,1]\to X$ with $f(0)=f(1)=b$ is a continuous map.
If $f\neq b$, then there exists $c\in [1,2]\setminus\{b\}$ with
$c=\inf[0,1]$. Let $s:=\min(c,b)$ and $t:=\max(c,b)$. For all
$y\in(s,t)$ we have $|f^{-1}(y)|\geq2$, and $(s,t)\notin{\mathcal
P_{fin}(X)}$ (since $(s,t)$ is infinite). Therefore $f$ is not a
$1\frac{\mathcal P_{fin}(X)}{}$loop, and the constant loop $b$ is
the unique $1\frac{\mathcal P_{fin}(X)}{}$loop with base point
$b$, thus $\mathfrak P^1_{\mathcal P_{fin}(X)}(X,b)=\{[b]\}=\{e\}$
\end{proof}
%%%%%%%%%%%%%%%%%%%%%%%%%%%%%%%%%%%%
\begin{definition}\label{salam}
Regarding Theorem~\ref{jaleb} for infinite cardinal number
$\alpha$ and ideal $\mathcal I$ on $X$, we denote $\mathfrak
P^\alpha_\mathcal I(X,a)$ simply by $\mathfrak P^\alpha_\mathcal
I(X)$ (subgroup of $\pi_1(X)$ generated by homotopy classes of
$\alpha\frac{\mathcal I}{}$loops). We denote $\mathfrak
P^\alpha_{\{\varnothing\}}(X)$ by $\mathfrak P^\alpha(X)$
(subgroup of $\pi_1(X)$ generated by homotopy classes of
$\alpha-$loops).
\\
So for infinite cardinal number $\alpha$ we have (use
Note~\ref{Narges2} and above discussion):
\begin{center}
    $\mathfrak P^\alpha_\mathcal I(X)=\{[f]:\:f:[0,1]\to X$ is an
    $\alpha\frac{\mathcal I}{}$loop$\}$,
\end{center}
and
\begin{center}
    $\mathfrak P^\alpha(X)=\{[f]:\:f:[0,1]\to X$ is an
    $\alpha-$loop$\}$.
\end{center}
\end{definition}
%%%%%%%%%%%%%%%%%%%%%%%%%%%%%%%%%%%%
\section{A useful remark}
\noindent For the remain of this text we use the following useful convention.
%%%%%%%%%%%%%%%%%%%%%%%%%%%%%%%%%%%%
\begin{convention}\label{good10}
Suppose $X$ and $Y$ are closed subspaces of $Z$
such that $X\cap Y=\{t\}$. For $f:[0,1]\to X\cup Y$ define:
\[f^X(x)=\left\{\begin{array}{lc}
    f(x) & f(x)\in X \: , \\
    t & f(x)\in Y \: .
    \end{array}\right.\]
\end{convention}
%%%%%%%%%%%%%%%%%%%%%%%%%%%%%%%%%%%%
\begin{remark}\label{useful}
Suppose $X$ and $Y$ are closed (linear connected) subspaces of $Z$
such that $X\cap Y=\{t\}$. For loops $g,h:[0,1]\to X\cup Y$ with base point $t$ we have:
\begin{itemize}
\item[A.] If $g,h:[0,1]\to X\cup Y$ are homotopic loops, then
    $g^X,h^X:[0,1]\to X$ are homotopic loops (therefore
    $g^X,h^X:[0,1]\to X\cup Y$ are homotopic too).
\item[B.] Let $g[0,1]\subseteq X$ and $h[0,1]\subseteq Y$.
    $g,h:[0,1]\to X\cup Y$ are homotopic if and only if they
    are null-homotopic.
\item[C.] Let $g[0,1]\cup h[0,1]\subseteq X$.
    $g,h:[0,1]\to X\cup Y$ are homotopic if and only if
    $g,h:[0,1]\to X$ are homotopic.
\item[D.] $\pi_1(X,t)$ and $\pi_1(Y,t)$ are subgroups of $\pi_1(X\cup Y,t)$
    and $\pi_1(X,t)\cap\pi_1(Y,t)=\{[t]\}$ where $t$
    denotes the constant arc with value $t$ (as a matter of fact
    the map
    $\mathop{\pi_1(X,t)\to\pi_1(X\cup Y,t)}\limits_{[f] \mapsto [f]\SP\SP}$
    is a group embedding).
\end{itemize}
\end{remark}
%%%%%%%%%%%%%%%%%%%%%%%%%%%%%%%%%%%%
\begin{proof}
(A) Suppose $g,h:[0,1]\to X\cup Y$ are
homotopic loops, then there exists a continuous map
$F:[0,1]\times[0,1]\to X\cup Y$ such that $F(s,0)=g(s)$,
$F(s,1)=h(s)$ and $F(0,s)=F(1,s)=t$ for all $s\in[0,1]$. Define continuous map $P:X\cup
Y\to X$ with $P(z)=z$ for $z\in X$ and $P(z)=t$ for $z\in Y$. The
map $P\circ F:[0,1]\times[0,1]\to X$ is continuous, moreover
$P\circ F(s,0)=g^X(s)$, $P\circ F(s,1)=h^X(s)$ and $P\circ F(1,s)=P\circ F(0,s)=t$ for all
$s\in[0,1]$, thus $g^X,h^X:[0,1]\to X\cup Y$ are homotopic.
\\
(B) If $g,h:[0,1]\to X\cup Y$ are homotopic, then by (A),
$g^X,h^X:[0,1]\to X\cup Y$ are homotopic. On the other hand $g^X=t$
(constant function $t$) and $h^X=h$, since $g[0,1]\subseteq X$
and $h[0,1]\subseteq Y$. Therefore $h:[0,1]\to X\cup Y$ is null homotopic
which leads to the desired result.
\end{proof}
%%%%%%%%%%%%%%%%%%%%%%%%%%%%%%%%%%%%
\section{Primary properties of ${\mathfrak P}_{\mathcal I}^\alpha(X)$s}
\noindent In this section we study primary properties of
$\mathfrak P^\alpha_\mathcal I(X)$s. It is wellknown that
$\Phi:\pi_1(X,x_0)\times\pi_1(Y,y_0)\to\pi_1(X\times Y,(x_0,y_0))$
with $\Phi([f],[g])=[(f,g)]$ is an isomorphism (for example see
\cite[Theorem 60.1]{M07}) where for $f:[0,1]\to X$ and
$g:[0,1]\to Y$ we have $(f,g):[0,1]\to X\times Y$ with
$(f,g)(t)=(f(t),g(t))$ ($t\in[0,1]$). For transfinite cardinal
numbers $\alpha,\beta$, ideal ${\mathcal I}$ on
    $X$ and ideal $\mathcal J$ on $Y$ we prove
$\Phi({\mathfrak P}_{\mathcal I}^\alpha(X,x_0)\times
{\mathfrak P}_{\mathcal J}^\beta(Y,y_0))
    \subseteq{\mathfrak P}_{{\mathcal I}
    \times{\mathcal J}}^{\max(\alpha,\beta)}(X\times
    Y,(x_0,y_0))$, hence
    ${\mathfrak P}_{\mathcal I}^\alpha(X,x_0)\times
    {\mathfrak P}_{\mathcal J}^\beta(Y,y_0)$ is isomorphic to a subgroup of
    ${\mathfrak P}_{{\mathcal I}\times{\mathcal J}}^{\max(\alpha,\beta)}(X\times
    Y,(x_0,y_0))$.
%%%%%%%%%%%%%%%%%%%%%%%%%%%%%%%%%%%%
\begin{theorem}\label{Narges3}
For topological spaces $X$ and $Y$ we have (we recall that $X$ and $Y$
are arc connected locally compact Hausdorff topological spaces with at least two
elements, moreover consider $x_0\in X$, and
$y_0\in Y$):
\\
1. For all $\alpha>c$, nonzero $\beta$ and ideal $\mathcal I$ on $X$ we have
    $\mathfrak P^\alpha_{\mathcal I}(X)=\pi_1(X)=\mathfrak P^\beta_{{\mathcal P}(X)}(X)$.
\\
2. For nonzero cardinal numbers $\alpha,\beta$, $x_0\in X$, and
    ideals $\mathcal I,\mathcal J$ on $X$ we have:
    \begin{itemize}
    \item If $\alpha\leq\beta$, then
            $\mathfrak P_\mathcal I^\alpha(X,x_0)\subseteq\mathfrak P_\mathcal I^\beta(X,x_0)$.
    \item If ${\mathcal I}\subseteq{\mathcal J}$, then
            $\mathfrak P_{\mathcal I}^\alpha(X,x_0)\subseteq\mathfrak P_{\mathcal J}^\alpha(X,x_0)$.
    \item[] Therefore for infinite $\alpha$ we have (base point is $x_0$, whenever it is necessary):
    \item If $\alpha\leq\beta$, then
        $\mathfrak P_\mathcal I^\alpha(X)\subseteq\mathfrak P_\mathcal I^\beta(X)$.
    \item If ${\mathcal I}\subseteq{\mathcal J}$, then
            $\mathfrak P_{\mathcal I}^\alpha(X)\subseteq\mathfrak P_{\mathcal J}^\alpha(X)$;
    \item $\mathfrak P_{\mathcal I\cap \mathcal J}^\alpha(X)
            \subseteq \mathfrak P_{\mathcal I}^\alpha(X)
            \cap\mathfrak P_{\mathcal   J}^\alpha(X)$.
        \end{itemize}
3. For infinite cardinal numbers $\alpha$, $\beta$ and ideals $\mathcal I$ on $X$
    and $\mathcal J$ on $Y$ we have
    \[\Phi({\mathfrak P}_{\mathcal I}^\alpha(X,x_0)\times{\mathfrak P}_{\mathcal J}^\beta(Y,y_0))
    \subseteq{\mathfrak P}_{{\mathcal I}\times{\mathcal J}}^{\max(\alpha,\beta)}(X\times
    Y,(x_0,y_0)),\]
    where ${\mathcal I}\times{\mathcal J}$ is ideal on $X\times Y$ generated by $\{A\times
    B:A\in{\mathcal I},B\in{\mathcal J}\}$ and $\Phi([f],[g])=[(f,g)]$ for
    loops $f:[0,1]\to X$ and $g:[0,1]\to Y$. Hence ${\mathfrak P}_{\mathcal I}^\alpha(X)\times{\mathfrak P}_{\mathcal J}^\beta(Y)$
    is isomorphic to a subgroup of ${\mathfrak P}_{{\mathcal I}\times{\mathcal J}}^{\max(\alpha,\beta)}(X\times
    Y)$.
\\
4. For infinite cardinal numbers $\alpha$, $\beta$, ideal $\mathcal I$
    on $X$, and isomorphism $\Phi:\pi_1(X)\times\pi_1(Y)\to\pi_1(X\times Y)$
    with $\Phi([f],[g])=[(f,g)]$, we have:
\begin{itemize}
    \item[a.] $\Phi({\mathfrak P}_{\mathcal I}^\alpha(X,x_0)\times\pi_1(Y,y_0))\subseteq
        {\mathfrak P}_{{\mathcal I}\times{\mathcal P}(Y)}^\alpha(X\times Y,(x_0,y_0)),$
    \item[b.] $\Phi({\mathfrak P}_{\mathcal I}^\alpha(X,x_0)\times{\mathfrak P}^\beta(Y,y_0))
        \subseteq{\mathfrak P}^\beta(X\times Y,(x_0,y_0))$;
    \item[c.] $\Phi({\mathfrak P}^\alpha(X,x_0)\times{\mathfrak P}^\beta(Y,y_0))
        \subseteq{\mathfrak P}^{\min(\alpha,\beta)}(X\times Y,(x_0,y_0))$.
    \end{itemize}
5. For infinite cardinal numbers $\alpha$, $\beta$, ideal $\mathcal I$
    on $X$, ideal $\mathcal J$ on $Y$, $\mathcal K:=\{A\cup B:A\in\mathcal I,B\in\mathcal J\}$,
    if $X\cap Y=\{t\}$ and $X,Y$ are (linear connected) closed subspaces of $Z$, then we have
    (note that $\mathcal K$ is an ideal on
    $X\cup Y$) (see Convention~\ref{good10} (D)):
    \begin{itemize}
    \item[a.] ${\mathfrak P}_{\mathcal I}^\alpha(X,t){\mathfrak P}_{\mathcal J}^\beta(Y,t)\subseteq
        {\mathfrak P}_{\mathcal K}^{\max(\alpha,\beta)}(X\cup Y,t)$,
    \item[b.] ${\mathfrak P}^\alpha(X,t){\mathfrak P}^\beta(Y,t)\subseteq
        {\mathfrak P}^{\max(\alpha,\beta)}(X\cup Y,t)$.
    \end{itemize}
\end{theorem}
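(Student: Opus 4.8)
We treat the five parts in turn; the common engine is that, for infinite $\alpha$, Theorem~\ref{Narges01} lets us represent each element of $\mathfrak P^{\alpha}_{\mathcal I}(X,x_0)$ by a genuine $\alpha\frac{\mathcal I}{}$loop, after which every assertion becomes a statement about cardinalities of point-fibres, exploiting $(f,g)^{-1}(z,w)=f^{-1}(z)\cap g^{-1}(w)$, the bound $|(f*g)^{-1}(z)|\le|f^{-1}(z)|+|g^{-1}(z)|$ from the proof of Theorem~\ref{Narges1}, and $|[0,1]|=c$. Homotopy invariance of $*$ and of $\Phi$ lets us pass freely to such representatives. \emph{Parts 1 and 2.} If $\alpha>c$ then any loop $f\colon[0,1]\to X$ has $|f^{-1}(x)|\le c<\alpha$ for all $x$, so it is an $\alpha-$loop; hence the classes of $\alpha-$loops already generate $\pi_1(X)$ and $\mathfrak P^{\alpha}(X)=\pi_1(X)$. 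Taking the witness set to be all of $X$ shows every loop is a $\beta\frac{\mathcal P(X)}{}$loop for each nonzero $\beta$, so $\mathfrak P^{\beta}_{\mathcal P(X)}(X)=\pi_1(X)$ as well; since $\{\varnothing\}\subseteq\mathcal I\subseteq\mathcal P(X)$, part~2 squeezes $\mathfrak P^{\alpha}_{\mathcal I}(X)$ between these. For part~2: when $\alpha\le\beta$ every $\alpha\frac{\mathcal I}{}$loop is a $\beta\frac{\mathcal I}{}$loop (the fibre bound only weakens), and when $\mathcal I\subseteq\mathcal J$ a witness in $\mathcal I$ is a witness in $\mathcal J$; passing to the subgroups generated by the corresponding classes gives the two monotonicity inclusions, and as $\mathcal I\cap\mathcal J$ is again an ideal the last inclusion is immediate from those two.

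\emph{Parts 3 and 4.} Given $[f]\in\mathfrak P^{\alpha}_{\mathcal I}(X,x_0)$ and $[g]\in\mathfrak P^{\beta}_{\mathcal J}(Y,y_0)$, use the engine to assume $f$ is an $\alpha\frac{\mathcal I}{}$loop with witness $A\in\mathcal I$ and $g$ a $\beta\frac{\mathcal J}{}$loop with witness $B\in\mathcal J$. Then $\Phi([f],[g])=[(f,g)]$ and $|(f,g)^{-1}(z,w)|=|f^{-1}(z)\cap g^{-1}(w)|$, which is $\le|f^{-1}(z)|<\alpha$ if $z\notin A$ and $\le|g^{-1}(w)|<\beta$ if $w\notin B$; hence it is $<\max(\alpha,\beta)$ off the single rectangle $A\times B\in\mathcal I\times\mathcal J$, so $(f,g)$ is a $\max(\alpha,\beta)\frac{\mathcal I\times\mathcal J}{}$loop. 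This is the displayed inclusion of part~3; and since $\Phi$ is an isomorphism, its restriction to $\mathfrak P^{\alpha}_{\mathcal I}(X)\times\mathfrak P^{\beta}_{\mathcal J}(Y)$ is injective with image inside $\mathfrak P^{\max(\alpha,\beta)}_{\mathcal I\times\mathcal J}(X\times Y)$, giving the final sentence. Part~4(a) is part~3 with $\beta=\alpha$ and $\mathcal J=\mathcal P(Y)$, using $\pi_1(Y,y_0)=\mathfrak P^{\alpha}_{\mathcal P(Y)}(Y,y_0)$ from part~1. For part~4(b) pick $g'\simeq g$ a $\beta-$loop and any representative $f$ of $[f]$; then $|(f,g')^{-1}(z,w)|\le|g'^{-1}(w)|<\beta$ for \emph{every} $(z,w)$, so $(f,g')$ is a $\beta-$loop and $\Phi([f],[g])=[(f,g')]\in\mathfrak P^{\beta}(X\times Y)$. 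For part~4(c), taking in addition $f'\simeq f$ an $\alpha-$loop gives $|(f',g')^{-1}(z,w)|\le\min(|f'^{-1}(z)|,|g'^{-1}(w)|)<\min(\alpha,\beta)$ everywhere.

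\emph{Part 5.} First check $\mathcal K=\{A\cup B:A\in\mathcal I,\ B\in\mathcal J\}$ is an ideal on $X\cup Y$: it is nonempty and plainly union-closed, and for a subset $C$ of $A\cup B$ (with $A\in\mathcal I$, $B\in\mathcal J$) one has $C\setminus Y\subseteq A$ and $C\setminus X\subseteq B$, while the only point this splitting can miss is $t$ (because $X\cap Y=\{t\}$), and if $t\in C$ then $t\in A$ or $t\in B$, so $t$ may be adjoined to $C\setminus Y$ inside $A$, or to $C\setminus X$ inside $B$; either way $C\in\mathcal K$. By Remark~\ref{useful}(D), $\pi_1(X,t)$ and $\pi_1(Y,t)$ are subgroups of $\pi_1(X\cup Y,t)$, so the product set $\mathfrak P^{\alpha}_{\mathcal I}(X,t)\,\mathfrak P^{\beta}_{\mathcal J}(Y,t)$ is meaningful there. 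Represent $[f]\in\mathfrak P^{\alpha}_{\mathcal I}(X,t)$ by an $\alpha\frac{\mathcal I}{}$loop $f\colon[0,1]\to X$ with witness $A$ and $[g]\in\mathfrak P^{\beta}_{\mathcal J}(Y,t)$ by a $\beta\frac{\mathcal J}{}$loop $g\colon[0,1]\to Y$ with witness $B$; then $[f][g]=[f*g]$ and $|(f*g)^{-1}(z)|\le|f^{-1}(z)|+|g^{-1}(z)|$. If $z\in(X\cup Y)\setminus(A\cup B)$ and $z\ne t$, then, as $f$ lands in $X$ and $g$ in $Y$, at most one of $f^{-1}(z),g^{-1}(z)$ is nonempty and it has cardinality $<\alpha$ or $<\beta$; if $z=t$ then $|f^{-1}(t)|<\alpha$ and $|g^{-1}(t)|<\beta$, and since $\alpha,\beta$ are infinite the sum is still $<\max(\alpha,\beta)$. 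Hence $f*g$ is a $\max(\alpha,\beta)\frac{\mathcal K}{}$loop with witness $A\cup B$, proving 5(a); and 5(b) is the case $\mathcal I=\mathcal J=\{\varnothing\}$, where $\mathcal K=\{\varnothing\}$ and even the fibre over $t$ is controlled because $f,g$ are then honest $\alpha-$ and $\beta-$loops.

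\emph{The main obstacle} is the combinatorics of the exceptional sets, not the topology. In part~3 the crucial observation is that $(f,g)$ already has a small fibre over $(z,w)$ as soon as \emph{one} coordinate lies outside its exceptional set, so the exceptional set of $(f,g)$ collapses to the single rectangle $A\times B$ rather than something like $(A\times Y)\cup(X\times B)$. In part~5 the only genuinely non-obvious point is downward closure of $\mathcal K$ — which breaks without $X\cap Y=\{t\}$ — together with making sure the (necessarily at least two-element) fibre over the identification point does no harm, which it does not precisely because $\alpha,\beta$ are infinite. Everything else is supplied by Theorems~\ref{Narges1} and~\ref{Narges01}, Remark~\ref{useful}, and homotopy invariance of $*$ and of $\Phi$.
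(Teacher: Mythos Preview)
Your proof is correct, and for parts 1--4 it follows essentially the same route as the paper: the paper declares (1) and (2) ``clear by definition'', proves (3) by the same off-the-rectangle fibre bound you give, deduces (4a) from (3), and gets (4b)--(4c) from the same inequality $|(f,g)^{-1}(z,w)|\le\min(|f^{-1}(z)|,|g^{-1}(w)|)$.

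Part 5 is where you diverge. The paper does not analyze $f*g$ at all; it simply strings together the monotonicity inclusions of part~2:
\[
\mathfrak P_{\mathcal I}^\alpha(X,t)\subseteq\mathfrak P_{\mathcal K}^\alpha(X,t)\subseteq\mathfrak P_{\mathcal K}^{\max(\alpha,\beta)}(X,t)\subseteq\mathfrak P_{\mathcal K}^{\max(\alpha,\beta)}(X\cup Y,t),
\]
does the same for the $Y$-factor, and concludes that the product of the two subgroups sits in $\mathfrak P_{\mathcal K}^{\max(\alpha,\beta)}(X\cup Y,t)$. This is slicker and avoids your case split on $z=t$, but it tacitly uses $\mathcal I\subseteq\mathcal K$ (via $A=A\cup\varnothing$) and silently assumes the parenthetical ``note that $\mathcal K$ is an ideal''. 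Your direct fibre analysis of $f*g$ is slightly longer but more self-contained, and you actually supply the only nontrivial check the paper omits --- downward closure of $\mathcal K$, which genuinely needs $X\cap Y=\{t\}$. Either approach is fine; yours is more explicit, the paper's is more economical.
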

%%%%%%%%%%%%%%%%%%%%%%%%%%%%%%%%%%%%
\begin{proof}
(1) and (2) are clear by definition.
\\
(3) If $f:[0,1]\to X$ is an $\alpha\frac{\mathcal I}{}$arc with
base point $x_0$ and $g:[0,1]\to Y$ is a
    $\beta\frac{\mathcal J}{}$arc with base point $y_0$,
    then there exist $A\in{\mathcal I}$
    and $B\in{\mathcal J}$
    such that for all $x\in X\setminus A$ and
    $y\in Y\setminus B$ we have $|f^{-1}(x)|<\alpha$ and
    $|g^{-1}(y)|<\beta$. For $h=(f,g):[0,1]\to X\times Y$
    with $h(t)=(f(t),g(t))$ and
    $(z,w)\in(X\times Y)\setminus(A\times B)$ we have:
    {\small
    \begin{eqnarray*}
    (z,w)\in(X\times Y)\setminus(A\times B) & \Rightarrow &
            z\in X\setminus A\vee w\in Y\setminus B \\
        & \Rightarrow & |f^{-1}(z)|<\alpha\vee|g^{-1}(w)|<\beta \\
        & \Rightarrow & |h^{-1}(z,w)|\leq\min(|f^{-1}(z)|,| g^{-1}(w)|)<\max(\alpha,\beta)
    \end{eqnarray*}}
therefore $(f,g):[0,1]\to X\times Y$ is a $\max(\alpha,\beta)\frac{{\mathcal I}
\times{\mathcal J}}{}$arc, and
\[\Phi([f],[g])=[(f,g)]\in{\mathfrak P}^{\max(\alpha,\beta)}_{{\mathcal I}
\times{\mathcal J}}(X\times Y,(x_0,y_0))\:.\]
(4) (a) is a special
    case of item (3), since $\pi_1(Y,y_0)=
    {\mathfrak P}_{{\mathcal P}(Y)}^\alpha(Y,y_0)$. \\
    For rest note that
    for all $(x,y)\in X\times Y$, continuous maps $f:[0,1]\to X$,
    and $g:[0,1]\to Y$ we have
    $(f,g)^{-1}(x,y)=f^{-1}(x)\cap g^{-1}(y)$, thus
    $|h^{-1}(x,y)|\leq\min(|f^{-1}(x)|,|g^{-1}(y)|)$.
    \begin{itemize}
    \item[(b)] If $f:[0,1]\to X$ is an $\alpha\frac{\mathcal I}{}$arc
        and $g:[0,1]\to Y$ is a $\beta-$arc, then
        for all $(x,y)\in X\times Y$ we have
        $|(f,g)^{-1}(x,y)|\leq\min(|f^{-1}(x)|,|g^{-1}(y)|)\leq|g^{-1}(y)|<\beta$.
        Therefore $(f,g):[0,1]\to X\times Y$ is a $\beta-$arc.
    \item[(c)] If $f:[0,1]\to X$ is an $\alpha-$arc and $g:[0,1]\to Y$ is
        a $\beta-$arc, then
        for all $(x,y)\in X\times Y$ we have
        $|(f,g)^{-1}(x,y)|\leq\min(|f^{-1}(x)|,|g^{-1}(y)|)<\min(\alpha,\beta)$.
        Therefore $(f,g):[0,1]\to X\times Y$ is a $\min(\alpha,\beta)-$arc.
    \end{itemize}
(5) Since ${\mathfrak P}_{\mathcal I}^\alpha(X,t)\subseteq
    {\mathfrak P}_{\mathcal K}^\alpha(X,t)\subseteq
    {\mathfrak P}_{\mathcal K}^{\max(\alpha,\beta)}(X,t)
    \subseteq{\mathfrak P}_{\mathcal K}^{\max(\alpha,\beta)}(X\cup Y,t)$
    and similarly
    ${\mathfrak P}_{\mathcal J}^\beta(Y,t)\subseteq
    {\mathfrak P}_{\mathcal K}^{\max(\alpha,\beta)}(X\cup Y,t)$.
\end{proof}
%%%%%%%%%%%%%%%%%%%%%%%%%%%%%%%%%%%%
%%%%%%%%%%%%%%%%%%%%%%%%%%%%%%%%%%%%
%%%%%%%%%%%%%%%%%%%%%%%%%%%%%%%%%%%%
\begin{example}[$\mathfrak P_{\mathcal I}^\alpha(X)$
for some well-known spaces $X$] We may find the following easy examples:
\begin{itemize}
\item[1.] It's evident that for any contractible space $X$, nonzero cardinal
    number $\alpha$ and ideal $\mathcal I$ on $X$, we have $\mathfrak
    P_{\mathcal I}^\alpha(X)=\{e\}$.
\item[2.] Let $X=\{e^{2\pi i\theta}:\theta\in[0,1]\}(={\mathbb S}^1)$. Then
    $\mathfrak P^\alpha_\mathcal I(X)=\pi_1(X)$, for all $\alpha\geq2$
    and ideal $\mathcal I$ on $X$ (since for $f:[0,1]\to{\mathbb S}^1$
    with $f(t)=e^{2\pi i t}$ we have $[f]\in{\mathfrak
    P}^\alpha({\mathbb S}^1) \subseteq{\mathfrak
    P}^\alpha_{\mathcal I}({\mathbb S}^1)\subseteq\pi_1({\mathbb S}^1)$ and
    $[f]$ is a generator of $\pi_1({\mathbb S}^1)$, thus ${\mathfrak
    P}^\alpha_{\mathcal I}({\mathbb S}^1)=\pi_1({\mathbb S}^1)\cong{\mathbb Z}$).
\item[3.] With a similar method described in item (2), for all
    $\alpha\geq2$ and ideal $\mathcal I$ on ${\mathbb
    R}^2\setminus\{0\}$ (punctured space), we have ${\mathfrak
    P}^\alpha_{\mathcal I}({\mathbb R}^2\setminus\{0\})=\pi_1({\mathbb
    R}^2\setminus\{0\})\cong{\mathbb Z}$.
\item[4.] Using (2) and a similar method described in
    Theorem~\ref{Narges3} for all $\alpha\geq2$ and ideal
    $\mathcal I$ on ${\mathbb T}={\mathbb S}^1\times{\mathbb S}^1$ (Torus) we
    have ${\mathfrak P}^\alpha_{\mathcal I}({\mathbb T})=\pi_1({\mathbb
    T})$.
\end{itemize}
\end{example}
%%%%%%%%%%%%%%%%%%%%%%%%%%%%%%%%%%%%
\section{Some preliminaries on Hawaiian earring}
\noindent In this section we bring some useful properties of
Infinite earring (Hawaiian earring) (see \cite[page 500, Exercise
5]{M07} too)
%%%%%%%%%%%%%%%%%%%%%%%%%%%%%%%%%%%%
%%%%%%%%%%%%%%%%%%%%%%%%%%%%%%%%%%%%
\begin{lemma}\label{lem}
If loop $f:[0,1]\to\mathbb S^1$ is not null-homotopic
and $f(0)=f(1)=1$,
then there exist $a,b\in[0,1]$
such that $f(a,b)=\mathbb S^1\setminus\{1\}$ and $f(a)=f(b)=1$.
\end{lemma}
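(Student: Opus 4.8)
The plan is to exploit the fact that a loop $f:[0,1]\to\mathbb S^1$ with $f(0)=f(1)=1$ which is not null-homotopic must be \emph{surjective}: if $f$ missed some point $z\in\mathbb S^1$, then $f$ would factor through the contractible space $\mathbb S^1\setminus\{z\}$ and hence be null-homotopic. In particular there is a point in $[0,1]$ mapped to the antipode $-1$ (or indeed to any chosen point other than $1$). First I would set $K:=f^{-1}(\{1\})$, a nonempty closed subset of $[0,1]$ containing $0$ and $1$, so its complement $[0,1]\setminus K$ is open and is a countable disjoint union of open intervals $(a_i,b_i)$ with $f(a_i)=f(b_i)=1$ and $f(a_i,b_i)\subseteq\mathbb S^1\setminus\{1\}$.

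Next I would argue that at least one of these component intervals $(a_i,b_i)$ has $f(a_i,b_i)=\mathbb S^1\setminus\{1\}$, i.e.\ the restriction of $f$ to that interval is surjective onto the punctured circle. The key step is that each restriction $f|_{[a_i,b_i]}$ is a loop based at $1$, and by concatenating (composing the homotopy classes of) these ``petal'' loops in order one recovers $[f]$ in $\pi_1(\mathbb S^1,1)\cong\mathbb Z$; more carefully, since $f|_{[a_i,b_i]}$ lands in $\mathbb S^1$ one can read off its winding number $n_i\in\mathbb Z$, and a compactness/continuity argument (only finitely many $n_i$ are nonzero, and they sum to the nonzero winding number of $f$) shows some $n_i\neq 0$. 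For that index, $f|_{[a_i,b_i]}$ is a loop into $\mathbb S^1$ based at $1$ with nonzero winding number, hence again it cannot omit any point of $\mathbb S^1$; since it already omits nothing and hits $1$ only at the endpoints $a_i,b_i$ (as $(a_i,b_i)\subseteq[0,1]\setminus K$), we get $f(a_i,b_i)=\mathbb S^1\setminus\{1\}$ and $f(a_i)=f(b_i)=1$, so $a:=a_i$ and $b:=b_i$ work.

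The main obstacle I anticipate is making the ``sum of winding numbers'' bookkeeping rigorous when $[0,1]\setminus K$ has infinitely many components: one must justify that the total winding number of $f$ equals $\sum_i n_i$ with all but finitely many $n_i$ zero. I would handle this either by a direct covering-space argument — lift $f$ through $\exp:\mathbb R\to\mathbb S^1$ to $\tilde f:[0,1]\to\mathbb R$, note $\tilde f(K)\subseteq\mathbb Z$ is where the lift takes integer values, and observe that on each component $(a_i,b_i)$ the net change $\tilde f(b_i)-\tilde f(a_i)=n_i$ is an integer, while $\sum_i n_i=\tilde f(1)-\tilde f(0)\neq 0$ by telescoping over the (at most countable, convergent) family of components — or, more cheaply, by simply invoking that $f$ surjects onto $\mathbb S^1$ (the omitted-point argument above) and then choosing the component whose image contains the antipode, and showing any such component's image is \emph{closed in} $\mathbb S^1\setminus\{1\}$ as well as dense, forcing it to be all of $\mathbb S^1\setminus\{1\}$. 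I expect the lift-and-telescope version to be the cleanest, with the only delicate point being the convergence of the endpoints $a_i,b_i$ needed to telescope.
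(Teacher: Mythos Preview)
Your primary (lift-and-telescope) approach is correct. Uniform continuity of the lift $\tilde f$ forces all but finitely many of the integers $n_i:=\tilde f(b_i)-\tilde f(a_i)$ to vanish, and the telescoping $\sum_i n_i=\tilde f(1)-\tilde f(0)$ goes through once you check that $\tilde f$ takes the same value at the two endpoints of any closed subinterval of $[0,1]$ whose interior meets only components with $n_i=0$; this follows from a short $\sup$ argument using that $\tilde f|_K$ is continuous and integer-valued. Then any component with $n_i\neq 0$ gives the desired $(a,b)$, since $f|_{[a_i,b_i]}$ is a non-null-homotopic loop, hence surjective, while $1\notin f(a_i,b_i)$.

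Your ``cheaper'' alternative, however, has a genuine gap. Picking a component $(a_i,b_i)$ whose image contains the antipode $-1$ does \emph{not} force $f(a_i,b_i)=\mathbb S^1\setminus\{1\}$. You are right that $f(a_i,b_i)=f[a_i,b_i]\setminus\{1\}$ is closed and connected in $\mathbb S^1\setminus\{1\}$, but there is no reason it should be dense: $f[a_i,b_i]$ may well be just one of the two closed arcs joining $1$ to $-1$, in which case $f|_{[a_i,b_i]}$ is null-homotopic and the image is a proper sub-arc. So the second route does not bypass the winding-number bookkeeping; you really do need to select a component with $n_i\neq 0$, not merely one that hits $-1$.

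The paper's proof takes a genuinely different route. Instead of decomposing $[0,1]\setminus f^{-1}(1)$ into all of its components and summing degrees, it locates the desired interval directly by an extremal argument: set $\tau$ to be the infimum of those $t\in f^{-1}(1)$ for which $f|_{[0,t]}$ is not null-homotopic, then $\sigma$ the supremum of those $s\le\tau$ in $f^{-1}(1)$ for which $f|_{[s,\tau]}$ is not null-homotopic; uniform continuity (with the $|f(s)-f(t)|<\frac12$ estimate) shows both extrema are attained, and minimality/maximality then forbids $f$ from hitting $1$ on $(\sigma,\tau)$. No lift or sum ever appears. Your approach is more conceptual (it explains \emph{why} such an interval must exist via additivity of degree), while the paper's is more bare-handed and dovetails with the style of the subsequent lemmas on figure-eight and wedge-of-circles spaces, which iterate the same $\inf/\sup$ trick.
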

%%%%%%%%%%%%%%%%%%%%%%%%%%%%%%%%%%%%
\begin{proof}
In the following proof for $g:[u,v]\to X$ with $g_{[u,v]}:[0,1]\to X$ we mean
$g_{[u,v]}(t)=g(t(v-u)+u)$.
Since $f:[0,1]\to\mathbb S^1$ is uniformly continuous,there exists
$\varepsilon>0$ such that for all $s,t\in[0,1]$ with $|s-t|<\varepsilon$
we have $|f(s)-f(t)|<\frac12$.
\\
Let $T=\{t\in[0,1]:f(0)=f(t)=1$ and $f_{[0,t]}:[0,1]\to\mathbb
S^1$ is not null-homotopic$\}$. We have $T\neq\varnothing$, since
$1\in T$. Suppose $\tau=\inf(T)$. Since $f$ is continuous and
$T\subseteq f^{-1}(1)$, thus $f(\tau)=1$. We claim that $\tau\in
T$.There exists $t\in T$ such that $0\leq t-\tau<\varepsilon$, if
$\tau=t\in T$ we are done, otherwise since
$f_{[\tau,t]}([0,1])=f[\tau,t]\subseteq \{x\in\mathbb
S^1:|x-1|=|x-f(\tau)|<\frac12\}\subseteq\mathbb
S^1\setminus\{-1\}$, thus $f_{[\tau,t]}$ is null-homotopic. On
the other hand
$[f_{[0,t]}]=[f_{[0,\tau]}]*[f_{[\tau,t]}]=[f_{[0,\tau]}]$ and
$f_{[0,\tau]}$ is not null-homotopic, which indicates $\tau\in T$.
\\
Let $S=\{s\in[0,\tau]:f(s)=f(\tau)=1$ and
$f_{[s,\tau]}:[0,1]\to\mathbb
S^1$ is not null-homotopic$\}$, so $0\in S$ and $S\neq\varnothing$.
let $\sigma=\sup(S)$. Similar to first part of proof, $\sigma\in
S$. It is clear that $\sigma<\tau$. Moreover
$[f_{[0,\tau]}]=[f_{[0,\sigma]}]*[f_{[\sigma,\tau]}]$
and using the way of choose of
$\tau$,
$f_{[0,\sigma]}:[0,1]\to\mathbb
S^1$ is null-homotopic,  thus
$[f_{[0,\tau]}]=[f_{[\sigma,\tau]}]$  and $f_{[\sigma,\tau]}:[0,1]\to\mathbb
S^1$is not null-homotopic.
\\
Since $f_{[\sigma,\tau]}:[0,1]\to\mathbb S^1$ is
not null-homotopic, $f[\sigma,\tau]=f_{[\sigma,\tau]}([0,1])=\mathbb S^1$.
\\
On the other hand if there exists $\zeta\in(\sigma,\tau)$ such that
$f(\zeta)=1$. Respectively using the way of choose of
$\tau$ and $\sigma$, two maps $f_{[0,\zeta]}:[0,1]\to\mathbb
S^1$
and $f_{[\zeta,\tau]}:[0,1]\to\mathbb
S^1$ are null-homotopic. Using
$[f_{[0,\tau]}]=[f_{[0,\zeta]}]*[f_{[\zeta,\tau]}]$,
$f_{[0,\sigma]}[0,1]\to\mathbb
S^1$ is null-homotopic, which
is a contradiction.
Therefore for all
$\zeta\in(\sigma,\tau)$ we have $f(\zeta)\neq1$, which shows
$f(\sigma,\tau)=\mathbb S^1\setminus\{1\}$.
\end{proof}
%%%%%%%%%%%%%%%%%%%%%%%%%%%%%%%%%%%%
\begin{lemma}\label{bazlem}
If $X=(\mathbb S^1-1)\cup(\mathbb S^1+1)$ ($X$ and Figure {\large
{\bf 8}} are homeomorph), $\rho:[0,1]\to X$ with $\rho(t)=e^{4\pi
it}-1$ for $t\in[0,\frac12]$ and $\rho(t)=-e^{4\pi it}+1$ for
$t\in[\frac12,1]$, and loop $f:[0,1]\to X$ with $f(0)=f(1)=0$ is homotopic to
$\rho:[0,1]\to X$, then there exist $a,b,c,d\in[0,1]$ such that
$a<b\leq c<d$, $f(a)=f(b)=f(c)=f(d)=0$, $f(a,b)=(\mathbb
S^1-1)\setminus\{0\}$ and $f(c,d)=(\mathbb S^1+1)\setminus\{0\}$.
\end{lemma}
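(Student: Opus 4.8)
The plan is to work in the universal cover of the figure eight and to read the two required subintervals off of a single lift of $f$.

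\textbf{Setup.} Write $X_L:=\mathbb S^1-1$ and $X_R:=\mathbb S^1+1$, so $X:=X_L\cup X_R$ is the figure eight, $X_L\cap X_R=\{0\}$, and $\pi_1(X,0)$ is the free group on the classes $a_1,a_2$ of loops going once around $X_L$ and once around $X_R$ respectively; in this notation $[\rho]=a_1a_2$. Let $p:\widetilde X\to X$ be the universal covering. It is classical that $\widetilde X$ is a tree: its vertex set is $p^{-1}(0)$, each edge joins two distinct vertices, and $p$ carries the interior of each edge homeomorphically onto $X_L\setminus\{0\}$ or onto $X_R\setminus\{0\}$. Fix $\widetilde 0\in p^{-1}(0)$ and lift $f$ and $\rho$ to $\widetilde f,\widetilde\rho:[0,1]\to\widetilde X$ with $\widetilde f(0)=\widetilde\rho(0)=\widetilde 0$. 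Since $\rho$ runs once around $X_L$ and then once around $X_R$, $\widetilde\rho$ first traverses a full edge $e_1$ lying over the $X_L$-petal and then a full edge $e_2$ lying over the $X_R$-petal; hence the geodesic in $\widetilde X$ from $\widetilde 0$ to $w:=\widetilde\rho(1)$ is $\widetilde 0\xrightarrow{e_1}v\xrightarrow{e_2}w$ with $v\in p^{-1}(0)$, $p(\operatorname{int}e_1)=X_L\setminus\{0\}$ and $p(\operatorname{int}e_2)=X_R\setminus\{0\}$ (and the three vertices are distinct, the geodesic having length $2$). Because $f$ is homotopic (rel endpoints) to $\rho$, the monodromy lemma gives $\widetilde f(1)=w$.

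\textbf{Extracting the intervals.} Using that in a tree every vertex is a cut point, set
\begin{align*}
c&:=\sup\{t:\widetilde f(t)=v\}, & d&:=\inf\{t\ge c:\widetilde f(t)=w\},\\
a&:=\sup\{t\le c:\widetilde f(t)=\widetilde 0\}, & b&:=\inf\{t\ge a:\widetilde f(t)=v\}.
\end{align*}
Each of the four sets is nonempty (respectively because $\widetilde f$ meets $v$, because $\widetilde f(1)=w$, because $\widetilde f(0)=\widetilde 0$, and because $\widetilde f(c)=v$), and elementary comparisons (e.g. $a<c$ since $\widetilde f(a)=\widetilde 0\ne v=\widetilde f(c)$, $b\le c$ since $c$ lies in the set defining $b$) yield $a<b\le c<d$ together with $\widetilde f(a)=\widetilde 0$, $\widetilde f(b)=\widetilde f(c)=v$, $\widetilde f(d)=w$; applying $p$ gives $f(a)=f(b)=f(c)=f(d)=0$. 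It remains to identify $f(a,b)$ and $f(c,d)$. On $(c,d)$ one has $\widetilde f(t)\ne v$ by the choice of $c$ and $\widetilde f(t)\ne w$ by the choice of $d$; I claim $\widetilde f\bigl((c,d)\bigr)\subseteq\operatorname{int}e_2$. Indeed, if $\widetilde f(t_0)$ were outside the closed edge $e_2$ for some $t_0\in(c,d)$, then, since the only points at which a path can pass into or out of the closed edge $e_2$ are its two endpoints $v$ and $w$, the path $\widetilde f$ would equal $v$ or $w$ at some time strictly between $c$ and $d$, contradicting the definition of $c$ or of $d$. As $\widetilde f|_{[c,d]}$ is continuous and attains both endpoints of the arc $e_2$, it is onto $e_2$, so $\widetilde f\bigl((c,d)\bigr)=\operatorname{int}e_2$ and therefore $f(c,d)=p(\operatorname{int}e_2)=X_R\setminus\{0\}=(\mathbb S^1+1)\setminus\{0\}$. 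The identical argument on $[a,b]\subseteq[a,c]$, now using the cut vertices $\widetilde 0$ and $v$ and the edge $e_1$, gives $f(a,b)=X_L\setminus\{0\}=(\mathbb S^1-1)\setminus\{0\}$. Together with $a<b\le c<d$ this is exactly the claim.

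\textbf{Main obstacle.} The covering-space input (the universal cover of a wedge of circles is a tree, the local form of $p$ on an edge, and the monodromy lemma) is standard, and the choice of $a,b,c,d$ is merely ``last/first visit'' bookkeeping. The delicate step is the separation argument confining $\widetilde f\bigl((c,d)\bigr)$ (and $\widetilde f\bigl((a,b)\bigr)$) to the interior of one edge: one must make precise that in $\widetilde X$ each open edge is a bridge and that the only access points of a closed edge are its two endpoints, and then combine this with the extremality built into the definitions of $a,b,c,d$. This is purely graph-theoretic, but it is the heart of the matter — observe that abelian data alone (the winding numbers of $f^{X_L}$ and $f^{X_R}$, which would already let Lemma~\ref{lem} produce an $X_L$-covering interval and an $X_R$-covering interval) cannot force those two intervals to occur in the order ``left, then right'', so the non-abelian information carried by the tree is genuinely needed.
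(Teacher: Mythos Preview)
Your argument is correct and is genuinely different from the paper's. The paper proceeds by retracting $f$ onto each petal, invoking Lemma~\ref{lem} to produce (finitely many) candidate subintervals $\Gamma_1,\Gamma_2$, observing that intervals from $\Gamma_1$ and $\Gamma_2$ are pairwise disjoint, and then arguing by contradiction: if every $\Gamma_2$-interval preceded every $\Gamma_1$-interval, the gap pieces would be null-homotopic and one would obtain $[\rho_{[0,\frac12]}]*[\rho_{[\frac12,1]}]=[\rho_{[\frac12,1]}]^{q}*[\rho_{[0,\frac12]}]^{p}$, impossible in the free group $\pi_1(X)$. Your route instead lifts once to the universal cover, uses that $\widetilde X$ is a tree so the midpoint $v$ of the geodesic $\widetilde 0\to w$ separates $\widetilde 0$ from $w$ (this is the fact that makes $\{t:\widetilde f(t)=v\}\ne\varnothing$, which you use without stating explicitly), and then reads off the two intervals by last/first visit times; the connected-component argument in $\widetilde X\setminus\{v,w\}$ pinning $\widetilde f((c,d))$ to $\operatorname{int}e_2$ is clean and correct. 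Your approach is shorter, yields the ordering $a<b\le c<d$ directly rather than by contradiction, and extends verbatim to Lemma~\ref{bazbazlem} (the geodesic in the tree has $p$ edges instead of two). The paper's approach, by contrast, stays closer to Lemma~\ref{lem} and the van Kampen description of $\pi_1(X)$, avoiding any appeal to covering spaces.
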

%%%%%%%%%%%%%%%%%%%%%%%%%%%%%%%%%%%%
\begin{proof}
Let:
\[f^{\mathbb S^1-1}(t)=\left\{\begin{array}{lc}
    f(t) & f(t)\in{\mathbb S^1-1} \\
    0 & {\rm otherwise}
    \end{array}\right.\SP,\SP
    \rho^{\mathbb S^1-1}(t)=\left\{\begin{array}{lc}
    \rho(t) & \rho(t)\in{\mathbb S^1-1} \\
    0 & {\rm otherwise}
    \end{array}\right.\]
\[f^{\mathbb S^1+1}(t)=\left\{\begin{array}{lc}
    f(t) & f(t)\in{\mathbb S^1+1} \\
    0 & {\rm otherwise}
    \end{array}\right.\SP,\SP
    \rho^{\mathbb S^1+1}(t)=\left\{\begin{array}{lc}
    \rho(t) & \rho(t)\in{\mathbb S^1+1} \\
    0 & {\rm otherwise}
    \end{array}\right.\]
Two maps $f^{\mathbb S^1-1},\rho^{\mathbb S^1-1}:[0,1]\to\mathbb S^1-1$
are homotopic, since $f,\rho:[0,1]\to X$ are homotopic.
Since $\rho^{\mathbb S^1-1}:[0,1]\to\mathbb S^1-1$ is not null-homotopic,
by Lemma~\ref{lem} there exists
$a,b\in[0,1]$ with $f^{\mathbb S^1-1}(a,b)=(\mathbb S^1-1)\setminus\{0\}$ and
$f^{\mathbb S^1-1}(a)=f^{\mathbb S^1-1}(b)=0$.
For all $t\in(a,b)$ we have $f^{\mathbb S^1-1}(t)\neq0$,
therefore $f(t)=f^{\mathbb S^1-1}(t)$. Thus $f(a,b)=f^{\mathbb S^1-1}(a,b)=
{\mathbb S^1-1}\setminus\{0\}$.
Moreover $f^{\mathbb S^1-1}(a)=f^{\mathbb S^1-1}(b)=0$,
thus $f(a),f(b)\in {\mathbb S^1+1}$. Using the continuity of $f$ we have
$f(a),f(b)\in\overline{f(a,b)}={\mathbb S^1-1}$, therefore
$f(a),f(b)\in{\mathbb S^1-1}\cap{\mathbb S^1+1}=\{0\}$ and
$f(a)=f(b)=0$. Let:
\[\Gamma_1:=\{(a,b)\in[0,1]\times[0,1]:f(a,b)=(\mathbb S^1-1)\setminus\{0\},f(a)=f(b)=0\}\:,\]
\[\Gamma_2:=\{(a,b)\in[0,1]\times[0,1]:f(a,b)=(\mathbb S^1+1)\setminus\{0\},f(a)=f(b)=0\}\:.\]
By the above discussion, $\Gamma_1\neq\varnothing$. It is evident
that for all distinct $(a,b),(a',b')\in\Gamma_1$ we have
$(a,b)\cap(a',b')=\varnothing$. Since $f:[0,1]\to X$ is uniformly
continuous there exists $\delta>0$ such that:
\[\forall u,v\in[0,1]\SP(|u-v|<\delta\Rightarrow|f(u)-f(v)|<1)\]
which leads to:
\[\forall u,v\in[0,1]\SP(|u-v|<\delta\Rightarrow f(u,v)\neq {\mathbb S^1-1})\]
so for all $(a,b)\in\Gamma_1$ we have $b-a\geq\delta$.
\\
Thus $\Gamma_1$ is finite,
since $\Gamma_1$ is a nonempty collection of disjoint subintervals
of $[0,1]$ with $b-a\geq\delta$ for all $(a,b)\in\Gamma_1$.
\\
In a similar way $\Gamma_2$ is a nonempty finite collection of disjoint subintervals
of $[0,1]$.
\\
It is evident that for all $(a,b)\in\Gamma_1$ and $(c,d)\in\Gamma_2$
we have $(a,b)\cap(c,d)\neq\varnothing$ (since $f(a,b)\cap f(c,d)=
((\mathbb S^1-1)\cap(\mathbb S^1+1))\setminus\{0\}=\varnothing$),
therefore $a<b\leq c<d$ or $c<d\leq a<b$.
\\
If there exist $(a,b)\in\Gamma_1$ and $(c,d)\in\Gamma_2$ with
 $a<b\leq c<d$, we are done, otherwise suppose
for all $(a,b)\in\Gamma_1$ and $(c,d)\in\Gamma_2$ we have
$c<d\leq a<b$. Let
\[\Gamma_1=\{(a_1,b_1),\ldots,(a_n,b_n)\}\:,\:
    \Gamma_2=\{(c_1,d_1),\ldots,(c_m,d_m)\}\:.\]
and suppose
\[c_1<d_1\leq c_2<d_2\leq\cdots\leq
    c_m<d_m\leq a_1<b_1\leq a_2<b_2\leq\cdots\leq a_n<b_n\:\]
Using the same notations as in Lemma~\ref{lem}, if
$d_1<c_2$, then $f_{[d_1,c_2]}:[0,1]\to X$ is null-homotopic
(use Lemma~\ref{lem} and consider $\Gamma_1,\Gamma_2$).
if $p\in[0,1]$ suppose $f_{[p,p]}:[0,1]\to X$ is constant 0 function.
So
\[f_{[0,c_1]},f_{[d_1,c_2]},f_{[d_2,c_3]},\ldots,f_{[d_{m-1},c_m]},
f_{[d_{m},a_1]},f_{[b_1,a_2]},f_{[b_{n-1},a_n]},f_{[a_n,1]}:[0,1]\to X\]
are null-homotopic. Thus
\[[f]=[f_{[c_1,d_1]}]*\cdots*[f_{[c_m,d_m]}]*[f_{[a_1,b_1]}]*\cdots*[f_{[a_n,b_n]}]\:\]
For all $i,j$ we have $f_{[c_i,d_i]}\subseteq\mathbb S^1+1$
and $f_{[a_j,b_j]}\subseteq\mathbb S^1-1$.
thus there exist $q_1,\ldots,q_m,p_1,\ldots,p_n\geq0$ with
\begin{center}
$[f_{[c_i,d_i]}]=[\rho_{[\frac12,1]}]^{q_i}$  ($1\leq i\leq m$) and
$[f_{[a_j,b_j]}]=[\rho_{[0,\frac12]}]^{p_j}$ ($1\leq j\leq n$)
\end{center}
(we recall that
$\pi_1(X)=\pi_1(S^1-1)*\pi_1(S^1+1)=
<[\rho_{[0,\frac12]}]>*<[\rho_{[\frac12,1]}]>$, by van Kampen Theorem).
Thus
\[[\rho_{[0,\frac12]}]*[\rho_{[\frac12,1]}]=[\rho]=[f]
    =[\rho_{[\frac12,1]}]^{q_1+\cdots+q_m}*[\rho_{[0,\frac12]}]^{p_1+\cdots+p_n}\]
which is a contradiction since $\pi_1(X)$ is nonabelian free group over
two generators $[\rho_{[0,\frac12]}]$ and $[\rho_{[\frac12,1]}]$.
\end{proof}
%%%%%%%%%%%%%%%%%%%%%%%%%%%%%%%%%%%%
\begin{lemma}\label{bazbazlem}
If loop $f:[0,1]\to {\mathcal Z}$ with $f(0)=f(1)=0$ is homotopic to
$f_{\mathcal Z}:[0,1]\to {\mathcal Z}$, then there exist $s_1,t_1,s_2,t_2,\ldots,s_p,t_p\in[0,1]$
such that
$s_1<t_1\leq s_2<t_2\leq\ldots\leq s_p<t_p$, $f(s_j)=f(t_j)=0$, $f(s_j,t_j)=
\{\frac1je^{2\pi i(x-j-\frac14)}+\frac{i}{j}:x\in[0,1]\}\setminus\{0\}$
for all $j\in\{1,\ldots,p\}$.
\end{lemma}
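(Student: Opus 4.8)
The plan is to run the argument of Lemma~\ref{bazlem} with the figure eight replaced by the $p$-petalled rose. Write $\mathcal Z=\bigcup_{j=1}^{p}D_j$, where $D_j:=\{\frac1je^{2\pi i(x-j-\frac14)}+\frac ij:x\in[0,1]\}$ is the $j$-th circle; the $D_j$ are closed, linearly connected subspaces with $D_i\cap D_j=\{0\}$ for $i\ne j$, and $\mathcal Z\setminus\{0\}=\bigsqcup_{j=1}^{p}(D_j\setminus\{0\})$ is the decomposition of $\mathcal Z\setminus\{0\}$ into connected components. By van Kampen's theorem $\pi_1(\mathcal Z)=\pi_1(D_1)*\cdots*\pi_1(D_p)$ is free on $g_1,\dots,g_p$, where $g_j$ is the class of the positively oriented generator of $\pi_1(D_j)$; since $f_{\mathcal Z}$ restricted to $[\frac{k-1}{p},\frac kp]$ runs once positively around $D_k$ and is $0$ at both endpoints, $[f_{\mathcal Z}]=g_1g_2\cdots g_p$. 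For each $j$ let $P_j:\mathcal Z\to D_j$ be the retraction that is the identity on $D_j$ and sends $\bigcup_{i\ne j}D_i$ to $0$; this is the map ``$P$'' of Remark~\ref{useful} for $X=D_j$, $Y=\bigcup_{i\ne j}D_i$, and $P_j\circ f$ is $f^{D_j}$ in the notation of Convention~\ref{good10}. As $f\simeq f_{\mathcal Z}$, Remark~\ref{useful}(A) gives $f^{D_j}\simeq (f_{\mathcal Z})^{D_j}$, and $(f_{\mathcal Z})^{D_j}$ is path-homotopic to the once-around loop of $D_j$, hence not null-homotopic (the case $p=1$ is Lemma~\ref{lem} itself, so we may take $p\ge2$).

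Next I would locate the intervals on which $f$ runs once around each petal. Applying Lemma~\ref{lem} to $f^{D_j}:[0,1]\to D_j$ yields $a_j<b_j$ with $f^{D_j}(a_j)=f^{D_j}(b_j)=0$ and $f^{D_j}(a_j,b_j)=D_j\setminus\{0\}$. For $t\in(a_j,b_j)$ one has $f^{D_j}(t)\ne0$, so $f(t)\in D_j\setminus\{0\}$ and $f(t)=f^{D_j}(t)$; thus $f(a_j,b_j)=D_j\setminus\{0\}$, and by continuity $f(a_j),f(b_j)\in\overline{D_j\setminus\{0\}}=D_j$, while $f^{D_j}(a_j)=0$ places $f(a_j)$ in $\{0\}\cup\bigcup_{i\ne j}D_i$, whence $f(a_j)=f(b_j)=0$. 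So, setting
\[\Gamma_j:=\{(a,b):0\le a<b\le1,\ f(a)=f(b)=0,\ f(a,b)=D_j\setminus\{0\}\},\]
each $\Gamma_j$ is nonempty. Any two distinct members of $\bigcup_j\Gamma_j$ are disjoint as open intervals, since an interior point of an overlap would be forced simultaneously to equal $0$ and to lie in some $D_i\setminus\{0\}$. By uniform continuity choose $\delta>0$ with $|u-v|<\delta\Rightarrow|f(u)-f(v)|<\frac1p$; since $\mathrm{diam}\,D_j=\frac2j\ge\frac2p>\frac1p$, no member of $\bigcup_j\Gamma_j$ can be shorter than $\delta$, so $\bigcup_j\Gamma_j$ is finite. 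List it in increasing order as $J_1\prec J_2\prec\cdots\prec J_N$ (with $I\prec I'$ meaning $\sup I\le\inf I'$), and let $c(\ell)$ be the index with $J_\ell\in\Gamma_{c(\ell)}$.

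The heart of the argument is then that the ``gap'' loops — $f$ restricted and reparametrised to $[0,\inf J_1]$, to $[\sup J_\ell,\inf J_{\ell+1}]$ for $1\le\ell<N$, and to $[\sup J_N,1]$ — are all null-homotopic. Each is a loop $g$ at $0$; on a component $(u,v)$ of the complement of $g^{-1}(0)$ the map $g$ sends $(u,v)$ into a single $D_j\setminus\{0\}$ (by connectedness) with $g(u)=g(v)=0$, so $g_{[u,v]}$ is a loop in $D_j$, and were it not null-homotopic Lemma~\ref{lem} would exhibit $\hat a<\hat b$ in $[u,v]$ with $g(\hat a,\hat b)=D_j\setminus\{0\}$ and (by the same continuity argument as above) $g(\hat a)=g(\hat b)=0$, i.e.\ a member of $\Gamma_j$ sitting inside a gap — impossible. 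Hence every $g_{[u,v]}$ is null-homotopic; lifting $g$ to the universal cover of $\mathcal Z$, which is a tree since $\mathcal Z$ is a finite graph, the lift lies over $0$ at every point of $g^{-1}(0)$ and returns to the same vertex across each complementary component, so it is constant on $g^{-1}(0)$ and $g$ is null-homotopic. Granting this, $[f]=[f_{J_1}]*\cdots*[f_{J_N}]$, where each $f_{J_\ell}$ is a loop in $D_{c(\ell)}$ with image $D_{c(\ell)}\setminus\{0\}$, so $[f_{J_\ell}]=g_{c(\ell)}^{\,q_\ell}$ for some integer $q_\ell$. Thus in the free group $\pi_1(\mathcal Z)$,
\[g_1g_2\cdots g_p=[f_{\mathcal Z}]=[f]=g_{c(1)}^{\,q_1}g_{c(2)}^{\,q_2}\cdots g_{c(N)}^{\,q_N}.\]
The left side is reduced, and free reduction of the right side only merges adjacent syllables of equal base and deletes trivial ones, never reordering; so the syllables of the reduced word form an ordered subsequence of the original syllables, giving $\ell_1<\ell_2<\cdots<\ell_p$ with $c(\ell_t)=t$ for $t=1,\dots,p$. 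Writing $J_{\ell_t}=(s_t,t_t)$ and combining $\ell_1<\cdots<\ell_p$ with $J_1\prec\cdots\prec J_N$ yields $s_1<t_1\le s_2<t_2\le\cdots\le s_p<t_p$, together with $f(s_t)=f(t_t)=0$ and $f(s_t,t_t)=D_t\setminus\{0\}$, which is the assertion.

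I expect the main obstacle to be exactly the claim that the gap loops are genuinely null-homotopic \emph{in} $\mathcal Z$, not merely after applying each $P_j$: a priori a gap loop could be an infinite concatenation of small sub-loops, so one must rule out Hawaiian-earring-type behaviour, and this is where the essential finiteness of $p$ enters, through the tree structure of the universal cover of the finite graph $\mathcal Z$. Everything else — the promotion of the Lemma~\ref{lem} intervals to members of $\Gamma_j$ using continuity of $f$ at the endpoints, the disjointness and finiteness of the $\Gamma_j$, and reading off an increasing subsequence of petal indices from a free-group identity — is routine, and it streamlines the two-petal case analysis used in Lemma~\ref{bazlem}.
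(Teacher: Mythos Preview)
Your argument is correct and follows the route the paper intends: the paper's proof is literally ``use the same method described in Lemma~\ref{bazlem} and note that $\pi_1(\mathcal Z)$ is the free group on $p$ generators,'' and you carry out exactly that generalization --- projections $P_j$ to each petal, the sets $\Gamma_j$, uniform continuity for finiteness, null-homotopy of the gap loops, and a free-group identity to extract the ordered intervals.

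Two points worth noting. First, your universal-cover/tree argument for the null-homotopy of the gap loops is more than the paper actually supplies: in Lemma~\ref{bazlem} the paper only writes ``use Lemma~\ref{lem} and consider $\Gamma_1,\Gamma_2$,'' which at face value shows each \emph{projection} of a gap loop is trivial but does not by itself kill, say, a commutator; your observation that $\widetilde{\mathcal Z}$ is a tree and that the lift is locally constant on $g^{-1}(0)$ (hence globally constant there, since the endpoints of each complementary interval agree) is the honest way to close this, and it is precisely where finiteness of $p$ is used. Second, your endgame is a direct extraction of an increasing subsequence $\ell_1<\cdots<\ell_p$ from the syllable word via the standard fact that free reduction never reorders letters, whereas Lemma~\ref{bazlem} argues by contradiction on the relative order of the $\Gamma_1$- and $\Gamma_2$-intervals; your version scales to $p$ petals without a case analysis.
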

%%%%%%%%%%%%%%%%%%%%%%%%%%%%%%%%%%%%
\begin{proof}
Use the same method described in Lemma~\ref{bazlem} and
note to the fact that $\pi_1(\mathcal Z)$ is nonabelian free group
over $p $ generators $[h_{[\frac{k-1}p,\frac{k}p]}]$ for
$k=1,\ldots,p$ where $h:=f_{\mathcal Z}$ and using the notations of
Lemma~\ref{lem}.
\end{proof}
%%%%%%%%%%%%%%%%%%%%%%%%%%%%%%%%%%%%
%%%%%%%%%%%%%%%%%%%%%%%%%%%%%%%%%%%%
\begin{note}\label{Narges4}
Consider loops $f,g:[0,1]\to\mathcal X$ such that
$f(0)=f(1)=g(0)=g(1)=0$. For nonempty subset
$\Gamma$ of $\mathbb N$ and $h:[0,1]\to \mathcal X$ let:
    \[h^\Gamma(x)=\left\{\begin{array}{lc} h(x) &
                h(x)\in\bigcup\{C_n:n\in\Gamma\}\:, \\
             0 & h(x)\in ({\mathcal X}\setminus\bigcup\{C_n:n\in\Gamma\})\cup\{0\}\:.
     \end{array}\right.\]
(As a matter of fact we denote $h^{\bigcup\{C_n:n\in\Gamma\}}$
(see Convention~\ref{good10}) briefly by $h^\Gamma$)
\begin{itemize}
\item[1.] If $f,g:[0,1]\to\mathcal X$
    are homotopic, then $f^\Gamma,g^\Gamma:[0,1]\to \bigcup\{C_n:n\in\Gamma\}$ are homotopic
        (equivalently
    $f^\Gamma,g^\Gamma:[0,1]\to\mathcal X$ are homotopic).
\item[2.] For loop $h:[0,1]\to\mathcal X$ with $h(0)=h(1)=0$ define:
    \[A(h):=\{n\in\mathbb N: h^{\{n\}}:[0,1]\to C_n\:{\rm is \: not \: null-homotopic}\}\:.\]
    Then $A(h)$ is a subset of:
    \[\{n\in{\mathbb N}:\exists a,b\in[0,1]\:
        (h(a,b)=C_n\setminus\{0\}\wedge h(a)=h(b)=0)\}\:.\]
    Moreover if $f,g:[0,1]\to\mathcal X$ are
    homotopic, then $A(f)=A(g)$.
\item[3.] For loop $h:[0,1]\to\mathcal X$ with $h(0)=h(1)=0$,
    we have $|h^{-1}(0)|\geq|A(h)|$.
\item[4.] If $[f]\in\mathfrak P^\omega(\mathcal X)$,
    then $|A(f)|<\omega$ and $A(f)$ is finite.
\end{itemize}
\end{note}
%%%%%%%%%%%%%%%%%%%%%%%%%%%%%%%%%%%%
\begin{proof} $\:$
\begin{itemize}
\item[1.]
Note to the fact that $A=\bigcup\{C_n:n\in\Gamma\}$ and
    $B=(\mathcal X\setminus A)\cup\{0\}$ are closed (linear connected)
    subsets of $\mathcal X$. Moreover $A\cap B=\{0\}$. Now
use the same argument as in Convention~\ref{good10}.
\item[2.]
If $n\in A(h)$, then $h^{\{n\}}:[0,1]\to C_n$ is not null-homotopic.
By Lemma~\ref{lem} there exist $a,b\in[0,1]$  with $h(a)=h(b)=0$
and $h(a,b)=C_n\setminus\{0\}$. Use item (1) to complete the proof.
\item[3.] By (2) for all $n\in A(h)$ there exists
    $a_n,b_n\in[0,1]$ with $h(a_n,b_n)=C_n\setminus\{0\}$ and
    $h(a_n)=h(b_n)=0$.
    We claim that $\mathop{A(h)\to h^{-1}(0)}\limits_{n\mapsto a_n\SP}$
    is one to one. Suppose $n\neq m$ and $n,m\in A(h)$. By
    \[h(a_n,b_n)\cap h(a_m,b_m)=(C_n\setminus\{0\})\cap(C_m\setminus\{0\})
    =\varnothing\]
    we have $(a_n,b_n)\cap(a_m,b_m)=\varnothing$, thus
    $a_n\neq a_m$.
\item[4.] If
    $[f]\in\mathfrak P^\omega(\mathcal X)$, then by
    Note~\ref{Narges2} there exists $\omega-$loop
    $k:[0,1]\to\mathcal X$ with $k(0)=k(1)=0$ homotopic to
    $f:[0,1]\to\mathcal X$. By (3)
    we have $|A(k)|\leq|k^{-1}(0)|<\omega$. By item (2)
    we have $A(f)=A(k)$ which leads to
    $|A(f)|=|A(k)|\leq|k^{-1}(0)|<\omega$.
\end{itemize}
\end{proof}
%%%%%%%%%%%%%%%%%%%%%%%%%%%%%%%%%%%%
\begin{note}\label{Narges5}
For $(m,n)\in\mathbb N\times\mathbb N$ and loop $h:[0,1]\to\mathcal Y$
with base point $0$, define:
\[h^{(m,n)}(t)=\left\{\begin{array}{lc} h(t) & h(t)\in\dfrac1{2^{m+1}}C_n+\dfrac1m \: , \\
& \\
    \dfrac1m & h(t)\notin\dfrac1{2^{m+1}}C_n+\dfrac1m \: . \end{array}\right.\]
(As a matter of fact we denote $h^{\frac1{2^{m+1}}C_n+\frac1m}$
(see Convention~\ref{good10}) briefly by $h^{(m,n)}$)
\\
Moreover for loop $h:[0,1]\to\mathcal Y$ we define:
{\small \[B(h):=\left\{(m,n)\in{\mathbb N}\times{\mathbb
N}:h^{(m,n)}:[0,1]\to\dfrac1{2^{m+1}}C_n+\dfrac1m\:{\rm is \: not \:
null-homotopic}\right\}\:,\]}
then $B(h)$ is a subset of:
{\small\[\left\{(m,n)\in{\mathbb N}\times{\mathbb
N}:\exists a,b\in[0,1]\:
(h(a,b)=(\dfrac1{2^{m+1}}C_n+\dfrac1m)\setminus\{\dfrac1m\}\wedge
h(a)=h(b)=\dfrac1m)\right\}\]}
and for loops $f,g:[0,1]\to\mathcal Y$ with
$f(0)=f(1)=g(0)=g(1)=0$, we have: \\
\\
1. If $f,g:[0,1]\to\mathcal Y$ are homotopic, then
    $B(f)=B(g)$.
\\
2. For $m\in\mathbb N$, we have:
    \begin{itemize}
    \item[a.] $|f^{-1}(\frac1m)|\geq|\{n\in\mathbb N:(m,n)\in B(f)\}|$.
    \item[b.] If $[f]\in\mathfrak P^\omega(\mathcal Y)$, then
        $|\{n\in\mathbb N:(m,n)\in B(f)\}|<\omega$.
    \item[c.] If $\mathcal I$ is an ideal on $\mathcal Y$ and
        $[f]\in\mathfrak P^\omega_{\mathcal I}(\mathcal Y)$,
        then there exists $F\in\mathcal I$ such that for all
        $k\in\mathbb N$ with $\frac1k\in\mathcal Y\setminus F$, we have
        $|\{n\in\mathbb N:(k,n)\in B(f)\}|<\omega$.
    \end{itemize}
\end{note}
%%%%%%%%%%%%%%%%%%%%%%%%%%%%%%%%%%%%
\begin{proof} If $(m,n)\in B(h)$, then $h^{(m,n)}:[0,1]\to\frac1{2^{m+1}}C_n+\frac1m$ is
not null-homotopic, thus by Lemma~\ref{lem} there exist $a,b\in[0,1]$ with
$h(a,b)=(\frac1{2^{m+1}}C_n+\frac1m)\setminus\{\frac1m\}$ and
$h(a)=h(b)=\frac1m$.
\\
1) Suppose $f,g:[0,1]\to\mathcal Y$ are homotopic. For
$m,n\in\mathbb N$, two sets $\frac1{2^{m+1}}C_n+\frac1m$ and
$({\mathcal
Y}\setminus(\frac1{2^{m+1}}C_n+\frac1m))\cup\{\frac1m\}$  are
closed (linear) subsets of $\mathcal Y$ with
$(\frac1{2^{m+1}}C_n+\frac1m)\cap(({\mathcal
Y}\setminus(\frac1{2^{m+1}}C_n+\frac1m))\cup\{\frac1m\})=\{\frac1m\}$.
Thus using the same argument as in Convention~\ref{good10} (note
to the fact that base point in the proof of
Convention~\ref{good10} is not important) two maps
$f^{(m,n)},g^{(m,n)}:[0,1]\to\frac1{2^{m+1}}C_n+\frac1m$ are
homotopic, therefore $(m,n)\in B(f)$ if and only if $(m,n)\in
B(g)$. So $B(f)=B(g)$.
\\
2-a) For all $(m,n)\in B(f)$ there exists
$a_{(m,n)},b_{(m,n)}\in[0,1]$ with
$f(a_{(m,n)},b_{(m,n)})=(\frac1{2^{m+1}}C_n+\frac1m)\setminus\{\frac1m\}$ and
$f(a_{(m,n)})=f(b_{(m,n)})=\frac1m$.
We claim that
\[\mathop{\{n\in{\mathbb N}:(m,n)\in B(f)\}\to f^{-1}(\dfrac1m)}\limits_{
\SP\SP\SP\SP\SP\SP\SP\SP\SP n\mapsto a_{(m,n)}}\]
is one to one. Suppose $n\neq k$ and $(m,k),(m,n)\in B(f)$. By
    \[f(a_{(m,n)},b_{(m,n)})\cap
    f(a_{(m,k)},b_{(m,k)})\]
    \[=((\dfrac1{2^{m+1}}C_n+\dfrac1m)\setminus\{\dfrac1m\})
    \cap((\dfrac1{2^{m+1}}C_k+\dfrac1m)\setminus\{\dfrac1m\})
    =\varnothing\]
 we have $(a_{(m,n)},b_{(m,n)})\cap(a_{(m,k)},b_{(m,k)})=\varnothing$, thus
    $a_{(m,n)}\neq a_{(m,k)}$. Therefore $|f^{-1}(\frac1m)|\geq|\{n\in{\mathbb N}:(m,n)\in B(f)\}|$
\\
2-b) This item is a special case of (c) for ${\mathcal I}=\{\varnothing\}$.
\\
2-c) If $[f]\in\mathfrak P^\omega_{\mathcal I}(\mathcal Y)$,
    then by
    Note~\ref{Narges2} there exists $\omega\frac{\mathcal I}{}$loop
    $h:[0,1]\to\mathcal Y$ homotopic to
    $f:[0,1]\to\mathcal Y$
    also we may suppose $h(0)=h(1)=0$. There exists
    $F\in\mathcal I$ such that for all $z\in{\mathcal Y}\setminus
    F$ we have $|h^{-1}(z)|<\omega+1$. In particular for all
    $k\in\mathbb N$ with $\frac1k\in{\mathcal Y}\setminus F$ we
    have $|h^{-1}(\frac1k)|<\omega$, which leads to
    $|\{n\in\mathbb N:(k,n)\in B(h)\}|\leq|h^{-1}(\frac1k)|<\omega$ by (a).
    Using (1) we have $B(f)=B(h)$, thus
    $|\{n\in\mathbb N:(k,n)\in B(h)\}|=|\{n\in\mathbb N:(k,n)\in B(f)\}|<\omega$.
\end{proof}
%%%%%%%%%%%%%%%%%%%%%%%%%%%%%%%%%%%%
\section{$\mathfrak
P^c(\mathcal X)$ is a proper subset of $\pi_1(\mathcal X)$}
\noindent Here we want to prove $\pi_1(\mathcal X)\setminus \mathfrak
P^c(\mathcal X)\neq\varnothing$ step by step.
\\
Consider the following conventions in this section:
\\
Usually in order
to construct Cantor set, one may remove the following intervals
step by step from $[0,1]$:
\begin{center}
\scalebox{0.9}{
$\begin{array}{l}
    \: \\
    (c_1^1,d_1^1)=(\frac13,\frac23) \: , \\
    \: \\
    (c_2^1,d_2^1)=(\frac19,\frac29)\:,\:(c_2^2,d_2^2)=(\frac79,\frac89) \:, \\
    \SP \vdots \\
    (c_n^1,d_n^1)=(\frac1{3^n},\frac2{3^n})\:,\:(c_n^2,d_n^2)=(\frac23+\frac1{3^n},\frac23+\frac2{3^n})
        \:,\:\cdots\:,\:(c_n^{2^{n-1}},d_n^{2^{n-1}})=(1-\frac2{3^n},1-\frac1{3^n})\:, \\
    \SP \vdots \\ \: \end{array}$}
\end{center}
So $M=[0,1]\setminus\bigcup\{(c_n^i,d_n^i):n\in\mathbb
N,i\in\{1,...,2^{n-1}\}\}$ is Cantor set. Now suppose:
\begin{center}
\scalebox{0.85}{
$\begin{array}{l}
    \: \\
    (a_1,b_1)=(c_1^1,d_1^1) \:, \\ \: \\
    (a_2,b_2)=(c_2^1,d_2^1)\:,\: (a_3,b_3)=(c_2^2,d_2^2)\:, \\
    \SP \vdots \\
    (a_{2^{n-1}},b_{2^{n-1}})=(c_n^1,d_n^1)\:,\:(a_{2^{n-1}+1},b_{2^{n-1}+1})=(c_n^2,d_n^2)
        \:,\: \cdots \:,\: (a_{2^n-1},b_{2^n-1})=(c_n^{2^{n-1}},d_n^{2^{n-1}})\:, \\
    \SP\vdots \\ \: \end{array}$}
\end{center}
\textbf{Define $g:[0,1]\to{\mathcal X}$ with:
\[g(x)=\left\{\begin{array}{lc}
    \dfrac1ne^{2\pi i\frac{x-a_n}{b_n-a_n}}+\dfrac{i}{n}& x\in(a_n,b_n),n\in\mathbb N \\
    &\\
    0 & {\rm otherwise} \end{array}\right.\]
Suppose the loops $g,f:[0,1]\to\mathcal X$ are homotopic with
$f(0)=f(1)=0$. Consider the above mentioned $f$ and $g$ in this section.}
\\
It is well-known that (see \cite{Rudin}):
\[M=\left\{{\displaystyle\sum_{n=1}^\infty\frac{x_n}{3^n}}:
    \forall n\in\mathbb N\SP x_n\in\{0,2\}\right\}\:.\]
For $x={\displaystyle\sum_{n=1}^\infty\frac{x_n}{3^n}}\in M$ with $x_n\in\{0,2\}$ ($n\in\mathbb N$).
For $m\in\mathbb N$ choose $n^x_m\in\mathbb N$ such that:
\[a_{n^x_m}=\left\{\begin{array}{lc}
    \min\{c_m^i:1\leq i\leq2^{m-1},x\leq c_m^i\} & x_m=0 \\
    \max\{c_m^i:1\leq i\leq2^{m-1},c_m^i\leq x\} & x_m=2
    \end{array}\right.\]
also let
\[E^x:=\{n\in{\mathbb N}:x_n=0\}\:,\:F^x:=\{n\in{\mathbb N}:x_n=2\}\:.\]
Finally consider:
\[K:=\{x\in M: E^x\:{\rm and}\: F^x\:{\rm are \: infinite}\}\:.\]
We have the following sequel of lemmas and notes.
%%%%%%%%%%%%%%%%%%%%%%%%%%%%%%%%%%%%
\begin{lemma}\label{note-cantor}
For $x={\displaystyle\sum_{n=1}^\infty\frac{x_n}{3^n}}\in M$ with $x_n\in\{0,2\}$ ,
we have:
\[a_{n_k^x}=\left\{\begin{array}{lc}
    {\displaystyle\sum_{n=1}^k\frac{x_n}{3^n}}+\dfrac1{3^k} & x_k=0 \: , \\
    {\displaystyle\sum_{n=1}^k\frac{x_n}{3^n}}-\dfrac1{3^k} & x_k=2 \: , 
    \end{array}\right.
    \SP{\rm and}\SP
b_{n_k^x}=\left\{\begin{array}{lc}
    {\displaystyle\sum_{n=1}^k\frac{x_n}{3^n}}+\dfrac2{3^k} & x_k=0 \: , \\
    {\displaystyle\sum_{n=1}^k\frac{x_n}{3^n}} & x_k=2 \: .
    \end{array}\right.\tag{*}\]
And:
\[|a_{n_k^x}-x|\leq\dfrac2{3^k}\SP{\rm and}\SP|b_{n_k^x}-x|\leq\dfrac2{3^k}\:
({\rm for\: all\:}k\in\mathbb {N})\:.\tag{**}\]
\end{lemma}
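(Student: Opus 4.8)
The plan is to reduce everything to the self-similar ternary structure of $M$ together with the single elementary fact $\sum_{n>k}\frac{2}{3^n}=\frac{1}{3^k}$.

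First I would record the standard picture of the construction. After the gaps of levels $1,\dots,m-1$ have been removed, the set $[0,1]\setminus\bigcup\{(c_n^i,d_n^i):n\le m-1\}$ is a pairwise disjoint union of $2^{m-1}$ \emph{closed} intervals of length $3^{-(m-1)}$, one for each word $(y_1,\dots,y_{m-1})\in\{0,2\}^{m-1}$, namely $[\,\sum_{n=1}^{m-1}\frac{y_n}{3^n},\ \sum_{n=1}^{m-1}\frac{y_n}{3^n}+\frac{1}{3^{m-1}}\,]$; the level-$m$ gaps are exactly the open middle thirds of these, so every $c_m^i$ has the form $\sum_{n=1}^{m-1}\frac{y_n}{3^n}+\frac{1}{3^m}$ with matching endpoint $d_m^i=c_m^i+\frac{1}{3^m}$. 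Two distinct of the above closed level-$(m-1)$ intervals are disjoint, hence linearly ordered and separated by their left endpoints: if $\tau<\tau'$ are two such left endpoints then $\tau+3^{-(m-1)}<\tau'$.

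Now fix $x=\sum_{n=1}^{\infty}\frac{x_n}{3^n}\in M$ with $x_n\in\{0,2\}$ and set $\sigma_k:=\sum_{n=1}^{k}\frac{x_n}{3^n}$, so that $\sigma_k\le x\le\sigma_k+\frac{1}{3^k}$. The level-$(k-1)$ interval containing $x$ is the one labelled $(x_1,\dots,x_{k-1})$, i.e.\ $[\,\sigma_{k-1},\ \sigma_{k-1}+\frac{1}{3^{k-1}}\,]$, so $c:=\sigma_{k-1}+\frac{1}{3^k}$ is one of the $c_k^i$ and $d:=c+\frac{1}{3^k}=\sigma_{k-1}+\frac{2}{3^k}$ is the matching $d_k^i$; moreover $x\notin(c,d)$ because $x\in M$, and $x_k=0$ (resp.\ $x_k=2$) is equivalent to $x\le c$ (resp.\ $x\ge d$). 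I would then show this gap is the one selected by the definition of $n_k^x$. Any other $c_k^j$ is the left endpoint of the middle third of a different level-$(k-1)$ interval $[\,\tau,\ \tau+\frac{1}{3^{k-1}}\,]$ with $\tau\neq\sigma_{k-1}$, so by the separation above either $\tau<\sigma_{k-1}$, giving $c_k^j<\tau+\frac{1}{3^{k-1}}<\sigma_{k-1}$ (hence $c_k^j<c$ and $c_k^j<x$), or $\tau>\sigma_{k-1}$, giving $c_k^j>\sigma_{k-1}+\frac{1}{3^{k-1}}$ (hence $c_k^j>c$ and $c_k^j>x$). Therefore, when $x_k=0$ the set $\{c_k^i:x\le c_k^i\}$ has minimum $c$, and when $x_k=2$ the set $\{c_k^i:c_k^i\le x\}$ has maximum $c$; in both cases $a_{n_k^x}=\sigma_{k-1}+\frac{1}{3^k}$ and $b_{n_k^x}=\sigma_{k-1}+\frac{2}{3^k}$. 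Substituting $\sigma_{k-1}=\sigma_k-\frac{x_k}{3^k}$ with $x_k=0$ or $x_k=2$ turns these into the two formulas of~(*).

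Finally, (**) is immediate from (*) and $\sigma_k\le x\le\sigma_k+\frac{1}{3^k}$: when $x_k=0$ one has $a_{n_k^x}=\sigma_k+\frac{1}{3^k}$ and $b_{n_k^x}=\sigma_k+\frac{2}{3^k}$, so $0\le a_{n_k^x}-x\le\frac{1}{3^k}$ and $\frac{1}{3^k}\le b_{n_k^x}-x\le\frac{2}{3^k}$; when $x_k=2$ one has $a_{n_k^x}=\sigma_k-\frac{1}{3^k}$ and $b_{n_k^x}=\sigma_k$, so $\frac{1}{3^k}\le x-a_{n_k^x}\le\frac{2}{3^k}$ and $0\le x-b_{n_k^x}\le\frac{1}{3^k}$. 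The one step that deserves care — the main, though still routine, obstacle — is the identification of $n_k^x$ with the gap $(c,d)$: it requires knowing that the surviving level-$(k-1)$ intervals are closed, disjoint, and therefore separated by their left endpoints, and this in turn rests only on the geometric tail bound $\sum_{n>j}\frac{2}{3^n}=\frac{1}{3^j}$.
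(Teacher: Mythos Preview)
Your proof is correct and follows essentially the same route as the paper's: both identify the level-$k$ gap sitting in $x$'s level-$(k-1)$ Cantor interval and verify that this gap is the one picked out by the definition of $n_k^x$, then read off (*) and deduce (**) from the tail bound. The paper organises the bookkeeping via the ordered set $A_k=\{w_k^1<\cdots<w_k^{2^k}\}$ of level-$k$ partial sums (noting $c_k^i=w_k^{2i-1}+3^{-k}$, $d_k^i=w_k^{2i}$) rather than via level-$(k-1)$ intervals, but the content is the same; if anything, your separation argument for why $c$ realises the min/max in the definition of $a_{n_k^x}$ is spelled out more carefully than in the paper.
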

%%%%%%%%%%%%%%%%%%%%%%%%%%%%%%%%%%%%
\begin{proof}
For each $k\in\mathbb N$ suppose
\[A_k=\left\{{\displaystyle\sum_{n=1}^k\frac{y_n}{3^n}}:y_1,\ldots,y_k\in\{0,2\}\right\}\:,\]
then we may suppose $A_k=\{w_k^1,\ldots,w_k^{2^k}\}$ with $w_k^1<w_k^2<\cdots<w_k^{2^k}$.
It is easy to see that:
\[\begin{array}{lcl}
c^1_k=w_k^1+{\displaystyle\sum_{n\geq k+1}\frac{2}{3^n}}=w_k^1+\dfrac1{3^k} & , & d^1_k=w_k^2 \\
c^2_k=w_k^3+{\displaystyle\sum_{n\geq k+1}\frac{2}{3^n}}=w_k^3+\dfrac1{3^k} & , & d^2_k=w_k^4 \\
\vdots && \\
c^i_k=w_k^{2i-1}+{\displaystyle\sum_{n\geq k+1}\frac{2}{3^n}}=w_k^{2i-1}+\dfrac1{3^k} & ,
    & d^i_k=w_k^{2i} \\
\vdots && \\
c^{2^{k-1}}_k=w_k^{2^k-1}+{\displaystyle\sum_{n\geq k+1}\frac{2}{3^n}}=w_k^{2^k-1}+\dfrac1{3^k}
    & , & d^{2^{k-1}}_k=w_k^{2^k}
\end{array}\]
so $(c^i_k,d^i_k)=(w_k^{2i-1}+\frac1{3^k},w_k^{2i})$.
\\
Now for $x={\displaystyle\sum_{n\in\mathbb N}\frac{x_n}{3^n}}\in M$ with
$x_1,x_2,\ldots\in\{0,2\}$ we have:
\begin{itemize}
\item For $p\in\mathbb N$ we have $x_p=0$ and $x_{p+1}=x_{p+2}=\cdots=2$
    if and only if there exists $i\in\{1,\ldots,2^{p-1}\}$ with $x=c^i_p$.
\item For $p\in\mathbb N$ we have $x_p=2$ and $x_{p+1}=x_{p+2}=\cdots=0$
    if and only if there exists $i\in\{1,\ldots,2^{p-1}\}$ with $x=d^i_p$.
\item $x\in K$ if and only if for all $p\in\mathbb N$ we have
    $p\notin\{c^i_p:1\leq i\leq2^{p-1}\}\cup\{d^i_p:1\leq i\leq2^{p-1}\}$
    (and $x\in M$).
\end{itemize}
In particular if $x_k=0$, then
$a_{n_k^x}={\displaystyle\sum_{n=1}^k\frac{x_n}{3^n}}+\dfrac1{3^k}$,
in other words if $w_k^i={\displaystyle\sum_{n=1}^k\frac{x_n}{3^n}}$,
then $i=2j-1$ is odd and
$a_{n_k^x}=w_k^{2j-1}+\dfrac1{3^k}=
{\displaystyle\sum_{n=1}^k\frac{x_n}{3^n}}+\dfrac1{3^k}=c^i_k$.
Also if $x_k=2$, then ${\displaystyle\sum_{n=1}^k\frac{x_n}{3^n}}\in A_k$
and there exists even $i=2j$ such that ${\displaystyle\sum_{n=1}^k\frac{x_n}{3^n}}=w_k^{2j}$,
moreover $b_{n_k^x}=w_k^{2j}$. So we have (*),
moreover considering the following inequalities will complete the proof:
\[|a_{n_k^x}-x|  \leq \left|a_{n_k^x}- {\displaystyle\sum_{n=1}^k\frac{x_n}{3^n}}\right|+
    {\displaystyle\sum_{n=k+1}^\infty\frac{x_n}{3^n}}
 \leq  \dfrac1{3^k}+{\displaystyle\sum_{n=k+1}^\infty\frac2{3^n}}=\dfrac2{3^k}\:,\]
 and:
\begin{eqnarray*}
|b_{n_k^x}-x| & = & \left|{\displaystyle\sum_{n=1}^{k-1}\frac{x_n}{3^n}}+\dfrac2{3^k}-x\right| \\
& = & \left|\dfrac{2-x_k}{3^k}-{\displaystyle\sum_{n=k+1}^\infty\frac{x_n}{3^n}}\right| \\
& = & \left\{\begin{array}{lc}
{\displaystyle\sum_{n=k+1}^\infty\frac{x_n}{3^n}}\leq
{\displaystyle\sum_{n=k+1}^\infty\frac{2}{3^n}}=\dfrac1{3^k} & x_k=2 \\
\dfrac{2}{3^k}-{\displaystyle\sum_{n=k+1}^\infty\frac{x_n}{3^n}}\leq\dfrac2{3^k} & x_k=0
\end{array}\right.
\end{eqnarray*}
which shows (**).
\end{proof}
%%%%%%%%%%%%%%%%%%%%%%%%%%%%%%%%%%%%
\begin{lemma}\label{sara1-20}
Let $x={\displaystyle\sum_{n\in\mathbb N}\frac{x_n}{3^n}}\in M$ with
$x_n\in\{0,2\}$ ($n\in\mathbb N$), then we have:
\begin{itemize}
\item[1.] ${\displaystyle\lim_{k\to\infty}a_{n_k^x}}=
    {\displaystyle\lim_{k\to\infty}b_{n_k^x}}=x$.
\item[2.] For $i<j$ if $x_i=x_j=0$, then
    $x\leq a_{n^x_j}<b_{n^x_j}<a_{n^x_i}<b_{n^x_i}$.
\item[3.] For $i<j$ if $x_i=x_j=2$, then $a_{n^x_i}<b_{n^x_i}<a_{n^x_j}<b_{n^x_j}\leq x$.
\end{itemize}
\end{lemma}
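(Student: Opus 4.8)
The plan is to reduce everything to the explicit formulas for $a_{n_k^x}$ and $b_{n_k^x}$ established in Lemma~\ref{note-cantor}. Throughout I would abbreviate $S_k := \sum_{n=1}^k \frac{x_n}{3^n}$ and use repeatedly the identity $\sum_{n>m}\frac{2}{3^n} = \frac1{3^m}$.

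For part (1), I would simply quote the estimates (**) of Lemma~\ref{note-cantor}: both $|a_{n_k^x}-x|$ and $|b_{n_k^x}-x|$ are at most $\frac{2}{3^k}$, which tends to $0$, so both sequences converge to $x$.

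For part (2), fix $i<j$ with $x_i=x_j=0$ and substitute the formulas (*): $a_{n_j^x}=S_j+\frac1{3^j}$, $b_{n_j^x}=S_j+\frac2{3^j}$, $a_{n_i^x}=S_i+\frac1{3^i}$, $b_{n_i^x}=S_i+\frac2{3^i}$. Three of the four required inequalities are immediate: $a_{n_j^x}<b_{n_j^x}$ and $a_{n_i^x}<b_{n_i^x}$ because $\frac1{3^k}<\frac2{3^k}$, and $x\le a_{n_j^x}$ because $x=S_j+\sum_{n>j}\frac{x_n}{3^n}\le S_j+\frac1{3^j}$. The substantive step is $b_{n_j^x}<a_{n_i^x}$: here I would write $S_j-S_i=\sum_{n=i+1}^{j-1}\frac{x_n}{3^n}+\frac{x_j}{3^j}$, use $x_j=0$ to drop the last summand, bound $\sum_{n=i+1}^{j-1}\frac{x_n}{3^n}\le\sum_{n=i+1}^{j-1}\frac2{3^n}=\frac1{3^i}-\frac1{3^{j-1}}=\frac1{3^i}-\frac3{3^j}$, and conclude $b_{n_j^x}=S_j+\frac2{3^j}\le S_i+\frac1{3^i}-\frac1{3^j}<S_i+\frac1{3^i}=a_{n_i^x}$ (the degenerate case $j=i+1$, where the middle sum is empty, is handled by the same inequalities). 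Part (3) is the mirror image: with $x_i=x_j=2$ one has $a_{n_k^x}=S_k-\frac1{3^k}$ and $b_{n_k^x}=S_k$ for $k\in\{i,j\}$, the bound $b_{n_j^x}=S_j\le x=S_j+\sum_{n>j}\frac{x_n}{3^n}$ is clear, and for $b_{n_i^x}<a_{n_j^x}$ I would use that $x_j=2$ forces $S_j-S_i=\sum_{n=i+1}^{j-1}\frac{x_n}{3^n}+\frac2{3^j}\ge\frac2{3^j}$, so $a_{n_j^x}=S_j-\frac1{3^j}\ge S_i+\frac1{3^j}>S_i=b_{n_i^x}$.

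I expect the only delicate point to be the strict nesting $b_{n_j^x}<a_{n_i^x}$ in (2) and $b_{n_i^x}<a_{n_j^x}$ in (3): the crude geometric estimate alone yields only a non-strict inequality, and it is precisely the hypothesis on the $j$-th ternary digit ($x_j=0$ in (2), $x_j=2$ in (3)) that supplies the extra $\frac1{3^j}$ of slack needed to make it strict. Everything else is routine manipulation of the formulas from Lemma~\ref{note-cantor}.
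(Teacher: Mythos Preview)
Your proposal is correct and follows essentially the same approach as the paper: both invoke the explicit formulas and estimates from Lemma~\ref{note-cantor} and reduce everything to elementary manipulation of partial ternary sums. The paper strings the inequalities of part~(2) into a single chain using the identity $\tfrac{1}{3^i}=\sum_{n>i}\tfrac{2}{3^n}$ and then truncates, whereas you isolate the one nontrivial inequality $b_{n_j^x}<a_{n_i^x}$ and prove it directly; this is only a cosmetic reorganization, and your explicit remark about the edge case $j=i+1$ is a welcome clarification.
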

%%%%%%%%%%%%%%%%%%%%%%%%%%%%%%%%%%%%
\begin{proof}
Use (**) in Lemma~\ref{note-cantor} in order to prove (1).
\\
2) Suppose $i<j$ and $x_i=x_j=0$, then by (*) in Lemma~\ref{note-cantor}
we have:
\begin{eqnarray*}
b_{n^x_i}& > & a_{n^x_i}={\displaystyle\sum_{n=1}^i\frac{x_n}{3^n}}+\frac1{3^i}
    ={\displaystyle\sum_{n=1}^{i}\frac{x_n}{3^n}}+
    {\displaystyle\sum_{n=i+1}^\infty\frac{2}{3^n}}
    > {\displaystyle\sum_{n=1}^{i}\frac{x_n}{3^n}}
    +{\displaystyle\sum_{n=i+1}^{j}\frac{2}{3^n}} \\
& > & {\displaystyle\sum_{n=1}^{i}\frac{x_n}{3^n}}+
    {\displaystyle\sum_{n=i+1}^{j-1}\frac{x_n}{3^n}}+\frac2{3^j}
    ={\displaystyle\sum_{n=1}^{j}\frac{x_n}{3^n}}+\frac2{3^j}
    =b_{n^x_j}>a_{n^x_j}={\displaystyle\sum_{n=1}^j\frac{x_n}{3^n}}+\frac1{3^j} \\
& = & {\displaystyle\sum_{n=1}^j\frac{x_n}{3^n}}+
    {\displaystyle\sum_{n=j+1}^\infty\frac{2}{3^n}}\geq
    {\displaystyle\sum_{n=1}^j\frac{x_n}{3^n}}+
    {\displaystyle\sum_{n=j+1}^\infty\frac{x_n}{3^n}}
    =x
\end{eqnarray*}
3) Use a similar method described in the proof of (2), to prove (3).
\end{proof}
%%%%%%%%%%%%%%%%%%%%%%%%%%%%%%%%%%%%
\begin{lemma}\label{sara10}
There exists a sequence $((p_n,q_n):n\in{\mathbb N})$ such that
for all $n,m\in\mathbb N$ we have:
\begin{itemize}
\item $0\leq p_n<q_n\leq1$, $f(p_n,q_n)=C_n\setminus\{0\}$ and
    $f(p_n)=f(q_n)=0$;
\item if
    $a_n<b_n<a_m<b_m$, then $p_n<q_n<p_m<q_m$.
\end{itemize}
\end{lemma}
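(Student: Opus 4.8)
The plan is to relate, for each finite $S\subseteq\mathbb N$, the loop $f$ to a loop in the finite earring $\bigcup_{n\in S}C_n$, read off the required intervals from a relabelling of Lemma~\ref{bazbazlem}, and then glue the finitely many pieces together into one sequence by a compactness argument over $S$. Call an interval $(a,b)$ \emph{good for} $n$ if $f(a,b)=C_n\setminus\{0\}$ and $f(a)=f(b)=0$. First I would record that the set $G_n$ of intervals good for $n$ is finite and nonempty for every $n$: by Note~\ref{Narges4}(1) the loops $f^{\{n\}},g^{\{n\}}:[0,1]\to C_n$ are homotopic, and $g^{\{n\}}$ runs once around $C_n$ on $(a_n,b_n)$ and is $0$ elsewhere, so it is not null-homotopic; hence Lemma~\ref{lem} (used exactly as in the proof of Note~\ref{Narges4}(2), where $f=f^{\{n\}}$ off the relevant interval lets one pass from $f^{\{n\}}$ back to $f$) supplies a good interval. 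Two good intervals for $n$ must be disjoint, since an endpoint of one lying strictly inside the other would be a point where $f=0$ inside a set on which $f$ avoids $0$; and by uniform continuity of $f$ each good interval has length at least a fixed $\delta_n>0$, because $f$ maps it densely onto $C_n$, a circle of diameter $2/n$. Finitely many pairwise disjoint intervals of length $\geq\delta_n$ fit in $[0,1]$, so $G_n$ is finite.

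Next I would establish the finite step. Fix $S=\{n_1,\dots,n_k\}$ and let $\sigma$ be the permutation with $(a_{\sigma(1)},b_{\sigma(1)})<\dots<(a_{\sigma(k)},b_{\sigma(k)})$ (the removed Cantor intervals being pairwise disjoint, their left-to-right order is total). By Note~\ref{Narges4}(1), $f^S$ and $g^S$ are homotopic loops in $\bigcup_{n\in S}C_n$, a wedge of $k$ circles; and $g^S$ traverses $C_{\sigma(1)},\dots,C_{\sigma(k)}$ once each, in that order, being $0$ in between, so it represents the reduced word $x_{\sigma(1)}\cdots x_{\sigma(k)}$ in the free group $\pi_1(\bigcup_{n\in S}C_n)$. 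The argument of Lemma~\ref{bazbazlem} uses nothing but freeness of this group on the generators of the $k$ circles together with the fact that the reference loop represents a reduced word in them; running it with $x_{\sigma(1)}\cdots x_{\sigma(k)}$ yields $s_1<t_1\le s_2<t_2\le\dots\le s_k<t_k$ with $f^S(s_j)=f^S(t_j)=0$ and $f^S(s_j,t_j)=C_{\sigma(j)}\setminus\{0\}$. Since $f^S\ne 0$ on $(s_j,t_j)$, $f$ agrees with $f^S$ there, so $(s_j,t_j)\in G_{\sigma(j)}$. Thus there is a choice $n\mapsto I_n\in G_n$ $(n\in S)$ that is order-preserving: $I_n$ lies to the left of $I_{n'}$ whenever $(a_n,b_n)<(a_{n'},b_{n'})$, for $n,n'\in S$.

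Finally I would assemble the sequence. The product $\prod_{n\in\mathbb N}G_n$ of these finite sets is compact; for each finite $S$ the set of selectors whose restriction to $S$ is order-preserving is closed (it constrains only the coordinates in $S$) and nonempty by the finite step, and these sets have the finite intersection property, since a selector that is order-preserving on $S_1\cup S_2$ is order-preserving on $S_1$ and on $S_2$. Choosing a selector $n\mapsto(p_n,q_n)$ in their common intersection, each $(p_n,q_n)\in G_n$ is a genuine interval with $0\le p_n<q_n\le 1$, $f(p_n,q_n)=C_n\setminus\{0\}$, $f(p_n)=f(q_n)=0$. If $(a_n,b_n)<(a_m,b_m)$, pick (using that the removed Cantor intervals are dense in their own order) an index $k$ with $(a_n,b_n)<(a_k,b_k)<(a_m,b_m)$; order-preservation on $\{n,k,m\}$ and $p_k<q_k$ give $p_n<q_n\le p_k<q_k\le p_m<q_m$, hence in particular $p_n<q_n<p_m<q_m$, as required.

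The crux is the gluing: Lemma~\ref{bazbazlem} produces intervals depending on $S$, with nothing canonical to pin them down across different $S$, so some compactness device is unavoidable, and it is available only because the $G_n$ are finite — which is precisely where uniform continuity and the disjointness of good intervals enter. The sharpening of the inequalities from $\le$ to $<$ is where the density of the removed Cantor intervals is used. Everything else — collapsing to $\bigcup_{n\in S}C_n$, the relabelled Lemma~\ref{bazbazlem}, and replacing $f^S$ by $f$ on the good subintervals — is routine.
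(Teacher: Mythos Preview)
Your argument is correct and follows essentially the same three-stage architecture as the paper: (i) each $G_n$ (the paper's $\Gamma_n$) is finite and nonempty, (ii) for each finite initial segment one extracts an order-preserving selector via Lemma~\ref{bazbazlem}, and (iii) a compactness device glues these into a single sequence. The paper carries out (iii) by an explicit K\"onig-style induction, choosing at each stage a $(p_m,q_m)\in\Gamma_m$ that extends to arbitrarily large $k$; you instead invoke the compactness of $\prod_n G_n$ and the finite intersection property, which is the same idea packaged more abstractly. One point in your favour: the paper's proof, as written, only yields $q_n\le p_m$ from Lemma~\ref{bazbazlem}, and does not explain the strict inequality $q_n<p_m$ claimed in the statement; your interpolation of an intermediate removed interval $(a_k,b_k)$ between $(a_n,b_n)$ and $(a_m,b_m)$ is exactly the missing step and is a genuine improvement.
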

%%%%%%%%%%%%%%%%%%%%%%%%%%%%%%%%%%%%
\begin{proof}
For all $n\in{\mathbb N}$, by Note~\ref{Narges4} we have
$f^{\{n\}},g^{\{n\}}:[0,1]\to C_n$ are homotopic loops, therefore
$f^{\{n\}}:[0,1]\to C_n$ is not null-homotopic. By
Lemma~\ref{lem} there exist $a,b\in[0,1]$ with
$f^{\{n\}}(a,b)=C_n\setminus\{0\}$ and
$f^{\{n\}}(a)=f^{\{n\}}(b)=0$, therefore
$f(a,b)=C_n\setminus\{0\}$ and $f(a)=f(b)=0$. On the other hand
$f:[0,1]\to\mathcal X$ is uniformly continuous, thus
\[\Gamma_n:=\{(a,b)\in[0,1]\times[0,1]:
f(a,b)=C_n\setminus\{0\},f(a)=f(b)=0\}\]
is a finite nonempty set. For $k\in\mathbb N$,
by considering $f^{\{1,\ldots,k\}}:[0,1]\to C_1\cup\cdots\cup C_k$,
Note~\ref{Narges4} and Lemma~\ref{bazbazlem} there exist
$(u_1,v_1)\in\Gamma_1,\ldots,(u_k,v_k)\in\Gamma_k$
such that if $a_i<b_i<a_j<b_j$, then $u_i<v_i\leq u_j<v_j$
for all $i,j\in\{1,\ldots,k\}$.
\\
Using the above mentioned note and finiteness of $\Gamma_1$,
there exists $(p_1,q_1)\in\Gamma_1$ such that
$\sup\{k\in{\mathbb N}:$ there exist $u_2,v_2,u_3,v_3,\ldots,u_k,v_k\in[0,1]$
such that for $u_1=p_1$ and $v_1=q_1$ and all $i,j\in\{1,\ldots,k\}$ we have $(u_i,v_i)\in\Gamma_i$ and if
$a_i<b_i<a_j<b_j$, then $u_i<v_i\leq u_j<v_j\}=\infty$.
\\
For $m\in\mathbb N$ if $(p_1,q_1)\in\Gamma_1,...,(p_m,q_m)\in\Gamma_m$ are
such that $\sup\{k\in{\mathbb N}:$ there exist $u_{m+1},v_{m+1},u_{m+2},v_{m+2},
\ldots,u_k,v_k\in[0,1]$
such that for $u_1=p_1,v_1=q_1,u_2=p_2,v_2=q_2,\ldots,u_m=p_m,v_m=q_m$
for all $i,j\in\{1,\ldots,k\}$ we have $(u_i,v_i)\in\Gamma_i$ and if
$a_i<b_i<a_j<b_j$, then $u_i<v_i\leq u_j<v_j\}=\infty$.
Since $\Gamma_{m+1}$ is finite, there exists
$(p_{m+1},q_{m+1})\in\Gamma_{m+1}$ such that
$\sup\{k\in{\mathbb N}:$ there exist $u_{m+2},v_{m+2},u_{m+3},v_{m+3},
\ldots,u_k,v_k\in[0,1]$
such that for $u_1=p_1,v_1=q_1,u_2=p_2,v_2=q_2,\ldots,u_{m+1}=p_{m+1},v_{m+1}=q_{m+1}$
for all $i,j\in\{1,\ldots,k\}$ we have $(u_i,v_i)\in\Gamma_i$ and if
$a_i<b_i<a_j<b_j$, then $u_i<v_i\leq u_j<v_j\}=\infty$.
\\
The sequence $((p_n,q_n):n\in\mathbb N)$ is our desired sequence.
\end{proof}
%%%%%%%%%%%%%%%%%%%%%%%%%%%%%%%%%%%%
\begin{lemma}\label{sara20}
Let $x={\displaystyle\sum_{n\in\mathbb N}\frac{x_n}{3^n}}\in K(\subset M)$ with
$x_n\in\{0,2\}$ ($n\in\mathbb N$),
and
\[E^x=\{n\in{\mathbb N}:x_n=0\}=\{u_k:k\in\mathbb N\}\:,\]
\[F^x=\{n\in{\mathbb N}:x_n=2\}=\{v_k:k\in\mathbb N\}\]
such that $u_1<u_2<\cdots$ and $v_1<v_2<\cdots$, and consider the sequence
$((p_n,q_n):n\in\mathbb N)$ as in Lemma~\ref{sara10},
then we have:
\\
1. The sequences $\{a_{n^x_{u_k}}:k\in\mathbb N\}$
        and $\{b_{n^x_{u_k}}:k\in\mathbb N\}$
        are strictly decreasing to $x$.
\\
2. The sequences $\{a_{n^x_{v_k}}:k\in\mathbb N\}$
        and $\{b_{n^x_{v_k}}:k\in\mathbb N\}$
        are strictly increasing to $x$.
\\
3. The sequences $\{p_{n^x_{u_k}}:k\in\mathbb N\}$
        and $\{q_{n^x_{u_k}}:k\in\mathbb N\}$
        are strictly decreasing.
\\
4. The sequences
        $\{p_{n^x_{v_k}}:k\in\mathbb N\}$ and
        $\{q_{n^x_{v_k}}:k\in\mathbb N\}$
        are strictly increasing.
\\
5.
    ${\displaystyle\lim_{k\to\infty}p_{n^x_{v_k}}
        =\lim_{k\to\infty}q_{n^x_{v_k}}}\leq
        {\displaystyle\lim_{k\to\infty}p_{n^x_{u_k}}
        =\lim_{k\to\infty}q_{n^x_{u_k}}}$.
\end{lemma}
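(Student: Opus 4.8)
The plan is to derive all five items purely from the Cantor-set combinatorics already isolated in Lemmas~\ref{note-cantor}, \ref{sara1-20} and \ref{sara10}: the first two lemmas control the removed intervals $(a_{n},b_{n})$, and the order-preservation clause of Lemma~\ref{sara10} transports every inequality among the $a$'s and $b$'s to the corresponding inequality among the $p$'s and $q$'s.

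First I would prove (1) and (2). Fix $x=\sum_{n}x_{n}/3^{n}\in K$ and recall $u_{1}<u_{2}<\cdots$ enumerates $E^{x}$. Since $x_{u_{k}}=0$ for all $k$, Lemma~\ref{sara1-20}(2) applied to the pair $u_{k}<u_{k+1}$ gives
\[x\leq a_{n^{x}_{u_{k+1}}}<b_{n^{x}_{u_{k+1}}}<a_{n^{x}_{u_{k}}}<b_{n^{x}_{u_{k}}},\]
so $(a_{n^{x}_{u_{k}}})_{k}$ and $(b_{n^{x}_{u_{k}}})_{k}$ are strictly decreasing and bounded below by $x$; as subsequences of the sequences in Lemma~\ref{sara1-20}(1) they converge to $x$, which is (1). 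Item (2) is the mirror argument using Lemma~\ref{sara1-20}(3) with the increasing enumeration $v_{1}<v_{2}<\cdots$ of $F^{x}$, giving $a_{n^{x}_{v_{k}}}<b_{n^{x}_{v_{k}}}<a_{n^{x}_{v_{k+1}}}<b_{n^{x}_{v_{k+1}}}\leq x$ and hence strictly increasing sequences converging to $x$.

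Items (3) and (4) then follow immediately: reading off the display, for $n=n^{x}_{u_{k+1}}$ and $m=n^{x}_{u_{k}}$ we have $a_{n}<b_{n}<a_{m}<b_{m}$, so the second bullet of Lemma~\ref{sara10} yields $p_{n}<q_{n}<p_{m}<q_{m}$, i.e.\ $p_{n^{x}_{u_{k+1}}}<q_{n^{x}_{u_{k+1}}}<p_{n^{x}_{u_{k}}}<q_{n^{x}_{u_{k}}}$; hence both $(p_{n^{x}_{u_{k}}})_{k}$ and $(q_{n^{x}_{u_{k}}})_{k}$ strictly decrease, and the same computation with the $v_{k}$ (inequalities reversed) shows $(p_{n^{x}_{v_{k}}})_{k}$ and $(q_{n^{x}_{v_{k}}})_{k}$ strictly increase.

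For (5), all four sequences are monotone and lie in $[0,1]$, so all four limits exist. By (3) the chain $\cdots<p_{n^{x}_{u_{2}}}<q_{n^{x}_{u_{2}}}<p_{n^{x}_{u_{1}}}<q_{n^{x}_{u_{1}}}$ interleaves the two decreasing sequences, forcing $\lim_{k}p_{n^{x}_{u_{k}}}=\lim_{k}q_{n^{x}_{u_{k}}}$, and by (4) the chain $p_{n^{x}_{v_{1}}}<q_{n^{x}_{v_{1}}}<p_{n^{x}_{v_{2}}}<q_{n^{x}_{v_{2}}}<\cdots$ interleaves the two increasing sequences, forcing $\lim_{k}p_{n^{x}_{v_{k}}}=\lim_{k}q_{n^{x}_{v_{k}}}$. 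It remains to compare the two limits, and this is the only place where $x\in K$ (not merely $x\in M$) is used: since $F^{x}$ is infinite, formula (*) of Lemma~\ref{note-cantor} gives $b_{n^{x}_{v_{k}}}<x$ for every $k$, and since $E^{x}$ is infinite it gives $x<a_{n^{x}_{u_{j}}}$ for every $j$; thus each interval $(a_{n^{x}_{v_{k}}},b_{n^{x}_{v_{k}}})$ lies strictly left of $x$ and each $(a_{n^{x}_{u_{j}}},b_{n^{x}_{u_{j}}})$ strictly right of it, so
\[a_{n^{x}_{v_{k}}}<b_{n^{x}_{v_{k}}}<a_{n^{x}_{u_{j}}}<b_{n^{x}_{u_{j}}}\qquad(j,k\in\mathbb{N}).\]
Lemma~\ref{sara10} then gives $q_{n^{x}_{v_{k}}}<p_{n^{x}_{u_{j}}}$ for all $j,k$; letting $k\to\infty$ and then $j\to\infty$ gives $\lim_{k}q_{n^{x}_{v_{k}}}\leq\lim_{j}p_{n^{x}_{u_{j}}}$, which with the two equalities above is exactly (5). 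The whole argument is bookkeeping; the only subtle point, and the reason the hypothesis is $x\in K$, is establishing the strict separation $b_{n^{x}_{v_{k}}}<x<a_{n^{x}_{u_{j}}}$ that licenses this last use of the order-preserving property of Lemma~\ref{sara10}.
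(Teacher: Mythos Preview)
Your proof is correct and follows essentially the same route as the paper's: items (1)--(4) are obtained from Lemma~\ref{sara1-20} together with the order-preservation clause of Lemma~\ref{sara10}, and item (5) by interleaving the monotone sequences and comparing across the point $x$. Your treatment of (5) is in fact more careful than the paper's, since you explicitly justify the strict separation $b_{n^{x}_{v_{k}}}<x<a_{n^{x}_{u_{j}}}$ via formula~(*) and the infinitude of $E^{x},F^{x}$, whereas the paper simply asserts it.
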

%%%%%%%%%%%%%%%%%%%%%%%%%%%%%%%%%%%%
\begin{proof}$\:$ \\
Use Lemma~\ref{sara1-20} in order to prove (1) and (2).
\\
3) By Lemma~\ref{sara1-20} (2), for all $k\in\mathbb N$ we have
\[a_{n^x_{u_{k+1}}}<b_{n^x_{u_{k+1}}}
<a_{n^x_{u_k}}<b_{n^x_{u_k}}\:,\]
which leads to
$p_{n^x_{u_{k+1}}}<q_{n^x_{u_{k+1}}}
<p_{n^x_{u_k}}<q_{n^x_{u_k}}$.
\\
4) Lemma~\ref{sara1-20} (3), for all $k\in\mathbb N$ we have
\[a_{n^x_{v_k}}<b_{n^x_{v_k}}
<a_{n^x_{v_{k+1}}}<b_{n^x_{v_{k+1}}}\:,\]
which leads to
$p_{n^x_{v_k}}<q_{n^x_{v_k}}
<p_{n^x_{v_{k+1}}}<q_{n^x_{v_{k+1}}}$.
\\
5) Using (3) and (4), we have
${\displaystyle\lim_{k\to\infty}p_{n^x_{u_k}}
=\lim_{k\to\infty}q_{n^x_{u_k}}}$
and ${\displaystyle\lim_{k\to\infty}p_{n^x_{v_k}}
=\lim_{k\to\infty}q_{n^x_{v_k}}}$. On
the other hand for all $k\in\mathbb N$ we have
$a_{n^x_{v_k}}<b_{n^x_{v_k}}<x<a_{n^x_{u_k}}<b_{n^x_{u_k}}$, thus
\[p_{n^x_{v_k}}<q_{n^x_{v_k}}<p_{n^x_{u_k}}<q_{n^x_{u_k}}\:,\]
which leads to
${\displaystyle\lim_{k\to\infty}p_{n^x_{v_k}}
\leq\lim_{k\to\infty}p_{n^x_{u_k}}}$.
\end{proof}
%%%%%%%%%%%%%%%%%%%%%%%%%%%%%%%%%%%%
\begin{lemma}\label{sara30}
For $x={\displaystyle\sum_{n\in\mathbb N}\frac{x_n}{3^n}}\in K$ with $x_n\in\{0,2\}$
and $E^x=\{n\in{\mathbb N}:x_n=0\}=\{u_k:k\in\mathbb N\}$ with $u_1<u_2<\cdots$
under the same notations as in Lemma~\ref{sara10}, by Lemma~\ref{sara20},
$\{p_{n^x_{u_k}}:k\in\mathbb N\}$ is an strictly decreasing sequence (in $[0,1]$).
Let $\eta(x)={\displaystyle\lim_{k\to\infty}p_{n^x_{u_k}}}$, then
$\eta:K\to[0,1]$ is strictly increasing, and
for all $x\in K$ we have $f(\eta(x))=0$.
\end{lemma}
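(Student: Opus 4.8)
The plan is to establish the three parts of the lemma in turn: well-definedness of $\eta$, the value $f(\eta(x))=0$, and strict monotonicity of $\eta$.

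For the first two: by Lemma~\ref{sara20}(3) the sequence $(p_{n^x_{u_k}})_{k\in\mathbb N}$ is strictly decreasing inside $[0,1]$, hence converges, and we set $\eta(x):=\lim_{k\to\infty}p_{n^x_{u_k}}\in[0,1]$; the chain $p_{n^x_{u_{k+1}}}<q_{n^x_{u_{k+1}}}<p_{n^x_{u_k}}<q_{n^x_{u_k}}$ from the proof of Lemma~\ref{sara20}(3) shows that $(p_{n^x_{u_k}})_k$ and $(q_{n^x_{u_k}})_k$ interlace, so they share the limit $\eta(x)=\lim_k q_{n^x_{u_k}}$, and in particular $\eta(x)<p_{n^x_{u_k}}$ for every $k$. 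Next, Lemma~\ref{sara10} gives $f(p_n)=0$ for every $n\in\mathbb N$, so $f(p_{n^x_{u_k}})=0$ for all $k$; since $f$ is continuous and $p_{n^x_{u_k}}\to\eta(x)$, passing to the limit yields $f(\eta(x))=0$.

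For strict monotonicity, let $x={\sum_{n\in\mathbb N}\frac{x_n}{3^n}}<y={\sum_{n\in\mathbb N}\frac{y_n}{3^n}}$ be points of $K$ and let $m$ be the least index with $x_m\neq y_m$, so $x_n=y_n$ for $n<m$; since $x<y$ one checks that necessarily $x_m=0$ and $y_m=2$, whence $m\in E^x$ and $m\in F^y$. The key point is that $n^x_m=n^y_m$: by Lemma~\ref{note-cantor}, $a_{n^x_m}=\sum_{n=1}^m\frac{x_n}{3^n}+\frac1{3^m}=\sum_{n=1}^{m-1}\frac{x_n}{3^n}+\frac1{3^m}$ because $x_m=0$, while $a_{n^y_m}=\sum_{n=1}^m\frac{y_n}{3^n}-\frac1{3^m}=\sum_{n=1}^{m-1}\frac{y_n}{3^n}+\frac1{3^m}$ because $y_m=2$; these coincide since $x_n=y_n$ for $n<m$, and as the intervals $(a_n,b_n)$ are pairwise disjoint an index is determined by its left endpoint, so $n^x_m=n^y_m=:N$. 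Now write $m=u_j$ in the increasing enumeration $E^x=\{u_k\}$ and $m=v_l$ in the increasing enumeration $F^y=\{v_k\}$. From the second paragraph, $\eta(x)<p_{n^x_{u_j}}=p_N$. On the other hand $(q_{n^y_{v_k}})_k$ is strictly increasing by Lemma~\ref{sara20}(4) applied to $y$, with limit at most $\eta(y)$ by Lemma~\ref{sara20}(5) applied to $y$, so $q_N=q_{n^y_{v_l}}<\lim_k q_{n^y_{v_k}}\le\eta(y)$. Since $p_N<q_N$ by Lemma~\ref{sara10}, we obtain $\eta(x)<p_N<q_N<\eta(y)$, i.e. $\eta(x)<\eta(y)$, as required.

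The routine parts are well-definedness and the continuity argument for $f(\eta(x))=0$; the one place demanding care is the first-difference analysis, whose moral is that the level-$m$ Cantor gap $N$ separating $x$ from $y$ plays two roles at once --- it is an $E^x$-gap for $x$, pushing $\eta(x)$ strictly below its left endpoint $p_N$, and an $F^y$-gap for $y$, pushing $\eta(y)$ strictly above its right endpoint $q_N$. Once this is recognized, the monotone-sequence facts of Lemma~\ref{sara20} close the argument with no further computation.
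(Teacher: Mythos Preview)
Your proof is correct, but it takes a genuinely different route from the paper's. The paper argues entirely within the $E$-sets: writing $E^y=\{u'_k\}$, it observes that $\{a_{n^x_{u_k}}\}$ decreases to $x$ and $\{a_{n^y_{u'_k}}\}$ decreases to $y$, so for some $m$ the whole tail $a_{n^x_{u_k}}$ ($k\ge m$) lies below $y\le a_{n^y_{u'_j}}$; the order-preserving property of Lemma~\ref{sara10} then transfers this block of $a,b$-inequalities to the $p,q$-side, yielding $\eta(x)\le p_{n^x_{u_{m+1}}}<p_{n^x_{u_m}}\le\eta(y)$. Your argument instead isolates the first ternary digit where $x$ and $y$ differ, recognizes that the corresponding Cantor gap has the \emph{same} index $N=n^x_m=n^y_m$ for both points, and then splits the work asymmetrically: $N$ is an $E^x$-gap, so $\eta(x)<p_N$ via Lemma~\ref{sara20}(3), while $N$ is an $F^y$-gap, so $q_N<\eta(y)$ via Lemma~\ref{sara20}(4,5). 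The paper's approach is softer---it needs only convergence and never touches $F^y$---whereas yours is sharper and more geometric, pinning down the exact gap $(p_N,q_N)$ that separates $\eta(x)$ from $\eta(y)$ and exploiting parts (4) and (5) of Lemma~\ref{sara20} that the paper's own proof of Lemma~\ref{sara30} does not use.
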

%%%%%%%%%%%%%%%%%%%%%%%%%%%%%%%%%%%%
\begin{proof}
Consider $x,y\in K$ with $x<y$. Suppose
$x={\displaystyle\sum_{n\in\mathbb N}\frac{x_n}{3^n}}$ and
$y={\displaystyle\sum_{n\in\mathbb N}\frac{x_n}{3^n}}$ with $x_n,y_n\in\{0,2\}$
for all $n\in\mathbb N$. Let
\[E^x=\{n\in{\mathbb N}:x_n=0\}=\{u_k:k\in\mathbb N\}\:,\]
\[E^y=\{n\in{\mathbb N}:y_n=0\}=\{u'_k:k\in\mathbb N\}\:,\]
with
\[u_1<u_2<\cdots\SP{\rm and}\SP u'_1<u'_2<\cdots\SP.\]
By Lemma~\ref{sara20} (1), $\{a_{n_{u_k}^x}:k\in\mathbb N\}$ is a strictly
decreasing sequence to $x$, and $\{a_{n_{u'_k}^x}:k\in\mathbb N\}$ is a strictly
decreasing sequence to $y$. Since $x<y$, there exists
$m\in\mathbb N$ such that
\[x\leq\cdots<a_{n_{u_{m+2}}^x}<a_{n_{u_{m+1}}^x}<a_{n_{u_m}^x}<y
\leq\cdots<a_{n_{u'_{m+2}}^y}<a_{n_{u'_{m+1}}^y}<a_{n_{u'_m}^y}\:.\]
Thus
\begin{eqnarray*}
x & \leq & \cdots<a_{n_{u_{m+2}}^x}<b_{n_{u_{m+2}}^x}<a_{n_{u_{m+1}}^x}<
        b_{n_{u_{m+1}}^x}<a_{n_{u_m}^x}<b_{n_{u_{m}}^x} \\
& < & y\leq\cdots<a_{n_{u'_{m+2}}^y}<b_{n_{u'_{m+2}}^y}<a_{n_{u'_{m+1}}^y}
    <b_{n_{u'_{m+1}}^y}<a_{n_{u'_m}^y}<b_{n_{u'_{m}}^y}\:.
\end{eqnarray*}
Using Lemma~\ref{sara10} we have:
\[\begin{array}{c}
\cdots<p_{n_{u_{m+2}}^x}<q_{n_{u_{m+2}}^x}<p_{n_{u_{m+1}}^x}<
        q_{n_{u_{m+1}}^x}<p_{n_{u_m}^x}<q_{n_{u_{m}}^x} \\
<\cdots<p_{n_{u'_{m+2}}^y}<q_{n_{u'_{m+2}}^y}<p_{n_{u'_{m+1}}^y}
    <q_{n_{u'_{m+1}}^y}<p_{n_{u'_m}^y}<q_{n_{u'_{m}}^y}\:.
\end{array}\]
Therefore
\[\eta(x)={\displaystyle\lim_{k\to\infty}p_{n^x_{u_k}}}\leq p_{n^x_{u_{m+1}}}
< p_{n^x_{u_m}}\leq {\displaystyle\lim_{k\to\infty}p_{n^y_{u'_k}}}=\eta(y)\:,\]
and $\eta:K\to[0,1]$ is strictly increasing.
Since $f(p_{n^x_{u_k}})=0$ for all $k\in\mathbb N$ and $f$ is continuous,
we have $f(\eta(x))=0$.
\end{proof}
%%%%%%%%%%%%%%%%%%%%%%%%%%%%%%%%%%%%
\begin{lemma}\label{sara40}
$|f^{-1}(0)|\geq c$ and $f$ is not a $c-$arc.
\end{lemma}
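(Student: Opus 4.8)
The plan is to exploit the strictly increasing map $\eta\colon K\to[0,1]$ produced in Lemma~\ref{sara30}, for which $f(\eta(x))=0$ holds for every $x\in K$. Being strictly increasing, $\eta$ is injective, so $\eta(K)$ is a subset of $f^{-1}(0)$ of cardinality $|K|$; hence $|f^{-1}(0)|\geq|K|$. Everything then reduces to computing $|K|$.

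For that, recall $K=\{x\in M:E^x\text{ and }F^x\text{ are infinite}\}$, where $M$ is the classical Cantor set, which has cardinality $c$. An element $x=\sum_{n\geq1}x_n/3^n\in M$ (with $x_n\in\{0,2\}$) fails to lie in $K$ exactly when the sequence $(x_n)_n$ is eventually $0$ or eventually $2$, i.e.\ when $E^x$ is finite or $F^x$ is finite. There are only countably many eventually-constant $\{0,2\}$-sequences, so $M\setminus K$ is countable; consequently $|K|=|M|=c$. Combining with the previous paragraph, $|f^{-1}(0)|\geq c$, and since $f^{-1}(0)\subseteq[0,1]$ forces $|f^{-1}(0)|\leq c$, we get $|f^{-1}(0)|=c$.

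Finally, I would observe that a $c$-arc, being a $c\frac{\{\varnothing\}}{}$map with $c$ infinite, is by Definition a continuous $h\colon[0,1]\to\mathcal X$ with $|h^{-1}(x)|<c$ for \emph{every} $x\in\mathcal X$. Since $|f^{-1}(0)|=c\not<c$, the loop $f$ cannot be a $c$-arc. There is no real obstacle remaining at this stage: all the delicate work --- building the order-compatible family $((p_n,q_n))_n$ and threading the nested Cantor intervals through it to define $\eta$ --- has already been done in Lemmas~\ref{sara10}, \ref{sara1-20}, \ref{sara20} and \ref{sara30}; the only points requiring a little care here are that $\eta$ is genuinely injective (so distinct points of $K$ yield distinct zeros of $f$) and the elementary cardinality count $|M\setminus K|\leq\omega$.
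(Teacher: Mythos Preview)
Your proof is correct and follows essentially the same route as the paper: invoke the injective map $\eta:K\to f^{-1}(0)$ from Lemma~\ref{sara30} to get $|f^{-1}(0)|\geq|K|=c$, and conclude that $f$ is not a $c$-arc. You supply slightly more detail than the paper does (the explicit countability argument for $M\setminus K$ and the unpacking of the definition of $c$-arc), but the underlying argument is identical.
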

%%%%%%%%%%%%%%%%%%%%%%%%%%%%%%%%%%%%
\begin{proof}
Consider $\eta:K\to[0,1]$ as in Lemma~\ref{sara30}.
By Lemma~\ref{sara30} we have
$|f^{-1}(0)|\geq|\eta(K)|$ and by
Lemma~\ref{sara20} $\eta$ is one to one, therefore
$|\eta(K)|=|K|=c$. Thus $|f^{-1}(0)|\geq c$ and $f$ is not a $c-$arc.
\end{proof}
%%%%%%%%%%%%%%%%%%%%%%%%%%%%%%%%%%%%
\begin{theorem}\label{taha20}
We have
\[{\mathfrak P}^c(\mathcal X)\subset\pi_1(\mathcal X)\:, \:
\mathfrak P_{\mathcal P_{fin}(\mathcal X)}^\omega
(\mathcal X)\not\subseteq{\mathfrak P}^c(\mathcal X)\:,\]
\[\mathfrak P^\omega
(\mathcal X)\subset\mathfrak P_{\mathcal P_{fin}(\mathcal X)}^\omega
(\mathcal X)\:{\rm and}\:\mathfrak P_{\mathcal P_{fin}(\mathcal X)}^\omega
(\mathcal X)\not\subseteq\mathfrak P^c
(\mathcal X)\:.\]
\end{theorem}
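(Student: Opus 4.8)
The plan is to produce a single homotopy class, namely $[g]$ for the loop $g:[0,1]\to\mathcal X$ fixed throughout this section, and show that it simultaneously certifies all four assertions. The two asserted inclusions are immediate and do not depend on $g$: $\mathfrak P^c(\mathcal X)\subseteq\pi_1(\mathcal X)$ holds by Definition~\ref{salam}, and $\mathfrak P^\omega(\mathcal X)=\mathfrak P^\omega_{\{\varnothing\}}(\mathcal X)\subseteq\mathfrak P^\omega_{\mathcal P_{fin}(\mathcal X)}(\mathcal X)$ holds by Theorem~\ref{Narges3}(2), since $\{\varnothing\}\subseteq\mathcal P_{fin}(\mathcal X)$. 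So everything reduces to two things about $[g]$: that it lies in $\mathfrak P^\omega_{\mathcal P_{fin}(\mathcal X)}(\mathcal X)$, and that it lies outside $\mathfrak P^c(\mathcal X)$ (and hence, since $\mathfrak P^\omega(\mathcal X)\subseteq\mathfrak P^c(\mathcal X)$ again by Theorem~\ref{Narges3}(2), also outside $\mathfrak P^\omega(\mathcal X)$).

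First I would verify $[g]\in\mathfrak P^\omega_{\mathcal P_{fin}(\mathcal X)}(\mathcal X)$. The map $g$ takes the value $0$ off $\bigcup_{n}(a_n,b_n)$, and on each $(a_n,b_n)$ it traverses $C_n$ exactly once and injectively; since the circles $C_n$ pairwise meet only at $0$, for any $x\in\mathcal X\setminus\{0\}$ the fibre $g^{-1}(x)$ meets at most one interval $(a_n,b_n)$ and meets it in at most one point, so $|g^{-1}(x)|$ is finite for every $x\neq 0$. Taking $A=\{0\}\in\mathcal P_{fin}(\mathcal X)$ then shows that $g$ is an $\omega\frac{\mathcal P_{fin}(\mathcal X)}{}$loop with base point $0$, whence $[g]\in\mathfrak P^\omega_{\mathcal P_{fin}(\mathcal X)}(\mathcal X)$.

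Next I would show $[g]\notin\mathfrak P^c(\mathcal X)$, which is where the section's lemmas do the real work. If $[g]\in\mathfrak P^c(\mathcal X)$ then, by Note~\ref{Narges2}, there is a $c$-loop $f:[0,1]\to\mathcal X$ with base point $0$ homotopic to $g$; but every loop $f$ homotopic to $g$ with $f(0)=f(1)=0$ satisfies $|f^{-1}(0)|\geq c$ by Lemma~\ref{sara40}, so such an $f$ is not a $c$-arc — a contradiction. Hence $[g]\in\pi_1(\mathcal X)\setminus\mathfrak P^c(\mathcal X)$, which gives the strict inclusion $\mathfrak P^c(\mathcal X)\subset\pi_1(\mathcal X)$; together with $[g]\in\mathfrak P^\omega_{\mathcal P_{fin}(\mathcal X)}(\mathcal X)$ it gives $\mathfrak P^\omega_{\mathcal P_{fin}(\mathcal X)}(\mathcal X)\not\subseteq\mathfrak P^c(\mathcal X)$ (this non-inclusion is listed twice among the four assertions, and the same class $[g]$ witnesses it both times); and since $\mathfrak P^\omega(\mathcal X)\subseteq\mathfrak P^c(\mathcal X)$, it also gives $[g]\notin\mathfrak P^\omega(\mathcal X)$, so $\mathfrak P^\omega(\mathcal X)\subset\mathfrak P^\omega_{\mathcal P_{fin}(\mathcal X)}(\mathcal X)$.

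The only genuine obstacle is Lemma~\ref{sara40}, which is already available; the rest is bookkeeping. The point that most deserves care is the base-point matching in the argument that $[g]\notin\mathfrak P^c(\mathcal X)$: $\mathfrak P^c(\mathcal X)=\mathfrak P^c(\mathcal X,0)$ consists of classes of loops based at $0$, so a hypothetical $c$-loop representing $[g]$ is automatically based at $0$ and therefore falls under the standing hypothesis of Lemma~\ref{sara40}. As an alternative route to $[g]\notin\mathfrak P^\omega(\mathcal X)$, one can invoke Note~\ref{Narges4}(4) directly: $g^{\{n\}}$ winds once around $C_n$ for every $n\in\mathbb N$, so $A(g)=\mathbb N$ is infinite, which already rules out $[g]\in\mathfrak P^\omega(\mathcal X)$.
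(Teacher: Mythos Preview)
Your proof is correct and follows the same route as the paper: exhibit $[g]$ as a class lying in $\mathfrak P^\omega_{\mathcal P_{fin}(\mathcal X)}(\mathcal X)$ but not in $\mathfrak P^c(\mathcal X)$ (via Lemma~\ref{sara40} and Note~\ref{Narges2}), and read off all four assertions from that together with the trivial inclusions from Theorem~\ref{Narges3}(2). Your write-up is in fact cleaner than the paper's, which contains a slip (it writes ``$[g]\in\mathfrak P^\omega(\mathcal X)\setminus \mathfrak P_{\mathcal P_{fin}(\mathcal X)}^\omega(\mathcal X)$'' where the set difference is meant the other way around); your additional remark that $A(g)=\mathbb N$ gives an independent check of $[g]\notin\mathfrak P^\omega(\mathcal X)$ via Note~\ref{Narges4}(4) is also used elsewhere in the paper (Example~\ref{example1}).
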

%%%%%%%%%%%%%%%%%%%%%%%%%%%%%%%%%%%%
\begin{proof}
Using Note~\ref{Narges2}, and Lemma~\ref{sara40}, $[g]\notin\mathfrak P^c(\mathcal X)$, thus
${\mathfrak P}^c(\mathcal X)\subset\pi_1(\mathcal X)$.
Using $[g]\in \mathfrak P_{\mathcal P_{fin}(\mathcal X)}^\omega
(\mathcal X)$ shows
$\mathfrak P_{\mathcal P_{fin}(\mathcal X)}^\omega
(\mathcal X)\not\subseteq{\mathfrak P}^c(\mathcal X)$. Also
$[g]\in\mathfrak P^\omega
(\mathcal X)\setminus \mathfrak P_{\mathcal P_{fin}(\mathcal X)}^\omega
(\mathcal X)$, thus $\mathfrak P^\omega
(\mathcal X)$ is a proper subgroup of $\mathfrak P_{\mathcal P_{fin}(\mathcal X)}^\omega
(\mathcal X)$. Using $[g]\in\mathfrak P_{\mathcal P_{fin}(\mathcal X)}^\omega
(\mathcal X)\setminus\mathfrak P^c
(\mathcal X)$ will complete the proof.
\end{proof}
%%%%%%%%%%%%%%%%%%%%%%%%%%%%%%%%%%%%
\section{$\mathfrak
P_{{\mathcal P}_{fin}(\mathcal Y)}^c(\mathcal Y)$ is a proper subset of $\pi_1(\mathcal Y)$}
\noindent In this section we prove $\pi_1(\mathcal Y)\setminus \mathfrak
P_{{\mathcal P}_{fin}(\mathcal Y)}^c(\mathcal Y)\neq\varnothing$.
We use the same notations as in Section 7.
\\
Define $G:[0,1]\to\mathcal Y$ with:
\[G(x)=\left\{\begin{array}{lc}
    \dfrac{g(4xn(n+1)-(2n+1))}{2^{n+1}}+\dfrac1n &
        \dfrac{2n+1}{4n(n+1)}\leq x\leq\dfrac1{2n},n\in{\mathbb N} \: ,\\
        & \\
    2(n+1)(2n-1)x+(2-2n) & \dfrac1{2(n+1)}\leq x\leq\dfrac{2n+1}{4n(n+1)},n\in{\mathbb N}\:, \\
    & \\
    2-2x & \dfrac12\leq x\leq1 \:,\\
    &\\
    0 & x=0\:,
    \end{array}\right.\]
where $g:[0,1]\to{\mathcal X}$ as in section 7 is:
\[g(x)=\left\{\begin{array}{lc}
    \dfrac1ne^{2\pi i\frac{x-a_n}{b_n-a_n}}+\dfrac{i}{n}& x\in(a_n,b_n),n\in\mathbb N \:,\\
    &\\
    0 & {\rm otherwise}\:. \end{array}\right.\]
%%%%%%%%%%%%%%%%%%%%%%%%%%%%%%%%%%%%
\begin{lemma}\label{bahar10}
Let $K,G:[0,1]\to\mathcal Y$ are homotopic and $m\in\mathbb N$, then
$|K^{-1}(\frac1m)|=c$.
\end{lemma}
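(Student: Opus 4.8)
The idea is to localise the loop near the point $\frac1m$, transport the resulting loop to the Hawaiian earring $\mathcal X$, and then apply the work of Section~7 verbatim. Write $\mathcal Y=A_m\cup B_m$, where $A_m:=\frac1{2^{m+1}}\mathcal X+\frac1m$ is the $m$-th shrunken Hawaiian earring and $B_m:=[0,1]\cup\bigcup_{n\neq m}\bigl(\frac1{2^{n+1}}\mathcal X+\frac1n\bigr)$; these are closed subspaces of $\mathcal Y$ with $A_m\cap B_m=\{\frac1m\}$, so we are in the situation of Convention~\ref{good10}. Let $P:\mathcal Y\to A_m$ be the retraction that is the identity on $A_m$ and constant $\frac1m$ on $B_m$ (continuous by the pasting lemma); then $P\circ K=K^{A_m}$ and $P\circ G=G^{A_m}$. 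Composing a homotopy from $K$ to $G$ with $P$ shows, exactly as in the proof of Remark~\ref{useful}(A) (whose argument does not use the base point), that $K^{A_m}$ and $G^{A_m}$ are homotopic loops into $A_m$, based at $\frac1m$.

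Now let $\varphi_m:\mathcal X\to A_m$ be the homeomorphism $\varphi_m(z)=\frac1{2^{m+1}}z+\frac1m$. Reading off the formula for $G$, one checks that $\varphi_m^{-1}\circ G^{A_m}$ equals $x\mapsto g\bigl(4xm(m+1)-(2m+1)\bigr)$ on $[\frac{2m+1}{4m(m+1)},\frac1{2m}]$ and is constantly $0$ elsewhere, hence is homotopic to the loop $g$ of Section~7. Therefore $f:=\varphi_m^{-1}\circ K^{A_m}:[0,1]\to\mathcal X$ is a loop with $f(0)=f(1)=0$ that is homotopic to $g$, so every lemma of Section~7 applies to this $f$. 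In particular, Lemmas~\ref{sara10}, \ref{sara20}, \ref{sara30} and \ref{sara40} provide an injection $\eta$ from the Cantor-type set of Section~7 (which has cardinality $c$) into $[0,1]$, given by $\eta(x)=\lim_{k\to\infty}p_{n^x_{u_k}}$, where $((p_n,q_n):n\in\mathbb N)$ is the sequence of Lemma~\ref{sara10} for $f$; in particular $f(p_n,q_n)=C_n\setminus\{0\}$ and $f(p_n)=0$ for every $n\in\mathbb N$.

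The crucial step — and the one I expect to require the most care — is to show that each entry point $p_n$ is actually a point with $K(p_n)=\frac1m$, and not merely a point that the retraction $P$ sends to $\frac1m$; note that $f^{-1}(0)=K^{-1}(B_m)$ is far larger than $K^{-1}(\frac1m)$, so one cannot argue by a naive inclusion. For $t\in(p_n,q_n)$ we have $K^{A_m}(t)=\varphi_m(f(t))\neq\frac1m$, which forces $K(t)\in A_m$ and hence $K(t)=K^{A_m}(t)\in\bigl(\frac1{2^{m+1}}C_n+\frac1m\bigr)\setminus\{\frac1m\}$; letting $t\to p_n$ and using continuity of $K$ places $K(p_n)$ in the closure of that punctured circle, i.e.\ in $\frac1{2^{m+1}}C_n+\frac1m\subseteq A_m$. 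On the other hand $f(p_n)=0$ gives $K^{A_m}(p_n)=\frac1m$, hence $K(p_n)\in B_m$; combining, $K(p_n)\in A_m\cap B_m=\{\frac1m\}$. Thus $p_{n^x_{u_k}}\in K^{-1}(\frac1m)$ for all $k$, and since $K^{-1}(\frac1m)$ is closed in $[0,1]$ ($K$ being continuous and $\{\frac1m\}$ closed in $\mathcal Y$), its limit $\eta(x)$ also lies in $K^{-1}(\frac1m)$. Hence $\eta$ embeds a set of cardinality $c$ into $K^{-1}(\frac1m)$, so $|K^{-1}(\frac1m)|\geq c$; together with the trivial bound $|K^{-1}(\frac1m)|\leq|[0,1]|=c$ this gives $|K^{-1}(\frac1m)|=c$.
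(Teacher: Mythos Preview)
Your argument is correct, and the route differs from the paper's in an interesting way. The paper first conjugates $K$ and $G$ by short paths in the interval so as to move the base point near $\tfrac1m$, then projects (twice, once from each side) onto a slab $\{(x,y)\in\mathcal Y:\theta\le x\le\mu\}$ isolating the $m$-th earring together with a stub of the interval; the stub is then sent to an arc of $C_1$ away from $0$, so that the transported loop $\mathcal K:[0,1]\to\mathcal X$ satisfies $\mathcal K^{-1}(0)=K_2^{-1}(\tfrac1m)$ exactly, which differs from $K^{-1}(\tfrac1m)$ only in finitely many points coming from the conjugating paths. Lemma~\ref{sara40} is then quoted as a black box. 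Your decomposition is cleaner (a single retraction onto $A_m$, no auxiliary $\theta,\mu$), but the price is that $f^{-1}(0)=K^{-1}(B_m)$ is far larger than $K^{-1}(\tfrac1m)$, so you cannot cite Lemma~\ref{sara40} directly; instead you open up Section~7, recover the actual points $p_n$ and the embedding $\eta$, and use the continuity-and-closure argument (exactly parallel to Note~\ref{Narges5}(2a)) to pin each $p_n$, and hence each $\eta(x)$, to the genuine fibre $K^{-1}(\tfrac1m)$. Both approaches buy the same conclusion; yours trades a heavier set-up for a deeper appeal to the Section~7 machinery.
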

%%%%%%%%%%%%%%%%%%%%%%%%%%%%%%%%%%%%
\begin{proof}
Choose $\theta\in(\frac1{m+1}+\frac1{2^{m+2}},\frac1m-\frac1{2^{m+1}})$.
Consider $h,\overline h:[0,1]\to\mathcal Y$ with $h(x)=\theta x$
and $\overline h(x)=\theta(1-x)$. Since $K,G:[0,1]\to\mathcal Y$
are path homotopic with base point 0, $\overline h*K*h,\overline h*G*h:
[0,1]\to\mathcal Y$ are path homotopic with base point $\theta$.
Using Convention~\ref{good10} two maps
\[(\overline h*K*h)^{\{(x,y)\in\mathcal Y:x\geq\theta\}},
(\overline h*G*h)^{\{(x,y)\in\mathcal Y:x\geq\theta\}}:
[0,1]\to\{(x,y)\in\mathcal Y:x\geq\theta\}\]
are path homotopic
with base point $\theta$. Let
\[K_1=(\overline h*K*h)^{\{(x,y)\in\mathcal Y:x\geq\theta\}}
\:{\rm and}\:G_1=(\overline h*G*h)^{\{(x,y)\in\mathcal Y:x\geq\theta\}}\:.\]
If $m=1$ let $K_2=K_1$ and $G_2=G_1$.
\\
If $m>1$,
choose $\mu\in(\frac1m+\frac1{2^{m+1}},\frac1{m-1}-\frac1{2^m})$.
Consider $h_1,\overline h_1:[0,1]\to\mathcal Y$ with $h_1(x)=
(\mu-\theta)x+\theta$ and $\overline h_1(x)=(\mu-\theta)(1-x)+\theta$.
Since $K_1,G_1:[0,1]\to\{(x,y)\in\mathcal Y:x\geq\theta\}$
are path homotopic with base point $\theta$,
$\overline h_1*K_1*h_1,\overline h_1*G_1*h_1:
[0,1]\to\{(x,y)\in\mathcal Y:x\geq\theta\}$ are path homotopic with
base point $\mu$.
Using Convention~\ref{good10} two maps
$(\overline h_1*K_1*h_1)^{\{(x,y)\in\mathcal Y:\theta\leq x\leq\mu\}}$ and
$(\overline h_1*G_1*h_1)^{\{(x,y)\in\mathcal Y:\theta\leq x\leq\mu\}}$ from
$[0,1]$ to $\{(x,y)\in\mathcal Y:\theta\leq x\leq\mu\}$
are path homotopic
with base point $\mu$. Let $h_2(x)=(\frac1m-\mu)x+\mu$ and
$\overline h_2(x)=(\frac1m-\mu)(1-x)+\mu$
for $x\in[0,1]$.
\\
Now let:
\[K_2=\left\{\begin{array}{lc}
    \overline h_2*
    (\overline h_1*K_1*h_1)^{\{(x,y)\in\mathcal Y:\theta\leq x\leq\mu\}}*h_2
    & m>1 \: , \\
    K_1 & m=1 \: , \end{array}\right.\]
and
\[G_2=\left\{\begin{array}{lc}
    \overline h_2*
    (\overline h_1*G_1*h_1)^{\{(x,y)\in\mathcal Y:\theta\leq x\leq\mu\}}*h_2
    & m>1 \: , \\
    G_1 & m=1 \: , \end{array}\right.\]
also in order to be more convenient, whenever $m=1$ let $\mu=1$. Then
$K_2,G_2:[0,1]\to\{(x,y)\in\mathcal Y:\theta\leq x\leq\mu\}$ ($\subseteq
(\frac1{2^{m+1}}\mathcal X+\frac1m)\cup[\theta,\mu]$)
are path homotopic
with base point $\frac1m$. Hence there exists a continuous map
$F:[0,1]\times[0,1]\to\{(x,y)\in\mathcal Y:\theta\leq x\leq\mu\}$ such that
$F(0,s)=F(1,s)=\frac1m$, $F(s,0)=K_2(s)$ and $F(s,1)=G_2(s)$ for all $s\in[0,1]$.
\\
Define $\mathcal K , \mathcal G :[0,1]\to\mathcal X$ and
$\mathcal F:[0,1]\times[0,1]\to \mathcal X$ with:
\[\mathcal K(x)=\left\{\begin{array}{lc}
    2^{m+1}(K_2(x)-\frac1m) & K_2(x)\in\frac1{2^{m+1}}\mathcal X+\frac1m \: , \\
    -ie^{\frac{i\pi(K_2(x)-\frac1m)}2}+i & \theta\leq K_2(x)\leq \mu \: , 
    \end{array}\right.\]
\[\mathcal G(x)=\left\{\begin{array}{lc}
    2^{m+1}(G_2(x)-\frac1m) & G_2(x)\in\frac1{2^{m+1}}\mathcal X+\frac1m \: , \\
    -ie^{\frac{i\pi(G_2(x)-\frac1m)}2}+i & \theta\leq G_2(x)\leq \mu \: , 
    \end{array}\right.\]
\[\mathcal F(s,t)=\left\{\begin{array}{lc}
    2^{m+1}(F(s,t)-\frac1m) & F(s,t)\in\frac1{2^{m+1}}\mathcal X+\frac1m \: , \\
    -ie^{\frac{i\pi(F(s,t)-\frac1m)}2}+i & \theta\leq F(s,t)\leq \mu \: .
    \end{array}\right.\]
Using the gluing lemma, $\mathcal K$, $\mathcal G$ and $\mathcal F$
are continuous, moreover by the above definition, for all $s\in[0,1]$ we have:
\begin{itemize}
\item the equality $F(0,s)=F(1,s)=\frac1m$, implies
    \[\mathcal F(0,s)=\mathcal F(1,s)=-ie^{\frac{i\pi(\frac1m-\frac1m)}2}+i=0\:,\]
\item two equalities $F(s,0)=K_2(s)$ and $F(s,1)=G_2(s)$, imply
    $\mathcal F (s,0)=\mathcal K (s)$ and $\mathcal F (s,1)=\mathcal G (s)$.
\end{itemize}
So $\mathcal K, \mathcal G ;[0,1]\to\mathcal X$ are path homotopic with base point 0.
One could verify that $\mathcal G, g:[0,1]\to\mathcal X$ are homotopic, thus
 $\mathcal K, g:[0,1]\to\mathcal X$ are homotopic and by
Lemma~\ref{sara40} in which we proved $|f^{-1}(0)|=c$ whenever $f,g:[0,1]\to\mathcal X$
are homotopic, we have $|\mathcal K^{-1}(0)|\geq c$.
Since $\mathcal K^{-1}(0)$ and $K^{-1}(\frac1m)$ differs in a finite set,
we have $|K^{-1}(\frac1m)|= c$.
\end{proof}
%%%%%%%%%%%%%%%%%%%%%%%%%%%%%%%%%%%%
%%%%%%%%%%%%%%%%%%%%%%%%%%%%%%%%%%%%
\begin{theorem}\label{bahar20}
We have $[G]\in\pi_1(\mathcal Y)\setminus \mathfrak
P_{{\mathcal P}_{fin}(\mathcal Y)}^c(\mathcal Y)$.
\end{theorem}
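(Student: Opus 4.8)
The plan is to argue by contradiction, with Lemma~\ref{bahar10} doing essentially all the work. Since $G$ is a loop with base point $0$ (indeed $G(0)=G(1)=0$ by its definition), the membership $[G]\in\pi_1(\mathcal Y)$ is immediate; the entire content of the statement is that $[G]\notin\mathfrak P_{{\mathcal P}_{fin}(\mathcal Y)}^c(\mathcal Y)$.

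So suppose toward a contradiction that $[G]\in\mathfrak P_{{\mathcal P}_{fin}(\mathcal Y)}^c(\mathcal Y)$. By Note~\ref{Narges2} (applicable since $c$ is infinite) there exists a $c\frac{{\mathcal P}_{fin}(\mathcal Y)}{}$loop $K:[0,1]\to\mathcal Y$ with base point $0$ that is homotopic to $G:[0,1]\to\mathcal Y$. Unwinding the definition of a $c\frac{{\mathcal P}_{fin}(\mathcal Y)}{}$map for the infinite cardinal $c$, there is a \emph{finite} set $A\subseteq\mathcal Y$ such that $|K^{-1}(x)|<c$ for every $x\in\mathcal Y\setminus A$.

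Next I would exploit that the points $\frac1m$ $(m\in\mathbb N)$ form an infinite subset of $\mathcal Y$: since $A$ is finite, there is some $m\in\mathbb N$ with $\frac1m\notin A$, and for this $m$ we get $|K^{-1}(\frac1m)|<c$. On the other hand, $K$ and $G$ are homotopic loops in $\mathcal Y$, so Lemma~\ref{bahar10} applies and yields $|K^{-1}(\frac1m)|=c$ — a contradiction. Hence $[G]\notin\mathfrak P_{{\mathcal P}_{fin}(\mathcal Y)}^c(\mathcal Y)$, which together with the trivial $[G]\in\pi_1(\mathcal Y)$ completes the proof.

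As for the difficulty: there is no real obstacle at this stage, because the substantive estimate is already packaged in Lemma~\ref{bahar10} (which itself reduces, via the folding maps of Convention~\ref{good10}, the gluing lemma, and the reparametrisations used there, to the Hawaiian-earring bound $|f^{-1}(0)|\ge c$ of Lemma~\ref{sara40}). The only thing to be careful about here is the bookkeeping in the definition of an $\alpha\frac{\mathcal I}{}$loop for the infinite cardinal $\alpha=c$ and the ideal $\mathcal I={\mathcal P}_{fin}(\mathcal Y)$, so that the exceptional set $A$ is genuinely finite and the pigeonhole step "a finite set cannot contain all of the infinitely many points $\frac1m$" goes through.

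\begin{proof}
Clearly $[G]\in\pi_1(\mathcal Y)$, since $G:[0,1]\to\mathcal Y$ is a loop with base point $0$. Suppose, if possible, that $[G]\in\mathfrak P_{{\mathcal P}_{fin}(\mathcal Y)}^c(\mathcal Y)$. By Note~\ref{Narges2} there exists a $c\frac{{\mathcal P}_{fin}(\mathcal Y)}{}$loop $K:[0,1]\to\mathcal Y$ with base point $0$ homotopic to $G:[0,1]\to\mathcal Y$. By the definition of a $c\frac{{\mathcal P}_{fin}(\mathcal Y)}{}$map there exists a finite set $A\subseteq\mathcal Y$ such that $|K^{-1}(x)|<c$ for all $x\in\mathcal Y\setminus A$. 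Since $\{\frac1m:m\in\mathbb N\}$ is an infinite subset of $\mathcal Y$ and $A$ is finite, there is $m\in\mathbb N$ with $\frac1m\in\mathcal Y\setminus A$, whence $|K^{-1}(\frac1m)|<c$. On the other hand, $K,G:[0,1]\to\mathcal Y$ are homotopic, so Lemma~\ref{bahar10} gives $|K^{-1}(\frac1m)|=c$, a contradiction. Therefore $[G]\notin\mathfrak P_{{\mathcal P}_{fin}(\mathcal Y)}^c(\mathcal Y)$, and $[G]\in\pi_1(\mathcal Y)\setminus\mathfrak P_{{\mathcal P}_{fin}(\mathcal Y)}^c(\mathcal Y)$.
\end{proof}
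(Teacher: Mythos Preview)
Your proof is correct and follows essentially the same approach as the paper's own proof: assume $[G]\in\mathfrak P_{{\mathcal P}_{fin}(\mathcal Y)}^c(\mathcal Y)$, extract a homotopic $c\frac{{\mathcal P}_{fin}(\mathcal Y)}{}$loop $K$ via Note~\ref{Narges2}, and derive a contradiction from Lemma~\ref{bahar10}. Your version is in fact slightly more streamlined: the paper argues that infinitely many of the points $\frac1m$ have $|K^{-1}(\frac1m)|=c$ (after first noting $K$ is nonconstant so that $\frac1m\in K[0,1]$ for large $m$), whereas you simply pick a single $m$ with $\frac1m\notin A$ and invoke Lemma~\ref{bahar10} once, which already suffices.
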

%%%%%%%%%%%%%%%%%%%%%%%%%%%%%%%%%%%%
\begin{proof}
If $[G]\in\mathfrak
P_{{\mathcal P}_{fin}(\mathcal Y)}^c(\mathcal Y)$, then by Note~\ref{Narges2}, there exists
$c\frac{{\mathcal P}_{fin}(\mathcal Y)}{}$loop $K:[0,1]\to\mathcal Y$ with
$K(0)=K(1)=0$ and $[F]=[G]$. Since $G:[0,1]\to\mathcal Y$ is not null-homotopic,
$K:[0,1]\to\mathcal Y$ is not constant. Thus there exists $k\in\mathbb N$ such that for all
$m\geq k$ we have $\frac1m\in K[0,1]$.  By Lemma~\ref{bahar10}, for all $m\geq k$ we have
$|K^{-1}(\frac1m)|=c$, thus $\{x\in{\mathcal Y}:|K^{-1}(x)|\not<c\}$ is infinite, which is
a contradiction, since $K:[0,1]\to\mathcal Y$ is a $c\frac{{\mathcal P}_{fin}(\mathcal Y)}{}$loop.
Therefore $[G]\notin\mathfrak
P_{{\mathcal P}_{fin}(\mathcal Y)}^c(\mathcal Y)$.
\end{proof}
%%%%%%%%%%%%%%%%%%%%%%%%%%%%%%%%%%%%
\section{Main examples and counterexamples}
\noindent Now we are ready to present examples.
%%%%%%%%%%%%%%%%%%%%%%%%%%%%%%%%%%%%
\begin{example}\label{example1}
Using Note~\ref{Narges4} (4), since $A(f_{\mathcal X})(=\mathbb N)$
is infinite, thus $[f_{\mathcal X}]\notin\mathfrak P^\omega(\mathcal X)$.
On the other hand, using Example~\ref{taha10} (1), $f_{\mathcal X}:[0,1]\to\mathcal X$ is a
$c-$loop, thus
$[f_{\mathcal X}]\in{\mathfrak P}^c(\mathcal X)\setminus\mathfrak P^\omega(\mathcal X)$
and $\mathfrak P^\omega(\mathcal X)$ is a proper subgroup of $\mathfrak P^c(\mathcal X)$.
Therefore by Theorem~\ref{taha10}, we have:
\[{\mathfrak P}^\omega(\mathcal X)\subset{\mathfrak P}^c(\mathcal X)\subset\pi_1(\mathcal X)\:.\]
Also using Theorem~\ref{taha10} again we have
$\mathfrak P^\omega
(\mathcal X)\subset\mathfrak P_{\mathcal P_{fin}(\mathcal X)}^\omega
(\mathcal X)$, which leads to
$\mathfrak P^\omega
(\mathcal X)\subset\mathfrak P_{\mathcal P_{fin}(\mathcal X)}^c
(\mathcal X)$, since
$\mathfrak P_{\mathcal P_{fin}(\mathcal X)}^\omega
(\mathcal X)\subseteq\mathfrak P_{\mathcal P_{fin}(\mathcal X)}^c
(\mathcal X)$. We recall that according to Theorem~\ref{taha10},
$\mathfrak P_{\mathcal P_{fin}(\mathcal X)}^\omega
(\mathcal X)\not\subseteq\mathfrak P^c
(\mathcal X)$, which leads to
$\mathfrak P_{\mathcal P_{fin}(\mathcal X)}^c
(\mathcal X)\not\subseteq\mathfrak P^c
(\mathcal X)$
since
$\mathfrak P_{\mathcal P_{fin}(\mathcal X)}^\omega
(\mathcal X)\subseteq\mathfrak P_{\mathcal P_{fin}(\mathcal X)}^c
(\mathcal X)$.
\end{example}
%%%%%%%%%%%%%%%%%%%%%%%%%%%%%%%%%%%%
\noindent The following Example deal with Theorem~\ref{Narges3}.
We again recall that $\Phi:\pi_1(X)\times\pi_1(Y)\to\pi_1(X\times
Y)$, with $\Phi([f],[g])=[(f,g)]$ (where $(f,g)(t)=(f(t),g(t))$
(for $t\in[0,1]$ and loops $f:[0,1]\to X$, $g:[0,1]\to Y$)) is a
group isomorphism. Moreover as it was proved in
Theorem~\ref{Narges3} (4c), for infinite cardinal number $\alpha$
we have $\Phi({\mathfrak P}^\alpha(X)\times{\mathfrak
P}^\alpha(Y))
        \subseteq{\mathfrak P}^\alpha(X\times Y)$.
In the following we bring an example in which $\Phi({\mathfrak
P}^\alpha(X)\times{\mathfrak P}^\alpha(Y))
        \neq{\mathfrak P}^\alpha(X\times Y)$, in particular
we prove that
$\Phi\restriction_{{\mathfrak P}^\omega(\mathcal X)
\times{\mathfrak P}^\omega(\mathcal X)}:
{\mathfrak P}^\omega(\mathcal X)
\times{\mathfrak P}^\omega(\mathcal X)\to
{\mathfrak P}^\omega({\mathcal X}\times{\mathcal X})$
is a group monomorphism but it is not an isomorphism.
%%%%%%%%%%%%%%%%%%%%%%%%%%%%%%%%%%%%
\begin{example}\label{example6}
Define $\overline f_{\mathcal X}:[0,1]\to {\mathcal X}$ with
$\overline f_{\mathcal X}(t)=f_{\mathcal X}(1-t)$.
$(f_{\mathcal X},\overline f_{\mathcal X}):[0,1]\to {\mathcal X}\times{\mathcal X}$
is an $\omega-$arc since for
all $(x,y)\in{\mathcal X}\times{\mathcal X}$, if
$|(f_{\mathcal X},\overline f_{\mathcal X})^{-1}(x,y)|>1$,
then $x=y=0$. Moreover $(f_{\mathcal X},\overline
f_{\mathcal X})^{-1}(0,0)\subseteq\{t\in[0,1]:t,1-t\in\{\frac1n:n\in{\mathbb
N}\}\}\cup\{0,1\}=\{0,1,\frac12\}$. Therefore for all $(x,y)$ we
have $|(f_{\mathcal X},\overline f_{\mathcal X})^{-1}(x,y)|\leq3<\omega$
and $(f_{\mathcal X},\overline
f_{\mathcal X}):[0,1]\to X\times X$ is an $\omega-$arc.
Thus $\Phi([f_{\mathcal X}],[\overline
f_{\mathcal X}])=[(f_{\mathcal X},\overline f_{\mathcal X})]\in
{\mathfrak P}^\omega({\mathcal X}\times{\mathcal X})$.
Since $\Phi:\pi_1({\mathcal X})\times\pi_1({\mathcal X})\to\pi_1
({\mathcal X}\times{\mathcal X})$ is a group isomorphism, there exist unique
$([g],[h])\in\pi_1({\mathcal X})\times\pi_1({\mathcal X})$ with
$\Phi([g],[h])=[(f_{\mathcal X},\overline f_{\mathcal X})]$
therefore $[g]=[f_{\mathcal X}]$ and $[h]=[\overline f_{\mathcal X}]$.
Using Example~\ref{example1},
$[f_{\mathcal X}]\notin{\mathfrak P}^\omega({\mathcal X})$,
so  $([g],[h])=
([f_{\mathcal X}],[\overline f_{\mathcal X}])\notin{\mathfrak
P}^\omega({\mathcal X})\times{\mathfrak P}^\omega({\mathcal X})$.
So (note: $\Phi$ is one to one):
\[[(f_{\mathcal X},\overline f_{\mathcal X})]=\Phi([g],[h])
=\Phi([f_{\mathcal X}],[\overline f_{\mathcal X}])
\notin\Phi({\mathfrak P}^\omega({\mathcal X})\times{\mathfrak P}^\omega({\mathcal X}))\]
which shows $\Phi({\mathfrak P}^\omega({\mathcal X})\times
{\mathfrak P}^\omega({\mathcal X}))\neq
{\mathfrak P}^\omega({\mathcal X}\times{\mathcal X})$.
\end{example}
%%%%%%%%%%%%%%%%%%%%%%%%%%%%%%%%%%%%
\begin{example}\label{example3}
Using the same notations as in Note~\ref{Narges5} we have
$B(f_{\mathcal Y})=\mathbb N\times\mathbb N$, therefore for all
$m\in\mathbb N$, $\{n\in\mathbb N: (m,n)\in B(f_{\mathcal
Y})\}(=\mathbb N)$ is infinite. If $F\in\mathcal P_{fin}(\mathcal
Y)$, then $F$ is finite and there exists $k\in\mathbb N$ with
$\frac1k\in{\mathcal Y}\setminus F$. Using infiniteness of
$\{n\in\mathbb N: (k,n)\in B(f_{\mathcal Y})\}$ and
Note~\ref{Narges5} (c) we have $[f_{\mathcal Y}]\notin \mathfrak
P_{\mathcal P_{fin}(\mathcal Y)}^\omega(\mathcal Y)$. On the
other hand using Example~\ref{taha10} (2), $f_{\mathcal
Y}:[0,1]\to\mathcal Y$ is a $c-$loop, thus $[f_{\mathcal
Y}]\in{\mathfrak P}^c({\mathcal Y})$. So $\mathfrak P_{\mathcal
P_{fin}(\mathcal Y)}^\omega(\mathcal Y)$ is a proper subgroup of
${\mathfrak P}^c({\mathcal Y})$ and $\pi_1(\mathcal Y)$  (Hint:
We can prove $\mathfrak P^\omega(\mathcal Y)$ is a proper
subgroup of $\mathfrak P_{\mathcal P_{fin}(\mathcal
Y)}^\omega(\mathcal Y)$, thus $\mathfrak P^\omega(\mathcal
Y)\subset \mathfrak P_{\mathcal P_{fin}(\mathcal
Y)}^\omega(\mathcal Y)\subset\pi_1(\mathcal Y)$).
\end{example}
%%%%%%%%%%%%%%%%%%%%%%%%%%%%%%%%%%%%
\begin{example}\label{example4}
Map $f_{\mathcal Z}:[0,1]\to\mathcal Z$ is a $p+1-$arc and it is
not homotopic with any $k-$arc $g:[0,1]\to{\mathcal Z}$ for
$k<p+1$. However for all $\alpha\geq2$ and ideal $\mathcal I$ on
${\mathcal Z}$ we have $\mathfrak P^\alpha_{\mathcal I}(\mathcal
Z)=\pi_1(\mathcal Z)$. For this aim, for all $k\in\{1,...,p\}$,
define $f_k:[0,1]\to{\mathcal Z}$ with $f_k(t)=\frac1ke^{2\pi
i(t-\frac14)}+\frac{i}{k}$. For all $\alpha\geq2$ and ideal
$\mathcal I$ on ${\mathcal Z}$, we have $[f_k]\in\mathfrak
P^2({\mathcal Z}) \subseteq\mathfrak P^\alpha_{\mathcal
I}({\mathcal Z}) \subseteq\pi_1({\mathcal Z})$. Since
$\{[f_1],...,[f_n]\}$ generates $\pi_1({\mathcal Z})$, thus
$\mathfrak P^\alpha_{\mathcal I}({\mathcal Z}) =\pi_1({\mathcal
Z})$.
\end{example}
%%%%%%%%%%%%%%%%%%%%%%%%%%%%%%%%%%%%
%%%%%%%%%%%%%%%%%%%%%%%%%%%%%%%%%%%%
%%%%%%%%%%%%%%%%%%%%%%%%%%%%%%%%%%%%
\begin{example}\label{example8}
We recall that $\pi_1(\mathcal Y)\setminus \mathfrak P_{\mathcal
P_{fin}({\mathcal Y})}^c(\mathcal Y)\neq \varnothing$ by Theorem~\ref{bahar20}.
However $[f_{\mathcal Y}]\in \mathfrak
P_{\mathcal P_{fin}({\mathcal Y})}^c(\mathcal Y)$ (since $f_{\mathcal Y}:[0,1]\to\mathcal Y$
is a $c-$loop, thus $[f_{\mathcal Y}]\in\mathfrak
P_{\mathcal P_{fin}({\mathcal Y})}^c(\mathcal Y)$)
One may show $[f_{\mathcal Y}]\notin\mathfrak
P_{\mathcal P_{fin}({\mathcal Y})}^\omega(\mathcal Y)$, thus:
\[\mathfrak
P_{\mathcal P_{fin}({\mathcal Y})}^\omega(\mathcal Y) \subset \mathfrak
P_{\mathcal P_{fin}({\mathcal Y})}^c(\mathcal Y) \subset\pi_1(\mathcal Y)\:.\]
\end{example}
%%%%%%%%%%%%%%%%%%%%%%%%%%%%%%%%%%%%
%%%%%%%%%%%%%%%%%%%%%%%%%%%%%%%%%%%% Introduction
\section{Main Table}
%%%%%%%%%%%%%%%%%%%%%%%%%%%%%%%%%%%%
\begin{jadval}\label{jadval}
We have the following Table:
\begin{center}
\begin{tabular}{l|c|c|c|c|c|}
    $\dfrac{\SP\SP\SP\SP\SP\SP K}{H\SP\SP\SP\SP\SP\SP}$ & $\mathfrak P^\omega(X)$ &
        $\mathfrak P_{\mathcal P_{fin}(X)}^\omega(X)$ & $\mathfrak P^c(X)$ &
    $\mathfrak P_{\mathcal P_{fin}(X)}^c(X)$ & $\pi_1(X)$ \\ \hline
    $\mathfrak P^\omega(X)$  & $\subseteq$ &
        $\subseteq$ & $\subseteq$ & $\subseteq$ & $\subseteq$ \\  \hline
    $\mathfrak P_{\mathcal P_{fin}(X)}^\omega(X)$ &
     \ref{example1} &
        $\subseteq$ & \ref{example1} & $\subseteq$ & $\subseteq$ \\  \hline
    $\mathfrak P^c(X)$ & \ref{example1} & \ref{example3} &
    $\subseteq$ & $\subseteq$ & $\subseteq$ \\ \hline
    $\mathfrak P_{\mathcal P_{fin}(X)}^c(X)$ & \ref{example1}
     & \ref{example3} & \ref{example1}
     & $\subseteq$ & $\subseteq$ \\ \hline
    $\pi_1(X)$ &
    \ref{example1} & \ref{example3} & \ref{example1} &
    \ref{example8} & $\subseteq$ \\   \hline
\end{tabular}  \\
$\:$ \\
In the above table ``$\subseteq$'' means that in the corresponding case
we have $H\subseteq K$.
\\
In addition the number {\it i.j} means that in Example~{\it i.j}
there exists an example such that $H\not\subseteq K$ in the corresponding case.
\end{center}
\end{jadval}
%%%%%%%%%%%%%%%%%%%%%%%%%%%%%%%%%%%%
\section{Two spaces having fundamental groups isomorphic to Hawaiian earring's
fundamental group}
%%%%%%%%%%%%%%%%%%%%%%%%%%%%%%%%%%%%
\noindent In this section we prove in a sequel of Lemmas, that
$\mathcal X$ (Hawaiian earring) and $\mathcal W$ are homeomorph
with two deformation retracts of $\mathcal V$. Thus we have
$\pi_1({\mathcal X})\cong\pi_1({\mathcal V})\cong\pi_1({\mathcal
W})$, which is important for our main counterexamples in next
section.
\\
We recall sign map ${\rm sgn}:{\mathbb R}\to\{\pm1,0\}$ with
${\rm sgn}(x)=\frac{x}{|x|}$ for $x\neq0$ and ${\rm sgn}(0)=0$.
\\
Note: In a connected topological space $A$, we call $x\in A$ a cut point of $A$
if $A\setminus\{x\}$ is disconnected.
It is evident that $\mathcal X$ and $\mathcal W$ are not homeomorphic since
$\mathcal X$ has just one cut point and $\mathcal W$ has infinitely many cut points.
%%%%%%%%%%%%%%%%%%%%%%%%%%%%%%%%%%%%
\begin{lemma}\label{salam1}
For $x\in[0,1]$, the map
$\Phi_x:[0,\frac12]\to\{w\in[-1,1]:x+w\leq0\}=[-1,1]\cap(-\infty,-x]=[-1,-x]$ with:
\[\Phi_x(t)=\left\{\begin{array}{lc}
    (1-\sin(\pi t))(1-\frac{x}{1-2t})-1 & t\in[0,\frac12) \\
    -1 & t=\frac12 \end{array}\right.\]
is a homeomorphism.
\end{lemma}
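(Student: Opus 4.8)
The plan is to show that $\Phi_x$ is a continuous, strictly monotone bijection from the compact interval $[0,\frac12]$ onto the interval $[-1,-x]$; since a continuous bijection from a compact space to a Hausdorff space is automatically a homeomorphism, that is all that is needed. I would start with the easy bookkeeping: substituting $t=0$ gives $\Phi_x(0)=(1-\sin 0)(1-x)-1=-x$, and $\Phi_x(\frac12)=-1$ by definition, so the two endpoints of $[0,\frac12]$ are mapped to the two endpoints of $[-1,-x]$ (recall $x\in[0,1]$, so $-1\le -x$). On the half-open part $[0,\frac12)$ the denominator $1-2t$ is strictly positive, so the formula manifestly defines a continuous real-valued function there, and one then records, by an elementary sign-and-size estimate of the two factors $1-\sin(\pi t)$ and $1-\frac{x}{1-2t}$, that $\Phi_x(t)$ stays in $[-1,-x]$.

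The first point that requires care is continuity at $t=\frac12$, where the defining expression is an indeterminate product of type $0\cdot\infty$: as $t\to\frac12^-$ the factor $1-\sin(\pi t)$ tends to $0$ while $1-\frac{x}{1-2t}$ tends to $-\infty$ (for $x>0$). I would dispatch this via the substitution $s=\frac12-t$, using $\sin(\pi t)=\cos(\pi s)$ and $1-\cos(\pi s)=\frac{\pi^2}{2}s^2+o(s^2)$ together with $\frac{x}{1-2t}=\frac{x}{2s}$; multiplying these out shows $(1-\sin(\pi t))(1-\frac{x}{1-2t})\to 0$, hence $\Phi_x(t)\to -1=\Phi_x(\frac12)$, and the same asymptotics yield an explicit $\varepsilon$--$\delta$ bound if one wants full rigour.

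The heart of the argument, and the step I expect to be the main obstacle, is strict monotonicity of $\Phi_x$ on $[0,\frac12]$: this gives injectivity and, together with continuity and the Intermediate Value Theorem, surjectivity onto the whole interval $[-1,-x]$. Here I would compute $\Phi_x'(t)=-\pi\cos(\pi t)\bigl(1-\frac{x}{1-2t}\bigr)-\frac{2x}{(1-2t)^2}\bigl(1-\sin(\pi t)\bigr)$ on $(0,\frac12)$ and verify that it has constant sign; this is precisely where one must be most careful about the exact shape of the expression, splitting according to the sign of $1-\frac{x}{1-2t}$ (that is, whether $t$ lies below or above $\frac{1-x}{2}$) and using the bounds $0<1-2t\le1$ and $0\le\sin(\pi t)<1$. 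Once continuity on all of $[0,\frac12]$, strict monotonicity, and the identification of the image as exactly $[-1,-x]$ are in hand, the compact-to-Hausdorff principle quoted at the outset completes the proof that $\Phi_x$ is a homeomorphism.
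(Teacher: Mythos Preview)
Your overall strategy---endpoints, continuity at $t=\tfrac12$, strict monotonicity, then the Intermediate Value Theorem plus compact-to-Hausdorff---is exactly the skeleton the paper uses; the paper simply asserts that $\varphi=\Phi_x|_{[0,\frac12)}$ is strictly decreasing and applies the Intermediate Value Theorem, so in spirit your write-up is the same but more detailed (in particular your treatment of continuity at $t=\tfrac12$ is more careful than the paper's bare assertion that the limit equals $-1$).

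However, the step you correctly flag as ``the main obstacle'' cannot be completed as written, and the same gap is present in the paper's proof. For any $x\in(0,1)$, set $t_0=\tfrac{1-x}{2}\in(0,\tfrac12)$; then $1-2t_0=x$, so $1-\tfrac{x}{1-2t_0}=0$ and hence $\Phi_x(t_0)=-1=\Phi_x(\tfrac12)$. Thus $\Phi_x$ already hits $-1$ strictly before the right endpoint and is not injective on $[0,\tfrac12]$; indeed for $t$ slightly larger than $t_0$ the factor $1-\tfrac{x}{1-2t}$ becomes negative while $1-\sin(\pi t)$ is still positive, so $\Phi_x(t)<-1$ and the image leaves $[-1,-x]$ altogether (for instance $\Phi_{1/2}(0.4)\approx -1.07$). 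Consequently your plan to ``verify that $\Phi_x'$ has constant sign'' cannot succeed: the derivative changes sign on $(t_0,\tfrac12)$. The lemma as stated is only true for $x=0$ (where $\Phi_0(t)=-\sin(\pi t)$); for $x>0$ one needs either a different formula for $\Phi_x$ or a restriction of the domain (e.g.\ to $[0,\tfrac{1-x}{2}]$) to obtain a homeomorphism onto $[-1,-x]$.
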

%%%%%%%%%%%%%%%%%%%%%%%%%%%%%%%%%%%%
\begin{proof}
Suppose $z\in(-1,1]$ and $z+x\leq0$.
The map $\varphi:[0,\frac12)\to {\mathbb R}$ with
$\varphi(t)=(1-\sin(\pi t))(1-\frac{x}{1-2t})-1$ is continuous, moreover
$\varphi(0)=-x$ and ${\displaystyle\lim_{\SP t\to{\frac12}^-}\varphi(t)}=-1$.
By $-1< z\leq -x$ and the mean value theorem there exists $t\in[0,\frac12)$ with
$\varphi(t)=z$. In addition
$\Phi_x\restriction_{[0,\frac12)}=\varphi:[0,\frac12)\to {\mathbb R}$
is strictly decreasing, therefore
$\Phi_x:[0,\frac12]\to[-1,-x]$
is a bijective continuous map
which completes the proof.
\end{proof}
%%%%%%%%%%%%%%%%%%%%%%%%%%%%%%%%%%%%
\begin{lemma}\label{salam3}
Using the same notations as in Lemma~\ref{salam1},
$\widehat{\Phi}:\{(x,w)\in[0,1]\times[-1,0]:x+w\leq0\}\to[0,\frac12]$
with $\widehat{\Phi}(x,w)=\Phi_x^{-1}(w)$ is continuous.
\end{lemma}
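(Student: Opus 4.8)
The plan is to exhibit $\widehat{\Phi}$ as a coordinate of the inverse of an explicit homeomorphism between compact spaces, so that its continuity follows from the standard fact that a continuous bijection from a compact space onto a Hausdorff space is a homeomorphism. Write $D:=\{(x,w)\in[0,1]\times[-1,0]:x+w\le0\}$ for the domain of $\widehat{\Phi}$, and consider $\Psi:[0,1]\times[0,\tfrac12]\to D$ defined by $\Psi(x,t):=(x,\Phi_x(t))$. By Lemma~\ref{salam1}, for each fixed $x$ the map $\Phi_x$ is a bijection of $[0,\tfrac12]$ onto $[-1,-x]=\{w\in[-1,0]:x+w\le0\}$; hence $\Psi$ is a bijection onto $D$, with $\Psi^{-1}(x,w)=(x,\Phi_x^{-1}(w))=(x,\widehat{\Phi}(x,w))$.

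First I would check that $\Psi$ is continuous, i.e.\ that $(x,t)\mapsto\Phi_x(t)$ is jointly continuous on $[0,1]\times[0,\tfrac12]$. On $[0,1]\times[0,\tfrac12)$ this is immediate from the formula $\Phi_x(t)=(1-\sin(\pi t))\bigl(1-\tfrac{x}{1-2t}\bigr)-1$, since the denominator $1-2t$ does not vanish there. The only delicate point — and the step I expect to be the main obstacle, though a mild one — is continuity at the points $(x_0,\tfrac12)$. There I would use the elementary inequality $1-\cos\theta\le\tfrac{\theta^2}{2}$ with $\theta=\pi(t-\tfrac12)$, which gives $0\le 1-\sin(\pi t)=1-\cos\bigl(\pi(t-\tfrac12)\bigr)\le\tfrac{\pi^2}{2}(t-\tfrac12)^2$, and hence
\[\Bigl|(1-\sin(\pi t))\,\tfrac{x}{1-2t}\Bigr|\le\frac{\pi^2}{2}\,(t-\tfrac12)^2\cdot\frac{x}{2\,|t-\tfrac12|}\le\frac{\pi^2}{4}\,\bigl|t-\tfrac12\bigr|,\]
uniformly for $x\in[0,1]$. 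Thus both $1-\sin(\pi t)\to0$ and $(1-\sin(\pi t))\tfrac{x}{1-2t}\to0$ as $t\to\tfrac12^-$, uniformly in $x$, so $\Phi_x(t)\to-1=\Phi_{x_0}(\tfrac12)$, and joint continuity at $(x_0,\tfrac12)$ follows.

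Once $\Psi$ is known to be a continuous bijection, the argument concludes formally: its domain $[0,1]\times[0,\tfrac12]$ is compact and its codomain $D\subseteq{\mathbb R}^2$ is Hausdorff, so $\Psi$ is a homeomorphism and $\Psi^{-1}:D\to[0,1]\times[0,\tfrac12]$ is continuous. Composing $\Psi^{-1}$ with the projection onto the second coordinate yields $(x,w)\mapsto\widehat{\Phi}(x,w)$, which is therefore continuous, as claimed.
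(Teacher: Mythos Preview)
Your proof is correct, and it takes a genuinely different route from the paper's. The paper argues directly at the level of sequences: given $(x_n,w_n)\to(x,w)$ in $D$ with $t_n=\widehat{\Phi}(x_n,w_n)\to s$, it shows $s=\widehat{\Phi}(x,w)$ by a three-case analysis according to whether $s\neq\tfrac12$, or $s=\tfrac12$ with infinitely many $t_n=\tfrac12$, or $s=\tfrac12$ with infinitely many $t_n\neq\tfrac12$; in each case it computes $\lim_n\Phi_{x_n}(t_n)$ and invokes the injectivity of $\Phi_x$, then concludes via compactness of $[0,\tfrac12]$. Your approach instead packages everything into the compact--Hausdorff homeomorphism theorem: you show the forward map $\Psi(x,t)=(x,\Phi_x(t))$ is a continuous bijection from the compact rectangle onto $D$, and read off continuity of $\widehat{\Phi}$ from that of $\Psi^{-1}$. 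This is cleaner and avoids the case split; the only analytic work is the joint continuity of $\Phi_x(t)$ at $t=\tfrac12$, which you handle with the quadratic bound $1-\sin(\pi t)\le\tfrac{\pi^2}{2}(t-\tfrac12)^2$, essentially the same estimate the paper needs in its Case~3 but deployed more economically. The paper's sequential argument, on the other hand, is more self-contained in that it does not appeal to the compact--Hausdorff theorem as a black box.
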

%%%%%%%%%%%%%%%%%%%%%%%%%%%%%%%%%%%%
\begin{proof}
Using Lemma~\ref{salam1},
$\widehat{\Phi}:\{(x,w)\in[0,1]\times[-1,0]:x+w\leq0\}\to[0,\frac12]$
is well-defined. Let $A:=\{(x,w)\in[0,1]\times[-1,0]:x+w\leq0\}$.
Consider $(x,w)\in A$, $s\in[0,\frac12]$, and sequence
$\{(x_n,w_n):n\in{\mathbb N}\}$ such that
${\displaystyle\lim_{n\to\infty}x_n}=x$,
${\displaystyle\lim_{n\to\infty}w_n}=w$,
${\displaystyle\lim_{n\to\infty}\widehat{\Phi}(x_n,w_n)}=s$.
Let $t=\widehat{\Phi}(x,w)$ and $t_n=\widehat{\Phi}(x_n,w_n)$ ($n\in\mathbb N$).
We show $s=t$, i.e.
${\displaystyle\lim_{n\to\infty}\widehat{\Phi}(x_n,w_n)}=\widehat{\Phi}(x,w)$.
\\
We have the following cases:
\\
{\it Case 1}.
$s\neq\frac12$.
In this case we may suppose for all $n\in\mathbb N$ we have
$t_n\neq\frac12$. For all $n\in\mathbb N$ we have
$w_n=\Phi_{x_n}(t_n)=(1-\sin(\pi t_n))(1-\frac{x_n}{1-2t_n})-1$
moreover:
\begin{eqnarray*}
\Phi_x(t) & = & w = {\displaystyle\lim_{n\to\infty}w_n}
        = {\displaystyle\lim_{n\to\infty}\Phi_{x_n}(t_n)} \\
    & = & {\displaystyle\lim_{n\to\infty}(1-\sin(\pi t_n))(1-\frac{x_n}{1-2t_n})-1} \\
    & = & (1-\sin(\pi s))(1-\frac{x}{1-2s})-1 =\Phi_x(s)
\end{eqnarray*}
and
$s=t$ since $\Phi_x$ is one to one according to Lemma~\ref{salam1}.
\\
{\it Case 2}. $s=\frac12$ and for infinitely many of $n$s we have $t_n=\frac12$.
In this case we may suppose for all $n\in\mathbb N$ we have $t_n=\frac12$.
Thus we have:
\begin{eqnarray*}
\Phi_x(t) & = & w = {\displaystyle\lim_{n\to\infty}w_n}
        = {\displaystyle\lim_{n\to\infty}\Phi_{x_n}(t_n)} \\
    & = & {\displaystyle\lim_{n\to\infty}\Phi_{x_n}(\frac12)}
        ={\displaystyle\lim_{n\to\infty}-1}=-1=\Phi_x(\frac12)
\end{eqnarray*}
and
$s=\frac12=t$ since $\Phi_x$ is one to one according to Lemma~\ref{salam1}.
\\
{\it Case 3}. $s=\frac12$ and for infinitely many of $n$s we have $t_n\neq\frac12$.
In this case we may suppose for all $n\in\mathbb N$ we have $t_n\neq\frac12$.
Thus we have:
\begin{eqnarray*}
\Phi_x(t) & = & w = {\displaystyle\lim_{n\to\infty}w_n}
        = {\displaystyle\lim_{n\to\infty}\Phi_{x_n}(t_n)} \\
    & = & {\displaystyle\lim_{n\to\infty}(1-\sin(\pi t_n))(1-\frac{x_n}{1-2t_n})-1} \\
    & = & {\displaystyle\lim_{n\to\infty}\frac{(1-\sin(\pi t_n))}{1-2t_n}
        \lim_{n\to\infty}(1-2t_n-x_n)}-1 \\
    & = & 0\times(1-s-x)-1=-1=\Phi_x(\frac12)
\end{eqnarray*}
and
$s=\frac12=t$ since $\Phi_x$ is one to one according to Lemma~\ref{salam1}.
\\
Using the above cases $s=t$ and
$\widehat{\Phi}:\{(x,w)\in[0,1]\times[-1,0]:x+w\leq0\}\to[0,\frac12]$
is continuous (otherwise since $[0,\frac12]$ is compact,
there exists $(x,w)\in A$ and sequence $\{(x_n,w_n):n\in\mathbb N\}$
converging to $(x,w)$ such that the sequence
$\{\widehat{\Phi}(x_n,w_n):n\in\mathbb N\}$ converges to a
point $s\in[0,\frac12]\setminus\{\widehat{\Phi}(x,w)\}$).
\end{proof}
%%%%%%%%%%%%%%%%%%%%%%%%%%%%%%%%%%%%
\begin{lemma}\label{salam5}
Consider $X=\{(x,y,z)\in{\mathbb R}^3:y^2+z^2=1,0\leq x\leq1\}$
and $\widehat{\Phi}$
as in Lemma~\ref{salam3}. Let
$M_1=\{(x,y,z)\in X:x+z\leq0\}$,
the map
$F_1:[0,1]\times M_1\to X$ with $F_1(\mu,(x,y,z))=(x',y',z')$ for:
\[\left\{\begin{array}{l} x'=x+(1-2(1-x)\widehat{\Phi}(x,z)-x)\mu \: , \\
    z'=(1-\mu)z-\mu \: , \\
    y'={\rm sgn}(y)\sqrt{1-z'^2} \: , 
    \end{array}\right.\]
is continuous.
\end{lemma}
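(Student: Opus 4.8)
The plan is to handle the three coordinate functions of $F_1$ one at a time; the only genuinely delicate point will be $y'$, because of the sign map, and everything else is a routine composition argument built on Lemma~\ref{salam3}.

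First I would check that the formula makes sense and that $F_1$ really lands in $X$. On $M_1$ one has $y^2+z^2=1$ and $x+z\le 0$ with $x\in[0,1]$, so $z\le -x\le 0$; combined with $z\ge -1$ this gives $z\in[-1,0]$ and shows $(x,z)$ lies in the domain $\{(x,w)\in[0,1]\times[-1,0]:x+w\le 0\}$ of $\widehat{\Phi}$ from Lemma~\ref{salam3}. Writing $z'=z-\mu(z+1)$ and using $z+1\ge 0$ and $\mu\in[0,1]$ yields $z'\in[-1,0]\subseteq[-1,1]$, so $1-z'^2\ge 0$ and $y'$ is defined, and one checks $y'^2+z'^2=1$. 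Since $\widehat{\Phi}(x,z)=\Phi_x^{-1}(z)\in[0,\frac12]$ by Lemma~\ref{salam1}, the identity $x'=(1-\mu)x+\mu\bigl(1-2(1-x)\widehat{\Phi}(x,z)\bigr)$ exhibits $x'$ as lying between $x$ and $1-2(1-x)\widehat{\Phi}(x,z)\in[x,1]$, hence $x'\in[0,1]$. So $F_1$ maps $[0,1]\times M_1$ into $X$.

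Next, continuity of $x'$ and $z'$. The projection $(x,y,z)\mapsto(x,z)$ is continuous from $M_1$ into the domain of $\widehat{\Phi}$, so by Lemma~\ref{salam3} the composite $(x,y,z)\mapsto\widehat{\Phi}(x,z)$ is continuous on $M_1$; then $x'$ is a continuous function of the continuous quantities $\mu,x,\widehat{\Phi}(x,z)$, and $z'$ is a polynomial in $\mu$ and $z$. Thus $(\mu,(x,y,z))\mapsto(x',z')$ is continuous. The factor $\sqrt{1-z'^2}$ is then continuous as well, since $z'\in[-1,1]$ keeps $1-z'^2\ge 0$.

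The only point needing care is $y'={\rm sgn}(y)\sqrt{1-z'^2}$ where $y=0$. I would split $[0,1]\times M_1$ into the open set $\{y\neq 0\}$ and its complement. On $\{y\neq 0\}$ the map ${\rm sgn}(y)$ is locally constant, hence continuous, and multiplying by the continuous function $\sqrt{1-z'^2}$ keeps $y'$ continuous. If instead $(x,y,z)\in M_1$ has $y=0$, then $z^2=1$ together with $z\le 0$ forces $z=-1$, whence $z'=(1-\mu)(-1)-\mu=-1$ for every $\mu$, so $1-z'^2=0$ and $y'={\rm sgn}(0)\cdot 0=0$. For continuity at such a point $(\mu_0,(x_0,0,-1))$, take any sequence $(\mu_n,(x_n,y_n,z_n))\to(\mu_0,(x_0,0,-1))$ in $[0,1]\times M_1$; then $z_n'\to -1$, so $\sqrt{1-z_n'^2}\to 0$, and since $|{\rm sgn}(y_n)|\le 1$ we get $|y_n'|\le\sqrt{1-z_n'^2}\to 0=y_0'$ (this suffices, $[0,1]\times M_1$ being metrizable). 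Hence $y'$, and therefore $F_1$, is continuous on all of $[0,1]\times M_1$. The main obstacle is exactly this discontinuity of the sign map, and the key observation that defeats it is that on $M_1$ the zero locus of $y$ is confined to $z=-1$, precisely where the homotopy collapses $z'$ to $-1$ and the factor $\sqrt{1-z'^2}$ vanishes identically in $\mu$, annihilating the bad factor.
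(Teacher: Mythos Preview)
Your proof is correct and follows essentially the same approach as the paper: verify that $F_1$ lands in $X$ by bounding $x'$ and $z'$, then invoke Lemma~\ref{salam3} for continuity. The paper's own proof simply writes ``Using Lemma~\ref{salam3}, $F_1$ is continuous'' without addressing the $\mathrm{sgn}$ factor; your additional argument that on $M_1$ the locus $\{y=0\}$ is exactly $\{z=-1\}$, where $\sqrt{1-z'^2}$ vanishes identically and absorbs the discontinuity of $\mathrm{sgn}$, fills a genuine omission in the paper's treatment.
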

%%%%%%%%%%%%%%%%%%%%%%%%%%%%%%%%%%%%
\begin{proof}
Let $(\mu,(x,y,z))\in[0,1]\times M_1$,
since $\widehat{\Phi}(x,z)\in[0,\frac12]$, we have
$1-2\widehat{\Phi}(x,z)\in[0,1]$ which leads to (use $x,\mu\in[0,1]$):
\begin{eqnarray*}
0\leq x(1-\mu)&=&x+(0-x)\mu\leq x'= x+(1-2(1-x)\widehat{\Phi}(x,z)-x)\mu \\
&\leq& x+(1-x)\mu\leq x+(1-x)=1
\end{eqnarray*}
thus $x'\in[0,1]$. Moreover using $\mu\in[0,1]$ and $z\in[-1,0]$ we have:
\[-1 =(1-\mu)(-1)-\mu\leq (1-\mu)z-\mu\leq(1-\mu)0-\mu=-\mu\leq0\: ,\]
thus $z'\in[-1,0]$ using $y'^2+z'^2=1$, $F_1:[0,1]\times M_1\to X$
is well-defined.
\\
Using Lemma~\ref{salam3}, $F_1:[0,1]\times M_1\to X$ is continuous.
\end{proof}
%%%%%%%%%%%%%%%%%%%%%%%%%%%%%%%%%%%%
\begin{lemma}\label{salam7}
For $x\in[0,1]$, the map
$\Psi_x:[0,\frac12]\to\{z\in[-1,1]:x+z\geq0\}=[-1,1]\cap[-x,+\infty)=[-x,1]$
with $\Psi_x(t)=\sin(\pi t)-(1+\sin(\pi t)-4t)x$
is a homeomorphism.
\end{lemma}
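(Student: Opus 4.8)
The plan is to argue exactly as in the proof of Lemma~\ref{salam1}: I would show that $\Psi_x$ is a continuous, strictly increasing bijection of $[0,\frac12]$ onto $[-x,1]$, and then conclude that it is a homeomorphism since $[0,\frac12]$ is compact and $[-x,1]$ is Hausdorff. First I would rewrite the formula in the more transparent form
\[\Psi_x(t)=(1-x)\sin(\pi t)+4xt-x\qquad(t\in[0,\tfrac12])\:,\]
which makes continuity immediate and yields the endpoint values $\Psi_x(0)=-x$ and $\Psi_x(\frac12)=(1-x)+x=1$ at once; I would also note that $[-x,1]=[-1,1]\cap[-x,+\infty)=\{z\in[-1,1]:x+z\geq0\}$, so the stated codomain is the correct one.

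Next I would establish strict monotonicity by computing $\Psi_x'(t)=(1-x)\pi\cos(\pi t)+4x$. On $[0,\frac12)$ we have $\cos(\pi t)>0$, so the first summand is $\geq0$, vanishing only if $x=1$, while the second summand is $\geq0$, vanishing only if $x=0$; these two conditions are mutually exclusive, hence $\Psi_x'(t)>0$ for every $t\in[0,\frac12)$ and $\Psi_x$ is strictly increasing on $[0,\frac12]$. (The two extreme cases can also be checked by hand: $\Psi_0(t)=\sin(\pi t)$ and $\Psi_1(t)=4t-1$ are visibly strictly increasing on $[0,\frac12]$.)

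Finally, by the intermediate value theorem together with strict monotonicity, $\Psi_x$ maps $[0,\frac12]$ bijectively onto $[\Psi_x(0),\Psi_x(\frac12)]=[-x,1]$, and a continuous bijection from a compact space onto a Hausdorff space is a homeomorphism, which is what is claimed. I do not expect a genuine obstacle here; the only point that warrants a moment's care is confirming that $\Psi_x'$ stays strictly positive on the half-open interval $[0,\frac12)$ — so that the vanishing of $\Psi_x'$ at $t=\frac12$ in the case $x=0$ does no harm — and this is precisely the incompatibility of the two degeneracy conditions observed above.
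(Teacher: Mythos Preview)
Your proof is correct and follows essentially the same route as the paper: compute the endpoint values $\Psi_x(0)=-x$, $\Psi_x(\tfrac12)=1$, invoke the intermediate value theorem for surjectivity, and conclude that a continuous bijection from a compact space to a Hausdorff space is a homeomorphism. The only difference is that you supply an explicit monotonicity argument via $\Psi_x'(t)=(1-x)\pi\cos(\pi t)+4x$ to secure injectivity, whereas the paper simply asserts bijectivity without spelling this out; your version is therefore a bit more complete.
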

%%%%%%%%%%%%%%%%%%%%%%%%%%%%%%%%%%%%
\begin{proof}
Suppose $z\in[-1,1]$ and $z+x\geq0$.
Since
$\Psi_x(0)=-x$ and $\Psi_x(\frac12)=1$ by
the mean value theorem there exists $t\in[0,\frac12]$ with
$\Psi_x(t)=z$. Thus
$\Psi_x:[0,\frac12]\to[-x,1]$
is a bijection continuous map
which completes the proof.
\end{proof}
%%%%%%%%%%%%%%%%%%%%%%%%%%%%%%%%%%%%
\begin{lemma}\label{salam9}
Using the same notations as in Lemma~\ref{salam7},
$\widehat{\Psi}:\{(x,w)\in[0,1]\times[-1,1]:x+w\geq0\}\to[0,\frac12]$
with $\widehat{\Psi}(x,w)=\Psi_x^{-1}(w)$ is continuous.
\end{lemma}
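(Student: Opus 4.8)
The plan is to mimic the proof of Lemma~\ref{salam3} almost verbatim, the key simplification being that the assignment $(x,t)\mapsto\Psi_x(t)=\sin(\pi t)-(1+\sin(\pi t)-4t)x$ is jointly continuous on $[0,1]\times[0,\tfrac12]$: there is no denominator vanishing as $t\to\tfrac12$, so the three-case analysis that was forced in Lemma~\ref{salam3} collapses to a single argument. First I would record that $\widehat{\Psi}$ is well-defined by Lemma~\ref{salam7}, since for $(x,w)\in A:=\{(x,w)\in[0,1]\times[-1,1]:x+w\geq0\}$ we have $w\in[-x,1]=\Psi_x([0,\tfrac12])$, so $\Psi_x^{-1}(w)$ is meaningful; and $\Psi_x$ is injective by the same lemma.

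Next, since the codomain $[0,\tfrac12]$ is compact, it suffices to verify sequential continuity. So I would fix $(x,w)\in A$ together with a sequence $\{(x_n,w_n):n\in\mathbb N\}\subseteq A$ converging to $(x,w)$, and, passing to a subsequence, assume $t_n:=\widehat{\Psi}(x_n,w_n)\to s$ for some $s\in[0,\tfrac12]$; put $t:=\widehat{\Psi}(x,w)$. From $w_n=\Psi_{x_n}(t_n)$ and joint continuity of $(x,t)\mapsto\Psi_x(t)$ one gets
\[
\Psi_x(t)=w=\lim_{n\to\infty}w_n=\lim_{n\to\infty}\Psi_{x_n}(t_n)=\Psi_x(s),
\]
and injectivity of $\Psi_x$ (Lemma~\ref{salam7}) forces $s=t$. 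Thus every convergent subsequence of $\{\widehat{\Psi}(x_n,w_n)\}$ has limit $\widehat{\Psi}(x,w)$.

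The standard compactness wrap-up then finishes the argument: were $\widehat{\Psi}$ discontinuous at some $(x,w)\in A$, there would be a sequence $(x_n,w_n)\to(x,w)$ in $A$ with $\widehat{\Psi}(x_n,w_n)$ bounded away from $\widehat{\Psi}(x,w)$; compactness of $[0,\tfrac12]$ produces a subsequence along which $\widehat{\Psi}(x_n,w_n)$ converges to some $s\neq\widehat{\Psi}(x,w)$, contradicting the previous paragraph. Hence $\widehat{\Psi}$ is continuous. I do not anticipate any genuine obstacle; the only point needing (trivial) care is the joint continuity of $\Psi_x(t)$ in both variables, which is immediate from the closed-form expression, so that — in contrast with Lemma~\ref{salam3} — no separate treatment of the endpoint $t=\tfrac12$ is required.
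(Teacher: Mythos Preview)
Your proof is correct and follows essentially the same route as the paper's: both argue via sequential continuity using compactness of $[0,\tfrac12]$, pass to a convergent subsequence $t_n\to s$, use the explicit (jointly continuous) formula $\Psi_x(t)=\sin(\pi t)-(1+\sin(\pi t)-4t)x$ to obtain $\Psi_x(t)=\Psi_x(s)$, and finish by injectivity of $\Psi_x$. Your observation that no case split at $t=\tfrac12$ is needed here (in contrast with Lemma~\ref{salam3}) is exactly right and matches the paper's single-case computation.
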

%%%%%%%%%%%%%%%%%%%%%%%%%%%%%%%%%%%%
\begin{proof}
Using Lemma~\ref{salam7},
$\widehat{\Psi}:\{(x,w)\in[0,1]\times[-1,1]:x+w\geq0\}\to[0,\frac12]$
is well-defined. Let $B:=\{(x,w)\in[0,1]\times[-1,1]:x+w\geq0\}$.
Consider $(x,w)\in B$, $s\in[0,\frac12]$, and sequence
$\{(x_n,w_n):n\in{\mathbb N}\}$ such that
${\displaystyle\lim_{n\to\infty}x_n}=x$,
${\displaystyle\lim_{n\to\infty}w_n}=w$,
${\displaystyle\lim_{n\to\infty}\widehat{\Psi}(x_n,w_n)}=s$.
Let $t=\widehat{\Psi}(x,w)$ and $t_n=\widehat{\Psi}(x_n,w_n)$ ($n\in\mathbb N$).
We have:
\begin{eqnarray*}
\Psi_x(t) & = & w= {\displaystyle\lim_{n\to\infty}w_n}
    ={\displaystyle\lim_{n\to\infty}\Psi_{x_n}(t_n)} \\
& = & {\displaystyle\lim_{n\to\infty}\sin(\pi t_n)-(1+\sin(\pi t_n)-4t_n)x_n} \\
& = & \sin(\pi s)-(1+\sin(\pi s)-4s)x=\Psi_x(s)
\end{eqnarray*}
and
$s=t$ since $\Psi_x$ is one to one according to Lemma~\ref{salam7}.
Using the above discussion and the compactness of $[0,\frac12]$,
$\widehat{\Psi}:\{(x,w)\in[0,1]\times[-1,1]:x+w\geq0\}\to[0,\frac12]$
is continuous.
\end{proof}
%%%%%%%%%%%%%%%%%%%%%%%%%%%%%%%%%%%%
\begin{lemma}\label{salam11}
Consider $X=\{(x,y,z)\in{\mathbb R}^3:y^2+z^2=1,0\leq x\leq1\}$
and $\widehat{\Psi}$
as in Lemma~\ref{salam9}. Let
$M_2=\{(x,y,z)\in X:x+z\geq0\}$,
the map
$F_2:[0,1]\times M_2\to X$ with $F_2(\mu,(x,y,z))=(x',y',z')$ for:
\[\left\{\begin{array}{l} x'=x+(1-x)\mu \: , \\
    z'=(1-\mu)z+(4\widehat{\Psi}(x,z)-1)\mu \: , \\
    y'={\rm sgn}(y)\sqrt{1-z'^2} \: ,
    \end{array}\right.\]
is continuous.
\end{lemma}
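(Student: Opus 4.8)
The plan is to mirror the proof of Lemma~\ref{salam5} essentially verbatim, with $\widehat{\Psi}$ (Lemma~\ref{salam9}) playing the role of $\widehat{\Phi}$ (Lemma~\ref{salam3}). First I would check that $F_2$ is well-defined, i.e.\ that $(x',y',z')\in X$ for every $(\mu,(x,y,z))\in[0,1]\times M_2$. For $(x,y,z)\in M_2$ the pair $(x,z)$ lies in the domain $\{(x,w)\in[0,1]\times[-1,1]:x+w\ge0\}$ of $\widehat{\Psi}$, so $\widehat{\Psi}(x,z)\in[0,\tfrac12]$ and hence $4\widehat{\Psi}(x,z)-1\in[-1,1]$. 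Then $x'=(1-\mu)x+\mu$ is a convex combination of $x\in[0,1]$ and $1$, so $x'\in[0,1]$, and $z'=(1-\mu)z+\mu\,(4\widehat{\Psi}(x,z)-1)$ is a convex combination of two numbers in $[-1,1]$, so $z'\in[-1,1]$; therefore $y'^2=1-z'^2\ge0$ makes sense and $y'^2+z'^2=1$, so $(x',y',z')\in X$.

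Next I would deduce continuity of the first two coordinates: the map $(\mu,(x,y,z))\mapsto(x,z)$ is continuous, $\widehat{\Psi}$ is continuous by Lemma~\ref{salam9}, and $x'$ and $z'$ are built from these by sums and products, hence are continuous on $[0,1]\times M_2$. At points where $y\neq0$ the sign of $y$ is locally constant, so $y'={\rm sgn}(y)\sqrt{1-z'^2}$ is locally a composition of continuous maps and there is nothing to prove.

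The hard part — exactly as in Lemma~\ref{salam5}, where it is folded silently into ``using Lemma~\ref{salam3}'' — is the continuity of $y'$ at points with $y=0$, where ${\rm sgn}$ jumps. The resolution is to show that on $M_2$ the condition $y=0$ forces $z'=\pm1$. If $(x,y,z)\in M_2$ and $y=0$ then $z=\pm1$. If $z=1$, then $\Psi_x(\tfrac12)=\sin(\tfrac{\pi}{2})-(1+\sin(\tfrac{\pi}{2})-2)x=1$ gives $\widehat{\Psi}(x,1)=\tfrac12$, so $z'=(1-\mu)+\mu\,(4\cdot\tfrac12-1)=1$. If $z=-1$, then $x+z\ge0$ together with $x\le1$ forces $x=1$, and $\Psi_1(t)=4t-1$ gives $\widehat{\Psi}(1,-1)=0$, so $z'=-(1-\mu)-\mu=-1$. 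In both cases $y'=\pm\sqrt{1-1}=0$. Since $z'$ is continuous, along any sequence $(\mu_n,(x_n,y_n,z_n))$ converging to such a point the values $z'_n$ converge to $\pm1$, hence $\sqrt{1-(z'_n)^2}\to0$ and therefore $y'_n\to0=y'$. This yields continuity of $y'$ at the bad points as well, and so $F_2:[0,1]\times M_2\to X$ is continuous. I expect this sign-pinching argument to be the only step requiring care; everything else is the routine bookkeeping already done for $F_1$ in Lemma~\ref{salam5}.
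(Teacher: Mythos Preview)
Your proposal is correct and follows exactly the paper's approach: the paper checks well-definedness by the same convex-combination bounds on $x'$ and $z'$, and then invokes Lemma~\ref{salam9} for continuity in a single line without isolating the ${\rm sgn}$ issue. Your extra paragraph handling the points with $y=0$ simply makes explicit what the paper (here and in Lemma~\ref{salam5}) leaves to the reader.
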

%%%%%%%%%%%%%%%%%%%%%%%%%%%%%%%%%%%%
\begin{proof}
Let $(\mu,(x,y,z))\in[0,1]\times M_2$,
since $x,\mu\in[0,1]$ we have $0\leq x\leq x+(1-x)\mu\leq x+(1-x)=1$ and $x'\in[0,1]$.
Since $\widehat{\Psi}(x,z)\in[0,\frac12]$ we have $1-4\widehat{\Psi}(x,z)\in[-1,1]$.
Now using $\mu\in[0,1]$ and $1-4\widehat{\Psi}(x,z),z\in[-1,1]$ we have
\[-1=(1-\mu)(-1)+(-1)\mu\leq(1-\mu)z+(4\widehat{\Psi}(x,z)-1)\mu\leq1-\mu+\mu=1\]
therefore $z'\in[-1,1]$ using $y'^2+z'^2=1$, $F_2:[0,1]\times M_2\to X$
is well-defined.
\\
Using Lemma~\ref{salam9}, $F_2:[0,1]\times M_2\to X$ is continuous.
\end{proof}
%%%%%%%%%%%%%%%%%%%%%%%%%%%%%%%%%%%%
\begin{construction}\label{construction1}
Consider $X=\{(x,y,z)\in{\mathbb R}^3:y^2+z^2=1,0\leq x\leq1\}$,
$Y=\{(x,y,z)\in X:x=1\vee z=-1\}$,
$M_1=\{(x,y,z)\in X:x+z\leq0\}$ and $M_2=\{(x,y,z)\in X:x+z\geq0\}$.
$F:[0,1]\times X\to X$ with $F\restriction_{M_1}=F_1$ as in Lemma~\ref{salam5}
and $F\restriction_{M_2}=F_2$ as in Lemma~\ref{salam11}. Then we have:
\begin{itemize}
\item[1.] $F:[0,1]\times X\to X$ is continuous.
\item[2.] For all $(x,y,z)\in X$ we have $F(0,(x,y,z))=(x,y,z)$ and $F(1,(x,y,z))\in Y$
\item[3.] For all $(x,y,z)\in Y$ and $\mu\in[0,1]$ we have $F(\mu,(x,y,z))=(x,y,z)$.
\end{itemize}
\end{construction}
%%%%%%%%%%%%%%%%%%%%%%%%%%%%%%%%%%%%
\begin{proof}
(1) For all $x\in[0,1]$ we have $\widehat{\Phi}(x,-x)=\widehat{\Psi}(x,-x)=0$,
so using Lemma~\ref{salam5}, Lemma~\ref{salam11} and gluing lemma
the map $F:[0,1]\times X\to X$ is continuous.
\\
(2) For $(x,y,z)\in X$, $F(0,(x,y,z))=(x,y,z)$
is clear by definition of $F_1$ and $F_2$.
Suppose $F(1,(x,y,z))=(x_1,y_1,z_1)$. If $(x,y,z)\in M_1$,
then $z_1=(1-1)z-1=-1$ and $F(1,(x,y,z))=(x_1,y_1,z_1)\in Y$.
If $(x,y,z)\in M_2$, then $x_1=x+(1-x)1=1$ and
$F(1,(x,y,z))=(x_1,y_1,z_1)\in Y$.
\\
(3) Suppose $(x,y,z)\in Y$, $\mu\in[0,1]$ and $F(\mu,(x,y,z))=(x',y',z')$. We have
the following cases:
\\
{\it Case 1}. $z=-1$. In this case $y=0$, $(x,y,z)\in M_1$
and $\widehat{\Phi}(x,z)=\widehat{\Phi}(x,-1)=\frac12$.
Thus $x'=x+(1-2(1-x)\frac12-x)\mu=x$, $z'=(1-\mu)(-1)-\mu=-1=z$
and $y'={\rm sgn}(y)\sqrt{1-z'^2}={\rm sgn}(y)\sqrt{1-1}=0=y$
\\
{\it Case 2}. $x=1$. In this case $y=0$,
$(x,y,z)\in M_2$ and $\widehat{\Psi}(x,z)=\widehat{\Psi}(1,z)=t$ implies
$z=\Psi_1(t)=4t-1$, i.e. $\widehat{\Psi}(1,z)=\frac{z+1}4$. Thus
$x'=1+(1-1)\mu=1=x$, $z'=(1-\mu)z+(4\times\frac{z+1}4-1)\mu=z$,
and $y'={\rm sgn}(y)\sqrt{1-z'^2}={\rm sgn}(y)\sqrt{1-z^2}={\rm sgn}(y)|y|=y$.
\\
Considering the above cases we are done.
\end{proof}
%%%%%%%%%%%%%%%%%%%%%%%%%%%%%%%%%%%%
\begin{construction}\label{construction2}
For $n\in\mathbb N$ let
\[X_n=\{(x,y,z)\in{\mathbb R}^3:y^2+(z-\frac1{n})^2=\frac1{n^2},0\leq x\leq\frac1n\}\:,\]
and $X_0=\bigcup\{X_n:n\in\mathbb N\}$, in this construction
we want to define a map $F_0:[0,1]\times X_0\to X_0$.
\\
Considering the same notations as in Construction~\ref{construction1} for $m\in\mathbb N$
and $(x,y,z)\in X_m$
we have $(mx,my,mz-1)\in X$. For $\mu\in[0,1]$ if
\[F(\mu,(mx,my,mz-1))=(x'_m,y'_m,z'_m)\in X\:,\]
then
$0\leq x'_m\leq1$ and $y'^2_m+z'^2_m=1$, thus $0\leq\frac{x'_m}{m}\leq\frac1m$ and
\[\left(\frac{y'_m}{m}\right)^2+\left(\frac{z'_m+1}{m}-\frac1{m}\right)^2=\frac1{m^2}\:,\]
therefore $(\frac{x'_m}{m},\frac{y'_m}{m},\frac{z'_m+1}{m})\in X_m$,
let
$F_m(\mu,(x,y,z))=(\frac{x'_m}{m},\frac{y'_m}{m},\frac{z'_m+1}{m})$,
i.e.
\[F_m(\mu,(x,y,z))=\frac1{m}F(\mu,(mx,my,mz-1))+(0,0,\frac1m)\:.\]
It is clear that $F_m:[0,1]\times X_m\to X_m$ is continuous. Suppose $s,t\in\mathbb N$,
$s<t$, $\mu\in[0,1]$ and
$(x,y,z)\in F_s\cap F_t$, then:
\[0\leq x\leq\min(\frac1t,\frac1s)\wedge y^2+(z-\frac1s)^2=\frac1{s^2}\wedge
y^2+(z-\frac1t)^2=\frac1{t^2}\]
which leads to $0\leq x\leq\frac1t(<\frac1s)$ and $y=z=0$.
Now, since $(sx,0,-1),(tx,0,-1)\in Y$ (in Construction~\ref{construction2}),
we have:
\[F(\mu,(sx,0,-1))=(sx,0,-1)\:,\:F(\mu,(tx,0,-1))=(sx,0,-1)\:,\]
and:
{\small \begin{eqnarray*}
F_s(\mu,(x,y,z))& = & F_s(\mu,(x,0,0))=\frac1{s}F(\mu,(sx,0,-1))+(0,0,\frac1s) \\
& = & \frac1s(sx,0,-1)+(0,0,\frac1s)=(x,0,0)=\frac1s(tx,0,-1)+(0,0,\frac1t) \\
& = & \frac1{t}F(\mu,(tx,0,-1))+(0,0,\frac1t)
    =F_t(\mu,(x,0,0))=F_t(\mu,(x,y,z))
\end{eqnarray*}}
Therefore for $F_0=\bigcup\{F_n:n\in\mathbb N\}$,
$F_0:[0,1]\times X_0\to X_0$ is well-defined.
\end{construction}
%%%%%%%%%%%%%%%%%%%%%%%%%%%%%%%%%%%%
\noindent Note: We recall that for $A\subseteq B$, we call $A$ a
deformation retract of $B$ if there exists a continuous map
$\nu:[0,1]\times B\to A$ with $\nu(0,b)=b$, $\nu(1,b)\in A$, and
$\nu(t, a)=a$ (for all $b\in B,a\in A,t\in[0,1]$). It is
well-known that if $A$ is a deformation retract of $B$ (and
$a_0\in A$), then
$\mathop{\Upsilon:\pi_1(A,a_0)\to\pi_1(B,a_0)}\limits_{\SP\SP[k]\mapsto[k]}$
is a group isomorphism, in particular $\pi_1(A)\cong\pi_1(B)$
\cite[Theorem 58.3]{M07}.
%%%%%%%%%%%%%%%%%%%%%%%%%%%%%%%%%%%%
\begin{lemma}\label{salam13}
For $n\in\mathbb N$ let
\[X_n=\{(x,y,z)\in{\mathbb R}^3:y^2+(z-\frac1{n})^2=\frac1{n^2},0\leq x\leq\frac1n\}\:,\]
and
\[Y_n=\{(x,y,z)\in X_n:x=\frac1{n}\vee z=0\}\:,\]
then $Y_0=\bigcup\{Y_n:n\in\mathbb N\}$
is a deformation retract of $X_0=\bigcup\{X_n:n\in\mathbb N\}$.
\end{lemma}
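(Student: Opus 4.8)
The plan is to exhibit an explicit deformation retraction of $X_0$ onto $Y_0$, namely the map $F_0\colon[0,1]\times X_0\to X_0$ already assembled in Construction~\ref{construction2}. Recall that $F_0$ is shown there to be well defined, with $F_0\restriction_{[0,1]\times X_n}=F_n$ and $F_n(\mu,(x,y,z))=\tfrac1n\,F\bigl(\mu,(nx,ny,nz-1)\bigr)+(0,0,\tfrac1n)$ for $(x,y,z)\in X_n$, where $F\colon[0,1]\times X\to X$ is the deformation retraction of $X$ onto $Y=\{(x,y,z)\in X:x=1\vee z=-1\}$ produced in Construction~\ref{construction1}. So it remains to verify the four defining properties of a deformation retraction: $F_0$ is continuous; $F_0(0,\cdot)=\mathrm{id}_{X_0}$; $F_0(1,X_0)\subseteq Y_0$; and $F_0(\mu,p)=p$ for all $p\in Y_0$ and $\mu\in[0,1]$. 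The last three will reduce, one cylinder at a time, to Construction~\ref{construction1}; continuity is the only real point.

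For the three algebraic properties, let $\sigma_n\colon X_n\to X$ be the affine bijection $\sigma_n(x,y,z)=(nx,ny,nz-1)$, so that $\sigma_n^{-1}(X,Y,Z)=\tfrac1n(X,Y,Z)+(0,0,\tfrac1n)$ and hence $F_n(\mu,\cdot)=\sigma_n^{-1}\circ F(\mu,\cdot)\circ\sigma_n$. A direct check gives $\sigma_n(Y_n)=Y$: the condition $x=\tfrac1n$ transforms to $nx=1$ and the condition $z=0$ transforms to $nz-1=-1$, and conversely. Then $F_0(0,p)=F_n(0,p)=\sigma_n^{-1}\bigl(F(0,\sigma_n(p))\bigr)=\sigma_n^{-1}(\sigma_n(p))=p$ by Construction~\ref{construction1}(2); for $p\in X_n$ we have $F(1,\sigma_n(p))\in Y$ by the same item, whence $F_0(1,p)\in\sigma_n^{-1}(Y)=Y_n\subseteq Y_0$; and for $p\in Y_n$ we have $\sigma_n(p)\in Y$, so $F(\mu,\sigma_n(p))=\sigma_n(p)$ by Construction~\ref{construction1}(3), giving $F_0(\mu,p)=p$. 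In particular $F_0(\mu,\mathbf 0)=\mathbf 0$ for the origin $\mathbf 0=(0,0,0)$, since $\mathbf 0\in Y_0$.

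The subtle point is continuity, because the closed cover $\{X_n:n\in\mathbb N\}$ of $X_0$ is \emph{not} locally finite at $\mathbf 0$, so the pasting lemma does not apply globally. Away from $\mathbf 0$ it does: since each $X_n$ contains $\mathbf 0$ and $\operatorname{diam}(X_n)\to 0$, any $p\in X_0\setminus\{\mathbf 0\}$ has a Euclidean neighbourhood $U$ with $U\cap X_0=\bigcup_{n\in S}(U\cap X_n)$ for a finite $S$, on which $F_0$ is the union of finitely many continuous maps $F_n$ on relatively closed sets; hence $F_0$ is continuous at every point of $[0,1]\times(X_0\setminus\{\mathbf 0\})$. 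For continuity at a point $(\mu_0,\mathbf 0)$, recall $F_0(\mu_0,\mathbf 0)=\mathbf 0$. Given $(\mu_k,p_k)\to(\mu_0,\mathbf 0)$ in $[0,1]\times X_0$, choose $n_k$ with $p_k\in X_{n_k}$; if along a subsequence $n_k\to\infty$ then $F_0(\mu_k,p_k)=F_{n_k}(\mu_k,p_k)\in X_{n_k}$, which tends to $\mathbf 0$ because $\mathbf 0\in X_{n_k}$ and $\operatorname{diam}(X_{n_k})\to 0$; if along a subsequence $n_k$ is constantly equal to $n$ then $F_n(\mu_k,p_k)\to F_n(\mu_0,\mathbf 0)=\mathbf 0$ by continuity of the single map $F_n$. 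Either way the limit is $\mathbf 0=F_0(\mu_0,\mathbf 0)$, and since this holds along every subsequence, $F_0$ is continuous at $(\mu_0,\mathbf 0)$.

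I expect the step at the origin to be the main obstacle: it is the only place where the infinitude of the family $\{X_n\}$ really bites, and it has to be handled by the metric/sequential argument just sketched rather than by a formal gluing. Once continuity of $F_0$ is in hand, the three algebraic conditions show that $F_0$ is a deformation retraction of $X_0$ onto $Y_0$, which is exactly the assertion of the lemma.
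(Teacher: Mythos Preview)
Your proof is correct and follows essentially the same approach as the paper: both use the map $F_0$ from Construction~\ref{construction2}, reduce the three retraction conditions to Construction~\ref{construction1} via the scaling between $X_n$ and $X$, and handle continuity by the gluing lemma away from the origin together with a diameter argument at the origin. The only cosmetic differences are your explicit use of the conjugation $\sigma_n$ and your sequential formulation of continuity at $\mathbf 0$ in place of the paper's $\varepsilon$--$\delta$ estimate.
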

%%%%%%%%%%%%%%%%%%%%%%%%%%%%%%%%%%%%
\begin{proof}
Consider
$F_0:[0,1]\times X_0\to X_0$ as in Construction~\ref{construction2}.
We prove the following claims:
\begin{itemize}
\item Claim 1. $F_0:[0,1]\times X_0\to X_0$ is continuous.
\item Claim 2. $\forall(x,y,z)\in X_0\SP(F_0(0,(x,y,z))=(x,y,z)\wedge F_0(1,(x,y,z))\in Y_0)$.
\item Claim 3. $\forall(x,y,z)\in Y_0\:\forall\mu\in[0,1]\SP F_0(\mu,(x,y,z))=(x,y,z)$.
\end{itemize}
{\it Proof of Claim 1}. Since for all $n\in\mathbb N$, $F_n:[0,1]\times X_n\to X_n$
is continuous, using the gluing lemma,
$\bigcup\{F_i:1\leq i\leq n\}:[0,1]\times\bigcup\{X_i:1\leq i\leq n\}
\to\bigcup\{X_i:1\leq i\leq n\}$ is continuous.
\\
If $(x,y,z)\in X_0\setminus\{(0,0,0)\}$, then there exist
$n\in\mathbb N$ and open neighborhood $V$ of $(x,y,z)$ in $X_0$
such that $V\subseteq\bigcup\{X_i:1\leq i\leq n\}$. Since
$\bigcup\{F_i:1\leq i\leq n\}\restriction_{[0,1]\times V}
:[0,1]\times\bigcup\{X_i:1\leq i\leq n\} \to\bigcup\{X_i:1\leq
i\leq n\}$ is continuous, $\bigcup\{F_i:1\leq i\leq
n\}:[0,1]\times V \to X_0$ is continuous, i.e.
$F_0\restriction_{[0,1]\times V}:[0,1]\times V \to X_0$ is
continuous, therefore $F_0$ is continuous in all points of
$[0,1]\times\{(x,y,z)\}$.
\\
In order to show the continuity of $F_0:[0,1]\times X_0\to X_0$,
we should prove that it is continuous in all points
$(\mu,(0,0,0))$ ($\mu\in[0,1]$). Consider $\varepsilon>0$ there exists $n\in\mathbb N$
such that $\frac{\sqrt6}n<\varepsilon$ for all $(x,y,z)\in X_0$ and $\mu,\lambda\in[0,1]$
we have (consider $[0,1]\times X_0$ and $X_0$ respectively
under Euclidean norm of ${\mathbb R}^4$ and ${\mathbb R}^3$):

$||(\mu,(0,0,0))-(\lambda,(x,y,z))||<\frac1n $
\begin{eqnarray*}
& \Rightarrow & x<\frac1n \\
& \Rightarrow & (x,y,z)\in\bigcup\{X_i:i\geq n\} \\
& \Rightarrow & F_0(\lambda,(x,y,z))=\bigcup\{F_i:i\geq n\}(\lambda,(x,y,z)) \\
& \Rightarrow & F_0(\lambda,(x,y,z))\in
    \bigcup\{F_i:i\geq n\}([0,1]\times\bigcup\{X_i:1\leq i\geq n\}) \\
& \Rightarrow & F_0(\lambda,(x,y,z))\in
    \bigcup\{F_i([0,1]\times X_i):i\geq n\}=\bigcup\{X_i:i\geq n\} \\
& \Rightarrow & ||F_0(\lambda,(x,y,z))||
    \leq\max\{\frac{\sqrt6}{i}:i\geq n\}=\frac{\sqrt6}{n} \\
& \Rightarrow & ||F_0(\lambda,(x,y,z))-F_0(\mu,(0,0,0))||=||F_0(\lambda,(x,y,z))||\leq
    \frac{\sqrt6}{n}<\varepsilon
\end{eqnarray*}
(note to the fact that $X_n\subseteq[0,\frac1n]\times[-\frac1n,\frac1n]\times[0,\frac2n]$,
thus for all $(u,v,w)\in X_n$ we have
$||(u,v,w)||\leq\sqrt{\frac1{n^2}+\frac1{n^2}+\frac4{n^2}}=\frac{\sqrt6}{n}$) therefore
$F_0:[0,1]\times X_0\to X_0$ is continuous in $(\mu,(0,0,0))$ as well as
it is continuous in other points of $[0,1]\times X_0$.
\\
{\it Proof of Claim 2}. Suppose $(x,y,z)\in X_0$, there exists $n\in\mathbb N$
such that $(x,y,z)\in X_n$, using Construction~\ref{construction1} (2), we have:
\begin{eqnarray*}
F_0(0,(x,y,z)) & = & F_n(0,(x,y,z))=\frac1{n}F(0,(nx,ny,nz-1))+(0,0,\frac1n) \\
& = & \frac1{n}(nx,ny,nz-1)+(0,0,\frac1n)=(x,y,z)
\end{eqnarray*}
and
$F_0(1,(x,y,z))=F_n(1,(x,y,z))=\frac1{n}F(1,(nx,ny,nz-1))+(0,0,\frac1n)$,
by Construction~\ref{construction1} (2)
we have $F(1,(nx,ny,nz-1))\in Y$
which leads to
$F_0(1,(x,y,z))\in\frac1{n}Y+(0,0,\frac1n)=Y_n\subseteq Y_0$.
\\
{\it Proof of Claim 3}. Suppose $\mu\in[0,1]$ and
$(x,y,z)\in X_0$, there exists $n\in\mathbb N$
such that $(x,y,z)\in Y_n\subseteq X_n$, now we have
(use Construction~\ref{construction1} (3)):
\begin{eqnarray*}
(x,y,z)\in Y_n & \Rightarrow &
    ((x,y,z)\in X_n\wedge x=\frac1n)\vee ((x,y,z)\in X_n\wedge z=0) \\
& \Rightarrow & (y^2+(z-\frac1n)^2=\frac1{n^2}\wedge x=\frac1n)
    \vee(0\leq x\leq\frac1n\wedge y=z=0) \\
& \Rightarrow & (((ny)^2+(nz-1)^2=1\wedge nx=1) \\
    & & \SP\SP\SP\SP\vee(0\leq nx\leq1\wedge ny=0\wedge nz-1=-1)) \\
& \Rightarrow & (nx,ny,nz-1)\in Y \\
& \Rightarrow & F(\mu,(nx,ny,nz-1))=(nx,ny,nz-1)
\end{eqnarray*}
thus
\begin{eqnarray*}
F_0(\mu,(x,y,z)) & = & F_n(\mu,(x,y,z))=\frac1{n}F(\mu,(nx,ny,nz-1))+(0,0,\frac1n) \\
    & = & \frac1{n}(nx,ny,nz-1)+(0,0,\frac1n)=(x,y,z)
\end{eqnarray*}
Which completes the proof of Claim 3.
\\
Using Claims 1, 2, and 3, $Y_0$ is a deformation retract of $X_0$.
\end{proof}
%%%%%%%%%%%%%%%%%%%%%%%%%%%%%%%%%%%%
\begin{theorem}\label{retract}
Under the same notations as in Construction~\ref{construction2} and 
\linebreak
Lemma~\ref{salam13},
$Z_0=\{(0,y,z):\exists x\:(x,y,z)\in X_0\}$ is a deformation retract
of $X_0$. In particular 
$\pi_1(Y_0)\cong\pi_1(X_0)\cong\pi_1(Z_0)$.
\end{theorem}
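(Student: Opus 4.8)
The plan is to build an explicit (strong) deformation retraction of $X_0$ onto $Z_0$ which is in fact considerably simpler than the one produced in Lemma~\ref{salam13}. The key observation is that the circle $\{(0,y,z):y^2+(z-\tfrac1n)^2=\tfrac1{n^2}\}$ is exactly the cross-section of the cylinder $X_n$ sitting over $x=0$; so, unlike the retraction onto $Y_0$ (which had to slide points along the lateral surface and therefore needed the auxiliary homeomorphisms $\widehat\Phi,\widehat\Psi$), here it suffices to push the $x$-coordinate down to $0$ while leaving $(y,z)$ fixed. Concretely I would set
\[
\nu:[0,1]\times X_0\to X_0,\qquad \nu(\mu,(x,y,z))=\bigl((1-\mu)x,\;y,\;z\bigr).
\]

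First I would check that $\nu$ is well defined, i.e.\ that it really maps into $X_0$: if $(x,y,z)\in X_0$, then $(x,y,z)\in X_n$ for some $n\in\mathbb N$, so $y^2+(z-\tfrac1n)^2=\tfrac1{n^2}$ and $0\le x\le\tfrac1n$; since $0\le(1-\mu)x\le x\le\tfrac1n$ and the $(y,z)$-part is untouched, $\bigl((1-\mu)x,y,z\bigr)\in X_n\subseteq X_0$. Continuity is then immediate, and this is the point that makes the present case so much lighter than Lemma~\ref{salam13}: the assignment $(\mu,(x,y,z))\mapsto\bigl((1-\mu)x,y,z\bigr)$ is a polynomial map $[0,1]\times\mathbb R^3\to\mathbb R^3$, hence continuous, and $\nu$ is merely its restriction to the subspace $[0,1]\times X_0$ with codomain cut down to $X_0$; no separate estimate at the singular point $(0,0,0)$ (where infinitely many cylinders meet) is needed. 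For the retraction properties: $\nu(0,(x,y,z))=(x,y,z)$; $\nu(1,(x,y,z))=(0,y,z)$, which lies in $Z_0$ because $(x,y,z)\in X_n$ forces $y^2+(z-\tfrac1n)^2=\tfrac1{n^2}$, so $(0,y,z)\in X_n\subseteq X_0$ and hence $(0,y,z)\in Z_0$ by definition of $Z_0$; and if $(0,y,z)\in Z_0$ then $\nu(\mu,(0,y,z))=(0,y,z)$ for every $\mu\in[0,1]$. Thus $Z_0$ is a deformation retract of $X_0$.

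For the final assertion I would invoke the standard fact recalled just before Lemma~\ref{salam13} (cf.\ \cite[Theorem 58.3]{M07}) that a deformation retract of a space has isomorphic fundamental group: applied to $Z_0\subseteq X_0$ it gives $\pi_1(Z_0)\cong\pi_1(X_0)$, while Lemma~\ref{salam13} already gives $\pi_1(Y_0)\cong\pi_1(X_0)$, so $\pi_1(Y_0)\cong\pi_1(X_0)\cong\pi_1(Z_0)$, all with respect to the common base point $(0,0,0)$. As for difficulty, there is essentially no obstacle here; the only thing to be slightly careful about is the membership check $(x,y,z)\in X_n\Rightarrow\bigl((1-\mu)x,y,z\bigr)\in X_n$, and one should resist re-running the patched, locally-finite continuity argument of Lemma~\ref{salam13}, since continuity of $\nu$ on all of $[0,1]\times X_0$ — including at $(0,0,0)$ — is inherited for free from a globally continuous map on $[0,1]\times\mathbb R^3$.
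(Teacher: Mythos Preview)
Your proof is correct and is exactly the approach the paper takes: the paper's own argument is the one-line ``the map $(\mu,(x,y,z))\mapsto((1-\mu)x,y,z)$ shows that $Z_0$ is a deformation retract of $X_0$,'' followed by the citation of \cite[Theorem 58.3]{M07}. You have simply spelled out in full the well-definedness and continuity checks that the paper leaves implicit.
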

%%%%%%%%%%%%%%%%%%%%%%%%%%%%%%%%%%%%
\begin{proof}
The map $\mathop{[0,1]\times X_0\to Z_0}\limits{(\mu,(x,y,z))\mapsto
((1-\mu)x,y,z)}$ shows that $Z_0$ is a deformation retract of $X_0$ too. Now use
\cite[Theorem 58.3]{M07} to complete the proof.
\end{proof}
%%%%%%%%%%%%%%%%%%%%%%%%%%%%%%%%%%%%
\begin{corollary}\label{retract10}
Two sets
    $\mathcal X$ and $\mathcal W$ are homeomorphic with deformation retracts of $\mathcal V$,
    therefore $\pi_1(\mathcal X)\cong\pi_1(\mathcal V)\cong\pi_1(\mathcal W)$.
\end{corollary}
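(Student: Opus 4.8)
The plan is to identify both $\mathcal X$ and $\mathcal W$, up to homeomorphism, with the two deformation retracts of $\mathcal V$ that have already been produced, and then to quote \cite[Theorem 58.3]{M07}. First I would observe that the space $\mathcal V$ of Convention~\ref{convention-asli} is, letter for letter, the space $X_0=\bigcup_n X_n$ of Construction~\ref{construction2}; so Lemma~\ref{salam13} says that $Y_0$ is a deformation retract of $\mathcal V$, and Theorem~\ref{retract} says that $Z_0$ is a deformation retract of $\mathcal V$. The identification $\mathcal X\cong Z_0$ is then immediate: by definition $Z_0=\bigcup_{n\in\mathbb N}\{(0,y,z)\in\mathbb R^3:y^2+(z-\tfrac1n)^2=\tfrac1{n^2}\}$ is simply the Hawaiian earring lying in the plane $\{x=0\}$, and $(0,y,z)\mapsto(y,z)$ restricts to a continuous bijection $Z_0\to\mathcal X$ with continuous inverse $(y,z)\mapsto(0,y,z)$. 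So the only real work is the homeomorphism $\mathcal W\cong Y_0$.

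To set that up I would first unwind $Y_0$. For each $n$ the set $Y_n=\{(x,y,z)\in X_n:x=\tfrac1n\vee z=0\}$ is the union of the circle $S_n:=\{(\tfrac1n,y,z):y^2+(z-\tfrac1n)^2=\tfrac1{n^2}\}$ of radius $\tfrac1n$ (the rim $\{x=\tfrac1n\}$ of the cylinder $X_n$) and the segment $I_n:=\{(x,0,0):0\le x\le\tfrac1n\}$, with $S_n\cap I_n=\{(\tfrac1n,0,0)\}$; since the $S_n$ lie in pairwise distinct planes $\{x=\tfrac1n\}$ and $I_1\supseteq I_2\supseteq\cdots$, the set $Y_0=\bigcup_n Y_n$ is the arc $A:=\{(x,0,0):0\le x\le1\}$ with, for each $n$, the circle $S_n$ wedged on at the point $(\tfrac1n,0,0)$, the circles being pairwise disjoint with radii tending to $0$. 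One checks that $\mathcal W$ has exactly the same description after the obvious rescaling of each circle --- the arc $[0,1]$ with a circle wedged on at the point $\tfrac1n$ for each $n$, the circles pairwise disjoint with diameters tending to $0$. I would then let $\Theta\colon Y_0\to\mathcal W$ be the identification $(x,0,0)\leftrightarrow x$ on $A$, together with, on each $S_n$, the evident rotation-and-rescaling taking $S_n$ onto the $n$-th circle of $\mathcal W$ and fixing the wedge point $\tfrac1n$. Then $\Theta$ is a bijection which is manifestly a homeomorphism on the arc and on each circle separately, and hence, by the gluing lemma applied to the finitely many pieces meeting a small neighbourhood, continuous and open at every point of $Y_0$ except possibly at the accumulation point $0$.

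The hard part will be exactly the continuity of $\Theta$ and of $\Theta^{-1}$ at $0$, and this is where the shrinking of the circles in both spaces is essential. The key point is that in each of $Y_0$ and $\mathcal W$ the $n$-th circle is wedged on at a point at distance $\tfrac1n$ from $0$ and has diameter at most $\tfrac2n$ (on the $Y_0$ side one may use $S_n\subseteq\{\tfrac1n\}\times[-\tfrac1n,\tfrac1n]\times[0,\tfrac2n]$, exactly as in the proof of Lemma~\ref{salam13}), so every open ball about $0$ contains all but finitely many circles entirely. Given an $\varepsilon$-neighbourhood $U$ of $0$ in $\mathcal W$, one then chooses $\delta>0$ so small that the $\delta$-ball about the origin in $Y_0$ meets only those circles $S_n$ that $\Theta$ sends into $U$ (together with a short initial piece of $A$), so that $\Theta$ maps that $\delta$-neighbourhood into $U$; the symmetric argument handles $\Theta^{-1}$. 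This makes $\Theta$ a homeomorphism, so $\mathcal W\cong Y_0$. Combining the three facts with \cite[Theorem 58.3]{M07} --- a deformation retract of a space has isomorphic fundamental group --- gives $\pi_1(\mathcal X)\cong\pi_1(Z_0)\cong\pi_1(\mathcal V)\cong\pi_1(Y_0)\cong\pi_1(\mathcal W)$, which is the assertion. Everything except the $\varepsilon$--$\delta$ check at $0$ is routine bookkeeping.
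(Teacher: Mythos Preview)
Your argument is exactly the paper's: identify $X_0=\mathcal V$, invoke Lemma~\ref{salam13} and Theorem~\ref{retract} for the two deformation retracts, and then exhibit the homeomorphisms $Z_0\cong\mathcal X$ and $Y_0\cong\mathcal W$ before quoting \cite[Theorem~58.3]{M07}. The paper's own proof simply asserts these two homeomorphisms without further comment, whereas you spell out the maps and the $\varepsilon$--$\delta$ check at the accumulation point; so your proposal is the same route, just with the details the paper leaves implicit.
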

%%%%%%%%%%%%%%%%%%%%%%%%%%%%%%%%%%%%
\begin{proof}
Under the same notations as in Theorem~\ref{retract},
    $\mathcal X$ and $Z_0$ are homeomorph, moreover $Y_0$ and $\mathcal W$
    are homeomorph too, also $X_0=\mathcal V$. Now by Theorem~\ref{retract} we have
    $\pi_1({\mathcal X})\cong\pi_1({\mathcal V})\cong\pi_1({\mathcal W})$.
\end{proof}
%%%%%%%%%%%%%%%%%%%%%%%%%%%%%%%%%%%%
\section{A distinguished counterexample}
\noindent In Section 11 we have proved $\pi_1(\mathcal X)\cong\pi_1(\mathcal W)$, in this section
we prove $\mathfrak P^\omega(\mathcal X)\ncong\mathfrak
P^\omega(\mathcal W)$.
%%%%%%%%%%%%%%%%%%%%%%%%%%%%%%%%%%%%
\begin{lemma}\label{tir10}
We have $|{\mathfrak P}^\omega(\mathcal X)|=\omega$.
\end{lemma}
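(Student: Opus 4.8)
The plan is to prove the two inequalities $\omega\le|\mathfrak P^\omega(\mathcal X)|$ and $|\mathfrak P^\omega(\mathcal X)|\le\omega$ separately. For the lower bound, let $f_1\colon[0,1]\to\mathcal X$ be the loop with $f_1[0,1]=C_1$, $f_1(0)=f_1(1)=0$, one to one on $(0,1)$. Since $|f_1^{-1}(x)|\le 2<\omega$ for every $x\in\mathcal X$, the map $f_1$ is an $\omega$-loop, so $[f_1]\in\mathfrak P^\omega(\mathcal X)$. Composing a loop $h$ with the retraction $\mathcal X\to C_1$ that collapses $\bigcup_{n\ge2}C_n$ to $0$ produces the loop $h^{\{1\}}$ of Note~\ref{Narges4}; as $f_1^{\{1\}}=f_1$ represents a generator of $\pi_1(C_1)\cong\mathbb Z$, the class $[f_1]$ has infinite order in $\pi_1(\mathcal X)$, hence $\langle[f_1]\rangle$ is an infinite cyclic subgroup of $\mathfrak P^\omega(\mathcal X)$ and $|\mathfrak P^\omega(\mathcal X)|\ge\omega$.

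For the upper bound, let $[\gamma]\in\mathfrak P^\omega(\mathcal X)$ be arbitrary. By Note~\ref{Narges2} there is an $\omega$-loop $k\colon[0,1]\to\mathcal X$ with base point $0$ and $[k]=[\gamma]$; being an $\omega$-loop, $k^{-1}(0)$ is finite, say $k^{-1}(0)=\{0=t_0<t_1<\cdots<t_m=1\}$. Each component $(t_{i-1},t_i)$ of $[0,1]\setminus k^{-1}(0)$ is connected and is mapped by $k$ into $\mathcal X\setminus\{0\}$, which is the topological disjoint union $\bigsqcup_{n\in\mathbb N}(C_n\setminus\{0\})$ (each $C_n\setminus\{0\}$ being open and closed in $\mathcal X\setminus\{0\}$), so there is $n_i\in\mathbb N$ with $k(t_{i-1},t_i)\subseteq C_{n_i}\setminus\{0\}$; together with $k(t_{i-1})=k(t_i)=0$ this makes the restriction $k_{[t_{i-1},t_i]}$ (in the notation of Lemma~\ref{lem}) a loop in $C_{n_i}$ based at $0$. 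Hence $[k]=[k_{[t_0,t_1]}]*\cdots*[k_{[t_{m-1},t_m]}]$, so $[\gamma]=[k]$ lies in $\iota_{S*}\bigl(\pi_1(\bigcup_{n\in S}C_n,0)\bigr)$, where $S=\{n_1,\dots,n_m\}$ is finite and $\iota_S\colon\bigcup_{n\in S}C_n\hookrightarrow\mathcal X$ is the inclusion.

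Consequently $\mathfrak P^\omega(\mathcal X)\subseteq\bigcup\bigl\{\iota_{S*}(\pi_1(\bigcup_{n\in S}C_n)):S\subseteq\mathbb N\text{ finite}\bigr\}$. For each finite $S$ the subspace $\bigcup_{n\in S}C_n$ is a wedge of $|S|$ circles, so $\pi_1(\bigcup_{n\in S}C_n)$ is a free group of finite rank, hence countable; since there are only countably many finite $S\subseteq\mathbb N$, the set $\mathfrak P^\omega(\mathcal X)$ is a countable union of countable sets, giving $|\mathfrak P^\omega(\mathcal X)|\le\omega$. Combined with the lower bound this yields $|\mathfrak P^\omega(\mathcal X)|=\omega$. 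The step that needs care is the decomposition of the $\omega$-loop $k$ at the (finitely many) points of $k^{-1}(0)$ together with the verification that each resulting piece lands inside a single punctured circle $C_{n_i}\setminus\{0\}$ and that $[k]$ is the corresponding $*$-product; once this is established the cardinality bound is immediate and the remainder is bookkeeping.
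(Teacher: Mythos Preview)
Your proof is correct and follows essentially the same approach as the paper: both exploit that an $\omega$-loop has finite preimage of $0$ so that its homotopy class lies in the free product $*\{\pi_1(C_n):n\in\mathbb N\}$, which is countable. Your upper-bound argument is in fact more explicit than the paper's (which simply cites Note~\ref{Narges4}(4) and asserts the containment), and for the lower bound you use the infinite order of a single $[f_1]$ where the paper uses the infinite family $\{[\rho_n]\}_{n\in\mathbb N}$---both are equally valid.
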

%%%%%%%%%%%%%%%%%%%%%%%%%%%%%%%%%%%%
\begin{proof}
For $n\in\mathbb N$ consider
    $\rho_n:[0,1]\to C_n$ with $\rho_n(t)=\frac1ne^{2\pi it-\frac{\pi i}2}+\frac{i}{n}$,
    then $\omega-$loops $\rho_n,\rho_m:[0,1]\to{\mathcal X}$ are homotopic if and only if $n=m$.
    Therefore $\{[\rho_n]:n\in\mathbb N\}$ is an infinite subset of ${\mathfrak P}^\omega(\mathcal X)$
    which leads to $|{\mathfrak P}^\omega(\mathcal X)|\geq\omega$.
    On the other hand as it has been mentioned in Note~\ref{Narges4} (4), if
    $[f]\in{\mathfrak P}^\omega(\mathcal X)$, then $|A(f)|<\omega$,
    which leads to
    ${\mathfrak P}^\omega(\mathcal X)\subseteq*\{\pi_1(C_n):n\in\mathbb N\}$, thus
    \begin{eqnarray*}
    |{\mathfrak P}^\omega(\mathcal X)| & \leq & |*\{\pi_1(C_n):n\in\mathbb N\}| \\
    & = & |\{\rho_{i_1}^{j_1}*\rho_{i_2}^{j_2}*\cdots*\rho_{i_m}^{j_m}:m\in\mathbb N,
        i_1,j_1,i_2,j_2,\ldots,i_m,j_m\in\mathbb Z\}| \\
    & \leq & |{\displaystyle\bigcup_{m\in\mathbb N}\{(i_1,j_1,\cdots,i_m,j_m):
        i_1,j_1,\ldots,i_m,j_m\in\mathbb Z\}}|=\omega
    \end{eqnarray*}
Hence
    $|{\mathfrak P}^\omega(\mathcal X)|=\omega$.
\end{proof}
%%%%%%%%%%%%%%%%%%%%%%%%%%%%%%%%%%%%
\begin{lemma}\label{tir20}
We have $|{\mathfrak P}^\omega(\mathcal W)|=c$.
\end{lemma}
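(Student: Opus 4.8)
The plan is to prove the two inequalities $|\mathfrak P^\omega(\mathcal W)|\le c$ and $|\mathfrak P^\omega(\mathcal W)|\ge c$ separately. The upper bound is the easy half: since $\mathfrak P^\omega(\mathcal W)\subseteq\pi_1(\mathcal W)$, it suffices to bound $|\pi_1(\mathcal W)|$, and every loop in $\mathcal W$ is a continuous map $[0,1]\to\mathcal W$ into a Hausdorff space, hence is determined by its restriction to the countable dense set $\mathbb Q\cap[0,1]$; as $|\mathcal W|=c$ there are at most $c^{\aleph_0}=c$ such maps, so $|\pi_1(\mathcal W)|\le c$.

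For the lower bound I would produce, for each $S\subseteq\mathbb N$, an $\omega$-loop $\gamma_S$ based at $0$, and show $S\mapsto[\gamma_S]$ is injective; since $|\mathcal P(\mathbb N)|=c$ and every $[\gamma_S]$ lies in $\mathfrak P^\omega(\mathcal W)$ by Definition~\ref{salam}, this gives $|\mathfrak P^\omega(\mathcal W)|\ge c$. By Corollary~\ref{retract10} it is harmless to work in the concrete model $Y_0=\bigcup_{n}Y_n$ of Lemma~\ref{salam13}: an arc $J=\{(t,0,0):t\in[0,1]\}$ carrying, at the points $b_n=(\tfrac1n,0,0)$, pairwise disjoint circles $D_n$, with $b_n\to 0$ (the base point) and $\operatorname{diam}D_n\to 0$. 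For $S=\varnothing$ set $\gamma_S$ to be the constant loop. For $S=\{n_1<n_2<\cdots\}\neq\varnothing$ let $\gamma_S$ run, in succession, along the subarc of $J$ from $0$ to $b_{n_1}$, once around $D_{n_1}$, along the subarc of $J$ from $b_{n_1}$ to $b_{n_2}$, once around $D_{n_2}$, along the subarc from $b_{n_2}$ to $b_{n_3}$, and so on; if $S$ is finite this closes with the subarc from $b_{n_{|S|}}$ back to $0$, and if $S$ is infinite I would reparametrise the $j$-th block ``once around $D_{n_j}$, then subarc to $b_{n_{j+1}}$'' onto $[1-2^{-j+1},1-2^{-j}]$ and set $\gamma_S(1)=0$; this is continuous at $1$ because the $j$-th block has image inside a shrinking neighbourhood of $0$ (using $b_{n_j}\to 0$ and $\operatorname{diam}D_{n_j}\to 0$).

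It then remains to check that $\gamma_S$ is an $\omega$-loop and that $S\mapsto[\gamma_S]$ is injective. For the first point: the base point $0$ is attained only at $t\in\{0,1\}$; a point of $D_n\setminus\{b_n\}$ is attained once if $n\in S$ and never otherwise; and a point of $J$ is attained only a bounded finite number of times, since the $J$-part of $\gamma_S$ consists of one monotone ``ascent'' $0\to b_{n_1}$ followed by the monotone descending runs $b_{n_1}\to b_{n_2}\to b_{n_3}\to\cdots$ (plus the closing run to $0$ when $S$ is finite), so a given point of $J$ lies in the ascent and in at most one (resp. two) descending run, with only a bounded extra count at those $b_{n_j}$ where the loop passes through $D_{n_j}$. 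For injectivity: collapsing the closed connected subspace $\mathcal W\setminus(D_n\setminus\{b_n\})$ to $b_n$ gives a retraction $r_n\colon\mathcal W\to D_n\cong\mathbb S^1$, and the induced map $(r_n)_\ast\colon\pi_1(\mathcal W,0)\to\pi_1(D_n,b_n)\cong\mathbb Z$ carries $[\gamma_S]$ to a generator $\pm 1$ when $n\in S$ (then $\gamma_S$ winds once around $D_n$) and to $0$ when $n\notin S$ (then $\gamma_S$ avoids $D_n\setminus\{b_n\}$); hence $[\gamma_S]=[\gamma_{S'}]$ forces $S=S'$, which completes the argument. I expect the only genuine obstacle to be the verification that $\gamma_S$, although it winds around infinitely many circles, never visits a single point infinitely often and extends continuously at $t=1$: this is precisely what fails for $\mathcal X$, where returning to the unique vertex between consecutive circles forces $|A(f)|\le|f^{-1}(0)|$ and hence $|\mathfrak P^\omega(\mathcal X)|=\omega$ (Lemma~\ref{tir10}); it works for $\mathcal W$ exactly because its circles hang from distinct points of an arc, so the connecting paths can be threaded monotonically toward $0$.
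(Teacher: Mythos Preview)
Your proposal is correct and follows essentially the same approach as the paper. The upper bound via the count of continuous maps $[0,1]\to\mathbb R^2$ is identical in spirit; for the lower bound the paper also builds, for each $a\in\{0,1\}^{\mathbb N}$, an explicit $\omega$-loop $f_a$ that runs along the arc and winds once around $D_n$ precisely when $a_n=1$, and proves injectivity by retracting onto a single circle (via Convention~\ref{good10}), exactly as you do with your $r_n$. The only cosmetic difference is that the paper's $f_a$ always traverses the whole arc $[0,1]$ outward (pausing at every $b_n$) and then returns straight back on $[\tfrac12,1]$, whereas your $\gamma_S$ goes out only as far as $b_{n_1}$ and, in the infinite case, never explicitly returns but limits to $0$; both variants give $\omega$-loops for the same reason you identify, namely that the connecting paths can be threaded monotonically along the arc.
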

%%%%%%%%%%%%%%%%%%%%%%%%%%%%%%%%%%%%
\begin{proof}
It is well-known that for all Hausdorff separable space $A$, $|C(A,\mathbb R^2)|\leq c$
where $C(A,B)$ denotes the collection of all continuous maps $\phi:A\to B$.
Therefore
\[|{\mathfrak P}^\omega(\mathcal W)|\leq
|C([0,1],\mathcal W)|\leq|C([0,1],\mathbb R^2)|=c\:.\]
Now for all $a=(a_n:n\in\mathbb N)\in\{0,1\}^{\mathbb N}$ define $f_a:[0,1]\to\mathcal W$ with:
\vspace{5mm}
\begin{center}\scalebox{0.8}{
   $f_a(x)=\left\{\begin{array}{lc}
        \dfrac{1}{2^{n+1}}e^{2\pi i(4n(n+1)x-(2n+1)+\frac34)}
        +\dfrac{i}{2^{n+1}}+\dfrac1n

        & \dfrac{2n+1}{4n(n+1)}\leq x\leq\dfrac1{2n},a_n=1,n\in{\mathbb N} \: , \\
   & \\
    4x-\dfrac1{n+1} &
        \dfrac{1}{2(n+1)}\leq x\leq\dfrac{2n+1}{4n(n+1)},a_n=1,n\in{\mathbb N} \: , \\
    & \\
    2x & \dfrac{1}{2(n+1)}\leq x\leq\dfrac1{2n},a_n=0,n\in{\mathbb N} \: , \\
    & \\
    0 & x=0 \: , \\
    & \\
    2-2x & \dfrac12\leq x\leq1 \: , 
    \end{array}\right.$}
    \end{center}
then $f_a:[0,1]\to\mathcal W$  is an $\omega-$loop, thus
$[f_a]\in{\mathfrak P}^\omega(\mathcal W)$. We claim that
$\psi:\{0,1\}^{\mathbb N}\to{\mathfrak P}^\omega(\mathcal X)$
with $\psi(a)=[f_a]$ ($a\in\{0,1\}^{\mathbb N}$) is one to one.
Let $a=(a_n:n\in\mathbb N),b=(b_n:n\in\mathbb N)\in\{0,1\}^{\mathbb N}$ and $a\neq b$,
then there exists $m\in\mathbb N$ such that $a_m\neq b_m$. Suppose $a_m=0$ and $b_m=1$.
Let $W:=\{\frac{1}{2^{m+1}}e^{2\pi i\theta}+\frac{1}{m}+\frac{i}{2^{m+1}}:
\theta\in[0,1]\}$. Since $f_a^W$ is constant map $\frac1m$, $[f_a^W]$ is
null-homotopic. However $[f_b^W]$ is not null-homotopic, thus
$[f_a^W]\neq[f_b^W]$ which leads to $[f_a]\neq[f_b]$ according to Convention~\ref{good10}.
Hence $\psi:\{0,1\}^{\mathbb N}\to{\mathfrak P}^\omega(\mathcal X)$ is one to one which
leads to $|{\mathfrak P}^\omega(\mathcal X)|\geq|\{0,1\}^{\mathbb N}|=c$ and
completes the proof.
\end{proof}
%%%%%%%%%%%%%%%%%%%%%%%%%%%%%%%%%%%%
\begin{counterexample}[A Distinguished Counterexample]\label{example5}
Two groups $\pi_1(\mathcal X)$ and $\pi_1(\mathcal W)$ are isomorphic and two groups
$\mathfrak P^\omega(\mathcal X)$ and $\mathfrak P^\omega(\mathcal W)$ are non-isomorphic.
Briefly $\pi_1(\mathcal X)\cong\pi_1(\mathcal W)$ and
$\mathfrak P^\omega(\mathcal X)\ncong\mathfrak P^\omega(\mathcal W)$
(use Lemma~\ref{tir10}, Lemma~\ref{tir20}, and Corollary~\ref{retract10}).
\end{counterexample}
%%%%%%%%%%%%%%%%%%%%%%%%%%%%%%%%%%%% Introduction
\section{A diagram and a hint}
\noindent Consider the following diagram:
\begin{center}
\scalebox{0.85}{
\begin{tabular}{c}$\xymatrix{
    \forall\alpha\geq\omega\:{\mathfrak P}^\alpha(X)
    \cong{\mathfrak P}^\alpha(Y)\ar@{->}[r]^{(I)}  &
    \forall\alpha\geq c\:{\mathfrak P}^\alpha(X)
    \cong{\mathfrak P}^\alpha(Y)\ar@{->}[r]^{(II)} &
    \forall\alpha\geq2^c\:{\mathfrak P}^\alpha(X)
    \cong{\mathfrak P}^\alpha(Y)\ar@{<->}[d]^{(III)} \\
\pi_1(X)\cong\pi_1(Y)\ar@{<->}[rr]_{(IV)}\ar@{->}[u]_{(V)}|{\nmid} &&
    \exists\alpha\geq 2^c\:{\mathfrak P}^\alpha(X)
    \cong{\mathfrak P}^\alpha(Y)}$\end{tabular}}
\end{center}
Arrows (III) and (IV) are valid regarding Theorem~\ref{Narges3}
(1). However by Counterexample~\ref{example5}, there exist $X, Y$
such that $\pi_1(X)\cong\pi_1(Y)$ and ${\mathfrak
P}^\omega(X)\cong{\mathfrak P}^\omega(Y)$, thus:
\[\pi_1(X)\cong\pi_1(Y)\wedge\neg(\forall\alpha\geq\omega\:{\mathfrak P}^\alpha(X)
    \cong{\mathfrak P}^\alpha(Y))\]
Hence the above diagram is valid. We have the following arising
problems:
%%%%%%%%%%%%%%%%%%%%%%%%%%%%%%%%%%%%
\begin{problem}
Find a counterexample for arrow (I), i.e.
find $X$, $Y$ such that $\pi_1(X)\cong\pi_1(Y)$, ${\mathfrak P}^c(X)\cong{\mathfrak P}^c(Y)$
and ${\mathfrak P}^\omega(X)\ncong{\mathfrak P}^\omega(Y)$ (Hint:
is it true that
${\mathfrak P}^c(\mathcal X )\cong{\mathfrak P}^c(\mathcal W )$).
\end{problem}
%%%%%%%%%%%%%%%%%%%%%%%%%%%%%%%%%%%%
\begin{problem}
Find a counterexample for arrow (II), i.e.
find $X$, $Y$ such that $\pi(X)_1\cong\pi_1(Y)$ and ${\mathfrak P}^c(X)\ncong{\mathfrak P}^c(Y)$.
\end{problem}
%%%%%%%%%%%%%%%%%%%%%%%%%%%%%%%%%%%%
\section{A Strategy for Future and Conjecture}
%%%%%%%%%%%%%%%%%%%%%%%%%%%%%%%%%%%%
\noindent Let's extend of the idea of this text to homotopy group of order $n$.
Let $b\in \mathbb S^n$ be a fixed point. For infinite cardinal
number $\alpha$ and ideal $\mathcal I$ on $X$ which contains all
finite subsets of $X$, if $f,g:\mathbb S^n\to X$ are
$\alpha\frac{\mathcal I}{}$maps, with $f(b)=g(b)$, then it is easy
to see that $f\vee
g:\mathbb S^n\to X$ is $\alpha\frac{\mathcal I}{}$map too.
So we may have the following definition.
%%%%%%%%%%%%%%%%%%%%%%%%%%%%%%%%%%%%
\begin{definition}
For $a\in X$, by $\mathfrak P_{(n,\mathcal I)}^\alpha(X,a)$ we mean subgroup of
$\pi_n(X,a)$ generated by $\alpha\frac{\mathcal I}{}$maps with base point
$a$.
\end{definition}
%%%%%%%%%%%%%%%%%%%%%%%%%%%%%%%%%%%%
\noindent It's evident by the definition that for
ideals ${\mathcal I}$, ${\mathcal J}$ on $X$
containing finite subsets,
transfinite cardinal number $\alpha$, and
$a\in X$, we have:
\begin{itemize}
\item If ${\mathcal I}\subseteq{\mathcal J}$, then
    $\mathfrak P_{(n,\mathcal I)}^\alpha(X,a)\subseteq\mathfrak
        P_{(n,\mathcal J)}^\alpha(X,a)$;
\item $\mathfrak P_{(n,\mathcal I\cap \mathcal J)}^\alpha(X,a)
    \subseteq \mathfrak P_{(n,\mathcal I)}^\alpha(X,a)
    \cap\mathfrak P_{(n,\mathcal    J)}^\alpha(X,a)$.
\end{itemize}
Now we are ready to the following conjecture:
%%%%%%%%%%%%%%%%%%%%%%%%%%%%%%%%%%%%
\\
{\bf Conjecture.} Arc connected spaces $X$ and $Y$ are homeomorph if and only if
there exists a bijection $f:X\to Y$ such that for all nonzero cardinal number
$\alpha$ and all ideal $\mathcal I$ on $X$, $\mathfrak P_{\mathcal I}^\alpha(X)
\cong\mathfrak P_{f({\mathcal I})}^\alpha(Y)$.
%%%%%%%%%%%%%%%%%%%%%%%%%%%%%%%%%%%%
\\
{\bf One more idea for future study.} Let's recall that in topological space $Z$ and $a,b\in Z$ for nonzero cardinal number $\beta$,
a collection $\Gamma$ of maps $f:[0,1]\to Z$ with $f(0)=a$ and $f(1)=b$, is called a $\beta-$separated
family of maps between $a$ and $b$ if for all distinct $g,h\in \Gamma$ we have
$|(g[0,1]\cap h[0,1])\setminus\{a,b\}|<\beta$~\cite[Definition~2.5]{AH16}. 
\\
Now for cardinal numbers
$\alpha,\beta>0$ and ideal $\mathcal I$ on $X$ we may
consider the collection $S(\mathcal{I},\alpha,\beta)$ consisting of all families $\Gamma$
such that $\Gamma$ is a collection of $\alpha\frac{\mathcal I}{\:}$loops with base point $a$
and a $\beta-$separated family of maps between $a$ and $a$. Suppose
\[\mathfrak{S}(\mathcal{I},\alpha,\beta):=\{<\{[f]:f\in\Gamma\}>:\Gamma\in S(\mathcal{I},\alpha,\beta)\}\:,\]
then $\mathfrak{S}(\mathcal{I},\alpha,\beta)$ is a ``poset'' under $\subseteq$ and a collection of subgroups of $\pi_1(X)$. For arc connected
spaces $X$ and $Y$ one may compare these ``type'' of collections of their fundamental groups to discover ``differents'' beween $X$ and $Y$.
%%%%%%%%%%%%%%%%%%%%%%%%%%%%%%%%%%%% Introduction
\section{Conclusion}
\noindent In this paper, for arc connected locally compact
Hausdorff topological space $X$ (with at least two elements),
$a\in X$, nonzero cardinal number $\alpha$, and ideal $\mathcal
I$ on $X$ we introduce ${\mathfrak P}_{\mathcal I}^\alpha(X,a)$
as a subgroup of $\pi_1(X,a)$. We prove that for transfinite
$\alpha$ and $a,b\in X$ two groups ${\mathfrak P}_{\mathcal
I}^\alpha(X,a)$ and ${\mathfrak P}_{\mathcal I}^\alpha(X,b)$ are
isomorphic, therefore for transfinite $\alpha$ we denote
${\mathfrak P}_{\mathcal I}^\alpha(X,a)$ simply by ${\mathfrak
P}_{\mathcal I}^\alpha(X)$ and ${\mathfrak
P}_{\{\varnothing\}}^\alpha(X)$ simply by ${\mathfrak
P}^\alpha(X)$. Moreover for $\alpha\geq2^c$ we have ${\mathfrak
P}_{\mathcal I}^\alpha(X)=\pi_1(X)$, hence the most interest is
in $\omega\leq\alpha<2^c$ using GCH we prefer to study
$\alpha\in\{\omega,c\}$. We obtain that for Hawaiian earring
(infinite earring) $\mathcal X$, three groups ${\mathfrak
P}^\omega(\mathcal X)$, ${\mathfrak P}^c(\mathcal X)$, and
${\mathfrak P}^{2^c}(\mathcal X)(=\pi_1(\mathcal X))$ are
pairwise distinct. Also we introduce $\mathcal Y$ such that
${\mathfrak P}_{{\mathcal P}_{fin}({\mathcal Y})}^\omega(\mathcal
Y)$, ${\mathfrak P}_{{\mathcal P}_{fin}({\mathcal Y})}^c(\mathcal
Y)$, and ${\mathfrak P}_{{\mathcal P}_{fin}({\mathcal
Y})}^{2^c}(\mathcal Y) (=\pi_1(\mathcal Y))$ are pairwise
distinct. We find $\mathcal W$ such that $\pi_1(\mathcal
X)\cong\pi_1(\mathcal W)$ and ${\mathfrak P}^\omega(\mathcal
X)\ncong{\mathfrak P}^\omega(\mathcal W)$, this example leads us
to the fact that we can classify spaces with isomorphic first
homotopy groups using the concept of ${\mathfrak P}^\alpha(-)$s
(\textit{first homotopy groups with respect to
$\alpha\geq\omega$}). However investigating the structure of our
examples and specially Section 12, shows remarkable role of the
number of (locally) cut points their and order in $\alpha-$arcs,
$\alpha\frac{\mathcal I}{}$arcs, and our constructed subgroups of
first fundamental group.
%%%%%%%%%%%%%%%%%%%%%%%%%%%%%%%%%%%%
\section*{Acknowledgement}
\noindent With special thanks to our friends
A. Hosseini, P. Mirzaei, and M. Nayeri for their helps and comments.
%%%%%%%%%%%%%%%%%%%%%%%%%%%%%%%%%%%% thebibliography

\vspace{5mm}
\[\underline{\SP\SP\SP\SP\SP\SP\SP\SP\SP\SP\SP\SP\SP\SP\SP\SP}\]
{\small
{\bf Fatemah Ayatollah Zadeh Shirazi},
Faculty of Mathematics, Statistics and Computer Science,
College of Science, University of Tehran ,
Enghelab Ave., Tehran, Iran
\\
({\it e-mail}: fatemah@khayam.ut.ac.ir)
\\
{\bf Fatemeh Ebrahimifar},
Faculty of Mathematics, Statistics and Computer Science,
College of Science, University of Tehran ,
Enghelab Ave., Tehran, Iran
\\
({\it e-mail}: ebrahimifar64@ut.ac.ir)
\\
{\bf Mohammad Ali Mahmoodi},
Faculty of Mathematics, Statistics and Computer Science,
College of Science, University of Tehran ,
Enghelab Ave., Tehran, Iran
\\
({\it e-mail}: m.a.mahmoodi@ut.ac.ir)}

\begin{thebibliography}{AA}

\bibitem{A07} Ayatollah Zadeh Shirazi, F., {\it Linear connectivity},
    Advanced Studies in Contemporary Mathematics, 2007 ({\bf 14}, 2) 317--323.

\bibitem{AH16} Ayatollah Zadeh Shirazi, F.; Hosseini, A., {\it On $(\alpha,\beta)-$linear connectivity}, 
	Iranian Journal of Mathematical Sciences and Informatics, 2016 ({\bf Vol 11}, no 1) 85--100.

\bibitem{Ho99} Holz, M.; Steffens, K.; Weitz, E.,
    {\it Introduction to cardinal arithmetic}, Birkh$\ddot{a}$user-Verlag, 1999.

\bibitem{M84} Munkres, J. R.,
    {\it Elements of algebraic topology},
    Addison-Wesley Publishing Company, 1984.

\bibitem{M07} Munkres, J. R.,
    {\it Topology},
    Prentice-Hall of India (2nd. Ed.), 2007.

\bibitem{Rudin} Rudin, W., {\it Principles of mathematical analysis},
    International Series in Pure and Applied Mathematics, McGraw-Hill Book Co.
    (3rd. Ed.), 1976.

\end{thebibliography}
\end{document}